\documentclass[a4paper,8pt]{article}
\usepackage{geometry}
\usepackage{wrapfig}
\usepackage{lastpage}
\usepackage[T1]{fontenc}
\usepackage{geometry}
\usepackage[pdftex]{graphicx}
\usepackage{amsthm}
\usepackage{amssymb}
\usepackage{amsmath}
\usepackage[T1,T2A]{fontenc}
\usepackage[utf8]{inputenc}
\usepackage[russian,main=english]{babel}
\usepackage{setspace}
\usepackage{esint}
\usepackage{comment}
\usepackage{amsfonts}
\usepackage{fullpage}
\usepackage{parskip}
\usepackage{indentfirst}
\usepackage{caption}
\usepackage{subcaption}
\usepackage{listings}
\usepackage{xcolor}
\usepackage{hyperref}
\usepackage{float}
\usepackage{booktabs}
\usepackage{arcs}
\usepackage{cmap}
\usepackage{mathtools}

\usepackage{enumitem}
\newcommand{\subscript}[2]{$#1 _ #2$}
\hypersetup{colorlinks,
linkcolor = red, citecolor=blue}
\newcommand{\ov}{\overline}
\newcommand{\pr}{{\bf{P}}}
\newcommand{\bfx}{{\bf{x}}}

\newcommand{\ex}{{\bf{E}}}

\newcommand{\prf}{{\bf{P}_f}}
\newcommand{\exf}{{\bf{E}_f}}
\newcommand{\dist}{\mathrm{dist}}
\newcommand{\bol}{\boldsymbol}
\newcommand{\ab}{\indent}
\newcommand{\RomanNumeralCaps}[1]
{\MakeUppercase{\romannumeral #1}}
\addto\captionsrussian{}
\theoremstyle{plain}
\newtheorem{thmd}{Theorem}
\newtheorem{lemma}{Lemma}
\newtheorem{prop}{Proposition}

\newtheorem{cons}{Corollary}
\theoremstyle{definition}
\newtheorem{deff}{Definition}
\theoremstyle{remark}
	
\newtheorem{remark}{Remark}
\newtheorem{example}{Example}
\everydisplay{\thickmuskip5mu}
\usepackage{thmtools}
\declaretheoremstyle[
notefont=\bfseries,
notebraces={}{},
postheadspace=0pt,
headpunct={},
headformat=\NAME\NOTE,
name = Condition,
style = remark
]{nopar}
\declaretheorem[style=nopar]{cond1}[]

\begin{document}
\thispagestyle{empty}
\sloppy
\pagestyle{plain}
\bibliographystyle{unsrt}
{\textbf{\large{Non-exctinction probability for two branching processes in a joint random environment}}}
 \begin{center}
Dmitrii Arapov, Lomonosov Moscow State University
\\ dmitrii.arapov@math.msu.ru
\end{center}
\section{Abstract}
The paper introduces the model of a pair of branching processes $\{\bol{Z_n} = \left(Z_n^{(1)}, Z_n^{(2)}\right), \; n \in \mathbb{N}_0\}$ in a joint random environment. If the environment is fixed then the sequences $\{Z_n^{(1)}, \; n \in \mathbb{N}_0\}$ and $\{Z_n^{(2)},\; n \in \mathbb{N}_0\}$ are independent branching processes in a varying environment. This model is a particular case of a more general model of a multitype branching process in a random environment. We establish the asymptotic relation $\pr\left(Z_n^{(1)} > 0, Z_n ^{(2)}>0 \right) \sim C n^{-a}$ as $n \to \infty$, where the parameter $a$ depends only on the correlation coefficient $\rho$ of an increment of a two-dimensional associated random walk. 
\section{Introduction}
Let $$\Pi^{(j)} = \left\{p_n^{(j)}\right\}_{n \in \mathbb{N}_0}, \quad f^{(j)}(s) = \sum_{n = 0}^{\infty} p_n^{(j)} s^n, \, j = 1, 2, \ldots, N,$$ where random variables $p_1^{(j)}, p_2^{(j)}, \ldots$ satisfy the relations
$$ p_n^{(j)} \ge 0, \; n \in \mathbb{N}_{0}, \quad \sum_{n = 0}^{\infty} p_n^{(j)} = 1 \text{ a.s.}, \; j = 1, \ldots, N.$$
Let $\bol{\Pi} = \left\{\Pi_1, \Pi_2, \ldots \right\}$, where $\Pi_m = \left( \Pi_m^{(1)}, \ldots, \Pi_m^{(N)}\right)$ is an independent copy of $\left( \Pi^{(1)}, \ldots, \Pi^{(N)}\right)$ for every $m \in \mathbb{N}$. We also define
\begin{equation}
\label{rand_envi}
\bol{f} = \left\{ \left( f_1^{(1)}, \ldots, f_1^{(N)}\right), \left( f_2^{(1)}, \ldots, f_2^{(N)}\right), \ldots \right\},
\end{equation}
where for every $m \in \mathbb{N}$ and $j = 1, \ldots, N$ $f_m^{(j)}$ is the generating function of the sequence $\Pi_m^{(j)}$. The sequence $\bol{f}$ is called a random environment.
The stochastic process $$\left\{\bol{Z_n} = \left(Z_n^{(1)}, \ldots, Z_n^{(N)}\right)\right\}_{n \in \mathbb{N}_0}, \,\,\, Z_0^{(j)} = 1, \; j = 1, \ldots, N,$$ is said to be the group of N branching process in the random environment $\bol{f}$ if in the case of the given $\bol{f}$ the sequences $\left\{Z_n^{(j)}, n \in \mathbb{N}_0\right\}$, $j = 1, \ldots, N$ are independent branching process in the varying environment. The function $f_n^{(j)}$ is the offspring generating function for the $n$-th generation and $j$-th population particle.
\\ \ab A random walk with the increments
$$\ov{X}_i = \left( \ln \left({f}_i^{(1)}\right)'(1), \ldots, \ln \left(f_i^{(N)}\right)'(1) \right)$$
can be associated with the branching process $\bol{Z_n} = \left( Z_n^{(1)}, \ldots, Z_n^{(N)}\right)$.
The process $\{\bol{Z_n}\}$ is said to be critical if the equality $\ex \ov{X}_1 = 0$ holds.
\\ \ab Given N = 1, the stochastic process $\{Z_n\}$ is a branching process in a random environment (BPRE). The model of BPRE was first introduced by Smith and Wilkinson in 1969 in paper \cite{smith}. Note that $\prf(Z_n >0)$ in the case of linear fractional  generating functions $\{f_n\}$ can be explicitly expressed in terms of the associated random walk (see \cite{agr}, page 40).
\\ \ab
The group of N branching processes $\{Z_n\}$ is a particular case of a general multitype model. In a classical multitype model, particles of one type can produce particles of other types. Critical multitype processes were considered in paper \cite{multitype}. In \cite{multitype} the probability of non-extinction of at least one type of particles was studied. This general result is proved under strong assumptions. Our conditions are weaker.
\\ \ab The asymptotic behavior of the probability of non-extinction of critical BPRE in a particular case of linear fractional  generating functions $\{f_n\}$ was first studied by M. V. Kozlov in 1976 in \cite{Kozlov}. After a quarter of a century G. Kersting and I. Geiger obtained the asymptotics for the general case (see \cite{Geiger}). To be more precise, the following asymptotic relation was obtained:
\begin{equation}
\label{1D}
\pr\left(Z_n > 0\right) \sim C \pr \left( S_j > 0, \, j \le n\right) \sim \frac{C\widehat{C}}{\sqrt{n}}, \;\;\; n \to \infty,
\end{equation}
where $C$ and $\widehat{C}$ are some constants. Note that the explicit expression for $\widehat{C}$ can be found for instance in \cite{VPSS}.
\\ \ab Given $N = 2$, the process $\{\bol{Z_n}\}$ will be referred to as a pair of branching processes in a joint random environment (PBPJRE). It is the model that we are mostly interested in. Relation (\ref{1D}) makes us think that the asymptotic behavior of the probability of non-extinction of PBPJRE is connected with the asymptotic behavior of the probability of positivity of the associated random walk. It should be pointed out that the problems concerning the positivity of random walks were investigated in detail by D. Denisov and V. Wachtel in \cite{Den}, \cite{DenRe}. These results are described in Section \ref{to_stay_positive}.
\\ The main result of the present paper is a theorem about non-extinction of critical PBPJRE. Under certain assumptions the following asymptotic relation can be stated:
\begin{equation}
\label{main_equivalent}
\pr\left(Z_n^{(1)} > 0, Z_n^{(2)} > 0\right) \sim \frac{C}{n^{p/2}}, \; n \to \infty,\quad p = \frac{\pi}{ \arccos{(-\rho})},
\end{equation}
where $\rho$ is the correlation coefficient of the increment.
\\ \ab While preparing of the present work we discovered preprint \cite{Pre}, which contains asymptotic relation (\ref{main_equivalent}) for a particular case of a linear fractional  $\{f_k^j\}$. Note that in \cite{Pre} rigid and apparently excessive conditions are imposed. However, the common ideas of this work and preprint \cite{Pre} are quite close. Moreover, some of the results of \cite{Pre} are more general than our results. We will not use the methods and ideas of \cite{Pre}.
\\ \ab
Note that we consider $N = 2$ because in this case the asymptotic relation can be explicitly described. In particular, in this case the parameter p can be found.
\\ \ab This paper is organized as follows. The main result of the present paper is formulated in Section \ref{main_result_section}. Some results from \cite{Den}, \cite{DenRe} devoted to the positivity of multidimensional random walks are presented in Section \ref{to_stay_positive}. Section \ref{Plus_Measure} contains $\pr^{+}$ measure construction, where $\pr^{+}$ is an important tool for investigating the associated random walk behavior. Section \ref{proof_main_nogeom} is devoted to the proof of Theorem \ref{main_nogeom}. Proofs of all auxiliary propositions and lemmas are contained in Section \ref{vspomog}.
\section{Main result}
\label{main_result_section}
We should define the model of a group of branching processes more formally.
\begin{deff}
A stochastic process
$$\left\{\bol{Z_n} = \left(Z_n^{(1)}, \ldots, Z_n^{(N)} \right), \, n \in \mathbb{N}_0 \right\}, \quad Z_0^{(j)} = 1, \; j = 1, \ldots, N$$
is said to be the group of $N$ branching processes in a joint random environment $\bol{f}$ if for every natural number $n$ the following equalities hold:
$$\left. \exf \left( \prod\limits_{j = 1}^{N} t_j^{Z_n^{(j)}}\right|\bol{Z_{n-1}}\right) :=\left. \ex \left( \prod\limits_{j = 1}^{N} t_j^{Z_n^{(j)}}\right|\bol{f}, \bol{Z_{n-1}}\right) = \prod \limits_{j = 1}^N \left(f_n^{(j)} \right)^{Z_{n-1}^{(j)}} \left( t_j\right).$$
\end{deff}
The main result of the present paper is the following theorem about PBPJRE non-extinction.
\begin{thmd}
\label{main_nogeom}
Let $\bol{Z_n} = \left(Z_n^{(1)}, Z_n^{(2)}\right)$ be a critical PBPJRE. Let the sequence of random vectors $\{\ov{S}_n\}$ be the associated random walk with increments $\ov{X}_i = \left(X_i^{(1)}, X_i^{(2)}\right)$, $i \in \mathbb{N}$. Let
\begin{equation}
\label{xidef}
\xi_k^{(l)} := \frac{\left(f_{k+1}^{(l)}\right)''(1)}{\left(\left(f_{k+1}^{(l)}\right)'(1)\right)^2}, \quad l = 1, 2; \; k \in \mathbb{N}_0.
\end{equation}
Suppose that $\ex \left|\ov{X}_{i}\right|^2 \ln \left(1 + \left|\ov{X}_{i}\right|\right) < \infty$, $\mathrm{corr}\left(X_i^{(1)}, X_i^{(2)}\right) = \rho \in (0, 1)$, there exists $q > 0$ such that $\ex \left(\xi_1^{(1)}\right)^q < \infty$,
$\ex \left(\xi_1^{(2)}\right)^q < \infty$. Then 
\begin{equation}
\label{main_nq}
\pr\left(Z_n^{(1)} > 0, Z_n^{(2)} > 0\right) \sim \frac{C}{n^{p/2}}, \; n \to \infty,\quad p = \frac{\pi}{ \arccos{(-\rho})}.
\end{equation}
\end{thmd}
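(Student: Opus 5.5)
We follow the one-dimensional Geiger--Kersting strategy: reduce the non-extinction probability to the probability that the two-dimensional associated walk stays in the positive quadrant, and pass to a conditioned measure $\pr^{+}$.

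\textbf{Step 1: decomposition under fixed environment.} For a fixed environment the two coordinates are independent, so
$$\pr\left(Z_n^{(1)}>0, Z_n^{(2)}>0\right) = \ex\left[\prf\left(Z_n^{(1)}>0\right)\prf\left(Z_n^{(2)}>0\right)\right].$$
Each factor satisfies the standard Agresti--type bounds. On one side,
$$\prf\left(Z_n^{(l)}>0\right)\le \min_{k\le n} e^{S_k^{(l)}}.$$
On the other side, with $\eta_k^{(l)}$ built from $\xi_k^{(l)}$,
$$\prf\left(Z_n^{(l)}>0\right)^{-1}\le e^{-S_n^{(l)}}+\sum_{k<n}\eta_k^{(l)}e^{-S_k^{(l)}}.$$
Hence the main contribution comes from environments in which both coordinates of $\ov{S}_k$ stay bounded below for $k\le n$. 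Let $\tau_x=\min\{k: \ov{S}_k\notin x+\mathbb{R}_+^2\}$.

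\textbf{Step 2: cone asymptotics.} Apply the Denisov--Wachtel results of Section \ref{to_stay_positive}. Normalize the increment covariance to the identity by a linear map. This sends the quadrant to a cone of opening angle $\arccos(-\rho)$. For $\rho>0$ the cone is convex, so the exit exponent is $p=\pi/\arccos(-\rho)$. The moment condition $\ex|\ov{X}|^2\ln(1+|\ov{X}|)<\infty$ is exactly what Denisov--Wachtel require. They give
$$\pr(\tau_x>n)\sim \varkappa V(x)\, n^{-p/2},$$
where $V$ is a positive harmonic function for the killed walk. They also give a local/conditional limit theorem: conditioned on $\tau_x>n$, the walk $\ov{S}_n/\sqrt n$ converges to a Rayleigh-type law in the cone. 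Using $V$, define $\pr^{+}$ as in Section \ref{Plus_Measure} by the $h$-transform
$$d\pr^{+}|_{\mathcal{F}_n}=V(\ov{S}_n)\mathbf{1}_{\{\tau>n\}}/V(0)\,d\pr .$$

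\textbf{Step 3: limit under $\pr^{+}$.} Prove a Geiger--Kersting-type lemma: for every bounded adapted $Y_k\in\mathcal{F}_k$ (with environment),
$$\ex\left[Y_k\mid \tau>n\right]\to \ex^{+}Y_k .$$
Then show that under $\pr^{+}$ both coordinates tend to $+\infty$ at least polynomially. This uses transience of the conditioned walk, e.g. $\min_l S_k^{(l)}\ge k^{1/2-\varepsilon}$ eventually. Combined with $\ex(\xi^{(l)})^q<\infty$ and Borel--Cantelli, this gives
$$\sum_k \eta_k^{(l)}e^{-S_k^{(l)}}<\infty \quad \pr^{+}\text{-a.s.}$$
Consequently $\prf(Z_n^{(l)}>0)\to \prf(Z_\infty^{(l)}>0)>0$ under $\pr^{+}$, and
$$\ex\left[\prf\left(Z_n^{(1)}>0\right)\prf\left(Z_n^{(2)}>0\right);\tau_x>n\right]\sim \pr(\tau_x>n)\,\ex^{+}_x[\cdots].$$
This yields $C n^{-p/2}$ after letting $x\to\infty$, with $C$ the limit of the corresponding constants (monotone in $x$).

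\textbf{Step 4: remainder, the main obstacle.} It remains to show that paths leaving the shifted quadrant $x+\mathbb{R}_+^2$ contribute $o(n^{-p/2})$ uniformly as $x\to\infty$. Split according to which coordinate first goes below $-x$ and at which time $k$. The upper bound $\prf\le e^{\min_l \min_{j\le n} S_j^{(l)}}$ gives a factor $e^{-x}$, but only for one coordinate. The other coordinate must still be controlled by a cone-type bound $\pr(\tau_{y}>n-k)\le CV(y)(n-k)^{-p/2}$ for the remaining time, applied after the Markov property. Summing over $k$ and the depth of the minimum requires $\ex V(\ov{S}_k+\cdot)$-type moment estimates. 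The delicate region is when one coordinate has dipped deep (so its factor is tiny) while the other walk only needs to stay in a half-plane, whose probability $n^{-1/2}$ is larger than $n^{-p/2}$ since $p>1$. I would therefore first try to show that a coordinate dipping to level $-y$ costs $e^{-y}$. Summing over the time of the dip, the subsequent survival requires the walk to escape to the quadrant, giving $\sum_y e^{-y}V(y,\cdot)$ convergent since $V$ grows polynomially. I expect this uniform remainder estimate to be the hardest part.
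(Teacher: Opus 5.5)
Your overall architecture matches the paper's: split on $\{\tau(\ov{t})>n\}$, handle the main part through $\pr^{+}_{\bfx}$, and let $t\to\infty$ via Lemma \ref{mainlemma}. However, you put the difficulty in the wrong place, and the steps that carry the real weight are not supplied.

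\textbf{Step 4 is the easy step.} Your time decomposition (first exit time $k$, Markov property at $k$, cone bound for the remaining $n-k$ steps) is what produces the spurious half-plane problem. Decompose by depth instead. With $\ov{t}=(t,t)$,
\[
\{\tau(\ov{t})\le n\}=\bigsqcup_{m\ge 1}\{\tau(m\ov{t})\le n<\tau((m+1)\ov{t})\}.
\]
On the $m$-th layer some coordinate has reached $-mt$, and the other minimum is $\le S_0=0$. Hence $\prf(\ov{Z}_n>0)\le \exp(\min_{j\le n}S_j^{(1)}+\min_{j\le n}S_j^{(2)})\le e^{-mt}$. At the same time the \emph{whole} path stays in the quadrant shifted by $(m+1)\ov{t}$. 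So the uniform bound $\pr(\tau(\ov{y})>n)\le Cu(M\ov{y}+\ov{x}_0)n^{-p/2}\le C'|\ov{y}|^pn^{-p/2}$ applies with $\ov{y}=(m+1)\ov{t}$. Summing gives
\[
\pr(\ov{Z}_n>0,\tau(\ov{t})\le n)\le Ct^pn^{-p/2}\sum_{m\ge1}(m+1)^pe^{-mt}=O(t^pe^{-t})\,n^{-p/2},
\]
uniformly in $n$; this is Lemma \ref{smallNuneq}. The deeply dipped coordinate is still bounded below by $-(m+1)t$, so no half-plane estimate ever enters. Your last sentence, $\sum_y e^{-y}V(y,\cdot)$, is exactly this argument, and it needs no Markov decomposition.

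\textbf{Step 3 is where the work is.} You treat it as routine, but there are three gaps.

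\emph{First}, you state the conditioning lemma for fixed $k$. To get $\ex[Y_n\mid\tau(\ov{x})>n]\to\ex^{+}_{\bfx}Y_\infty$ for $Y_n=\prf(\ov{Z}_n>0)$, you must also show $\lim_k\varlimsup_n\ex[|Y_n-Y_k|\mid\tau(\ov{x})>n]=0$. Theorem \ref{th_plus} does this as follows:
\begin{enumerate}
\item it conditions on $\tau(\ov{x})>qn$;
\item it uses boundedness of $u/V$ far from $\partial MK$ to pass to $\ex^{+}_{\bfx}|Y_n-Y_k|$;
\item it kills the region near $\partial K$ with the one-dimensional bound $\pr(S^{(i)}_j>-T,\ j\le(q-1)n)\to0$;
\item it lets $q\downarrow1$.
\end{enumerate}

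\emph{Second}, ``transience of the conditioned walk'' gives no rate, so the claim $\min_l S_k^{(l)}\ge k^{1/2-\varepsilon}$ eventually is unsupported. What is needed is a quantitative $\pr^{+}_{\bfx}$-a.s.\ separation of $\ov{S}_k+\ov{x}$ from $\partial K$. The paper proves only $\dist(\ov{S}_j+\ov{x},\partial K)>\alpha\ln j$ eventually (Corollary \ref{granitsa_pr_plus}), which suffices. Even this costs Lemmas \ref{comp}, \ref{lemmaU} and \ref{dimGr}:
\begin{enumerate}
\item localization $\ov{S}_T+\ov{x}\in\sqrt{T}D$, via optional stopping for $|\ov{x}+\ov{S}_n|^2-n\ex|\ov{X}|^2$ and $p<2$;
\item staying at distance $T^{1/2-s}$ from the boundary afterwards, via $V\sim u$ and continuity of $g_1$;
\item a union bound over $j$ using (\ref{Kn_uneq}).
\end{enumerate}

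\emph{Third}, Borel--Cantelli must be run under $\pr^{+}_{\bfx}$. There the law of $\xi_k$ is tilted by $V(M(\ov{x}+\ov{S}_{k+1}))$, and $\ov{X}_{k+1}$ comes from the same $f_{k+1}$ as $\xi_k$. Lemma \ref{bor_xi} handles this by H\"older's inequality; this is where $q$ and $p<2$ enter, through $\beta>2/(q(2-p))$.

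Only with all three pieces do you get $\sum_k\xi^{(i)}_ke^{-S^{(i)}_k}<\infty$ $\pr^{+}_{\bfx}$-a.s., and hence $r(\ov{x})>0$.
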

\section{Positivity of multidimensional random walks}
\label{to_stay_positive}
In this Section we will discuss some results from papers \cite{Den} and \cite{DenRe} required for the proof of Theorem \ref{main_nogeom}. First let us introduce some notation.
\subsection{Notation}
We denote the unit sphere in $\mathbb{R}^{d}$ as $\mathbb{S}^{d-1}$.
Throughout the following we use letters $C$, $\widehat{C}$, $\widetilde{C}$ with or without indices to denote constants.
The Euclidean norm of a vector $\ov{v} \in \mathbb{R}^d$ is denoted by $|\ov{v}|$. In other words, if $\ov{v} \in \mathbb{R}^d$, then
$$|\ov{v}| = \left(\sum_{j = 1}^{d}v_j^{2}\right)^{1/2}.$$
For convenience we use the notation $\ov{v} > C$, where $C \in \mathbb{R}$, $\ov{v} \in \mathbb{R}^d$, in the case if inequality $v_i > C$ holds for all $i =1, 2, \ldots, d$. Also by $\ov{w} > \ov{v}$, where $\ov{w}, \ov{v} \in \mathbb{R}^{d}$, we mean that $w_i > v_i$ for all $i =1, 2, \ldots, d$. We use the similar notation for the symbols $<, \ge, \le$.
We use the following notation for the minimum:
$$a \wedge b := \min\left(a, b\right), \quad a, b \in \mathbb{R}.$$
In this paper both ways of writing $\ov{v}$ and ${\bf{v}}$ for the vectors are valid.
By $\|\cdot\|$ throughout the following we mean the operator norm induced by the Euclidean norm, i.e. $\|A\| = \|A\|_2 = \sqrt{\lambda_{max}\left(A^{T} A\right)}$, where $\lambda_{max}(B)$ is the maximum eigenvalue of the matrix B.
\subsection{The Dirichlet problem for Laplace-Beltrami operator}
\begin{deff}
\label{cone_definition}
A set $K \subset \mathbb{R}^d$ is said to be an cone if there exists an open and connected set $\Sigma \subset \mathbb{S}^{d-1}$ such that $K$ is the union of the rays emitted from the origin and passing through the points of the set $\Sigma$.
\end{deff}
\label{Dirichet}
Let $\Sigma \subset \mathbb{S}^{d-1}$ be the set from the previous definition. Let us consider the Dirichlet problem for the Laplace-Beltrami operator on the unit sphere
\begin{align*}
\begin{cases}
\ L_{\mathbb{S}^{d-1}} g\left(\ov{x}\right) = -\lambda g(\ov{x}), &\ov{x} \in \Sigma,
\\
\ g(\ov{x}) = 0, &\ov{x} \in \partial \Sigma.
\\
\end{cases}
\end{align*}
Note that the spectrum is at most countable, discrete, and separated from zero (see \cite{DenRe}).
More precisely, the following chain of inequalities holds:
$$0 < \lambda_1 < \lambda_2 \le \lambda_3 \le \ldots$$
Let
\begin{equation}
\label{pDef}
p:= \sqrt{\lambda_1 +(d/2 - 1)^2} - (d/2 - 1).
\end{equation}
Let $g_1$ be an eigenfunction corresponding to the eigenvalue $\lambda_1$. Let also
\begin{equation}
\label{uDef}
u(\ov{x}) = \left|\ov{x}\right|^{p} g_1 \left( \frac{\ov{x}}{|x|}\right).
\end{equation}
Note that due to the continuity hence boundedness of the function $g_1$ there exists a parameter $C$ such that for every $\ov{x} \in K$ 
\begin{equation}
\label{simpU}
u(\ov{x}) \le C \left|\ov{x}\right|^p.
\end{equation}
We are interested in the following particular case.
\begin{example}
\label{MK_example}
Given $d = 2$, $\Sigma$ is the arc $[-\varphi, \pi/2 + \varphi]$ of the unit circle, where $\varphi \in (0, \pi/4)$.
In this case the parameter $p$ defined by relation (\ref{pDef}) has the following form
$$p = \frac{\pi}{\arccos(-\rho)}, $$
where $\rho = \sin 2\varphi$.
Whence the function 
\begin{align}
\label{def_g1}
g_1(t) =
\begin{cases}
\ (-1)^k \cos {\left(4kt\right)}, &\varphi = -\pi/4 + \pi /(8k), \; k \in \mathbb{N},
\\
\ \sin{\left(p(\varphi + t)\right)}/\cos{\left(p\varphi\right)}, &\text{otherwise}.
\\
\end{cases}
\end{align}
is the eigenfunction corresponds to $\lambda_1$.
\begin{remark}
\label{msign}
Note that the following relations hold
$g_1(t) \ge 0$ if $ t \in \left[-\varphi, \pi/2 + \varphi\right]$ and $g_1(t) = 0$ if and only if $t = -\varphi$ or $t = \pi/2 + \varphi$.
\end{remark}
\end{example}
\subsection{Asymptotic relation for the probability of positivity of random walks with non-correlated components of the increments}
\label{DenSubsection}
Let $\left\{\ov{S}_n\right\}$ be a random walk in $\mathbb{R}^{d}$ with increments $\ov{X}_i = \left(X_i^{(1)}, \ldots, X_i^{(d)} \right)$, $d\in \mathbb{N}$. Let $K \subset \mathbb{R}^d$ be a cone, $\ov{x} \in K$. Let also $\tau(\ov{x})$ be the exit time of a random walk from the cone $K$, i.e.
\begin{equation}
\label{tauDef}
\tau(\ov{x}) = \inf \left\{n \in \mathbb{N}\, :\, \ov{x} + \ov{S}_n \notin K\right\}.
\end{equation}
Throughout the following for every natural number $n$ and $\ov{x} \in K$ we will use notation
\begin{equation}
\label{mn_def}
m_{n}(\ov{x}):= \pr\left( \tau(\ov{x}) > n\right).
\end{equation}
V. Wachtel and D. Denisov in paper \cite{DenRe} impose two group of the assumptions.
\begin{cond1}[{\bf{(G)}}:]
\mbox{}
\label{GCond} 
\begin{enumerate}[label=\subscript{\bf{G}}{{\arabic*}}, labelsep = 0pt]
\item \label{G1}) the cone $K$ is Lipschitz and starlike, i.e. there exists the point $O \in K$ such that for every $A \in K$ the segment $[O, A]$ is the subset of the K;
\item \label{G2}) there exists a constant $C_1$ such that for every vector $\ov{x} \in K$ the following inequality holds
\begin{equation}
\label{ucUP}
u(\ov{x}) \le C_1 \left|\ov{x}\right|^{p - 1} \mathrm{dist} \left( \ov{x}, \partial K \right);
\end{equation}
\item) there exists a constant $C_2$ such that for every vector $\ov{x} \in K$ the following inequality holds
\begin{equation}
\label{ucDO}
u(\ov{x}) \ge C_2 |\ov{x}|^{p - 1} \mathrm{dist} \left( \ov{x}, \partial K \right).
\end{equation} \label{G3}
\end{enumerate}
\end{cond1}
\begin{cond1}[\bf{(M)}:]
\mbox{}
\label{MCond}
\begin{enumerate}[label=\subscript{\bf{M}}{{\arabic*}}, labelsep = 0pt]
\item) $\ex X^{(i)} = 0, \, i = 1, \ldots, d;$ \label{M1}
\item) $\ex \left(X^{(i)}\right)^2 = 1, \, i = 1, \ldots, d$ and $\ex X^{(i)} X^{(j)} = 0$ for $i \neq j$; \label{M2}
\item) $\ex \left|\ov{X}\right|^2 \ln \left(1 + \left|\ov{X}\right|\right) < \infty$. In the case $p>2$ we additionally assume that $\ex \left|\ov{X}\right|^{p} < \infty$. \label{M3}
\end{enumerate}
\end{cond1}
Under the assumptions (G) and (M) the following theorems hold.
\begin{thmd}[\cite{DenRe}]
\label{den1}
Let the assumptions (G) and (M) hold. Then the function
\begin{equation*}
\label{V}
\widehat{V}\left(\ov{x}\right):= \lim_{n \to \infty} \ex u\left(\ov{x} + \ov{S}_n\right) I\left\{ \tau(\ov{x}) > n\right\}
\end{equation*}
is well-defined and harmonic for $\left\{\ov{S}_n\right\}$ killed at leaving $K$, i.e. for every natural number $n$ and every vector $\ov{x} \in K$
\begin{equation*}
\label{V_harm}
\widehat{V}\left(\ov{x}\right)= \ex \widehat{V}\left(\ov{x} + \ov{S}_n\right) I\left\{ \tau(\ov{x}) > n\right\}.
\end{equation*}
Futhermore, if $p \ge 1$, then
\begin{equation*}
\label{VuEq}
\widehat{V}(\ov{x}) \sim u(\ov{x}), \;\; \mathrm{dist}\left( \ov{x}, \partial K\right) \to \infty.
\end{equation*}
\end{thmd}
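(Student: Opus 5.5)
The plan is to follow the martingale-correction approach of Denisov and Wachtel. The idea is to treat $u$ as an \emph{almost harmonic} function for the killed walk and to show that the accumulated error is summable along trajectories staying in $K$.

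\textbf{Step 1: the one-step error of $u$.} Extend $u$ by $0$ outside $K$. Since $u(\ov{x})=|\ov{x}|^p g_1(\ov{x}/|\ov{x}|)$ with $\lambda_1$ and $p$ linked by (\ref{pDef}), $u$ is harmonic in $K$. By \ref{M1}, \ref{M2} the increment has zero mean and identity covariance, so Brownian motion is the correct diffusion limit. Define
$$f(\ov{x}) := \ex u(\ov{x}+\ov{X}) - u(\ov{x}), \qquad \ov{x}\in K .$$
Take a Taylor expansion of $u$ to second order on the event $\{|\ov{X}| \le \varepsilon\,\mathrm{dist}(\ov{x},\partial K)\}$. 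The first-order term vanishes by \ref{M1}, and the second-order term reduces to $\tfrac12\Delta u=0$ by \ref{M2}. The remainder is controlled through $|D^3 u(\ov{y})|\le C|\ov{y}|^{p-3}$ (from homogeneity), combined with the boundary comparison \ref{G2} for points near $\partial K$. On the complementary event we use (\ref{simpU}) and the moment condition \ref{M3}. The target estimate is
$$|f(\ov{x})| \le C\Big(\mathrm{dist}(\ov{x},\partial K)^{p-2-\delta} + \ex\big[|\ov{X}|^{2}\wedge|\ov{X}|^{p};\,|\ov{X}|> \varepsilon\,\mathrm{dist}(\ov{x},\partial K)\big]\,\mathrm{dist}(\ov{x},\partial K)^{p-2}\Big)$$
for some $\delta>0$ (with the obvious modification when $\mathrm{dist}$ is small).

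\textbf{Step 2: the martingale.} The process $M_n = u(\ov{x}+\ov{S}_n)-\sum_{k<n} f(\ov{x}+\ov{S}_k)$ is a martingale. Optional stopping at $n\wedge\tau(\ov{x})$, together with $u\ge 0$ in $K$, $u=0$ off $K$ (Remark \ref{msign} in our example, and the general nonnegativity of $g_1$), gives
$$\ex u(\ov{x}+\ov{S}_n)I\{\tau(\ov{x})>n\} = u(\ov{x}) + \ex\sum_{k<n\wedge\tau(\ov{x})} f(\ov{x}+\ov{S}_k).$$
Hence the limit $\widehat V(\ov{x})$ exists and equals $u(\ov{x})+\ex\sum_{k<\tau(\ov{x})} f(\ov{x}+\ov{S}_k)$, provided that
$$\ex\sum_{k<\tau(\ov{x})}|f(\ov{x}+\ov{S}_k)|<\infty .$$

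\textbf{Step 3: summability of the error (the main obstacle).} This is the step I expect to be hardest. I would split the trajectory according to whether $\mathrm{dist}(\ov{x}+\ov{S}_k,\partial K)$ is smaller or larger than $k^{1/2-\gamma}$.
\begin{itemize}
\item On the far part, the bound from Step 1 gives terms of order $k^{(p-2-\delta)/2}$. These are multiplied by $\pr(\tau>k)$, which a priori (via a KMT-type coupling with Brownian motion in the cone) is $\le C(1+|\ov{x}|^p)k^{-p/2}$, so the resulting series converges.
\item The near-boundary part requires showing that the walk spends little time close to $\partial K$ before exiting. I would handle it by a preliminary estimate $\ex[\mathrm{dist}(\ov{x}+\ov{S}_{\nu_n},\partial K)^{p-1}\cdots]$, where $\nu_n$ is the first time the walk is at distance $n^{1/2-\gamma}$ from the boundary. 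This estimate uses the lower bound \ref{G3}, and the $|\ov{X}|^2\ln(1+|\ov{X}|)$ condition is exactly what makes the tail contribution $\sum_k \ex[|\ov{X}|^2;|\ov{X}|>\sqrt{k}]\cdot(\ldots)$ finite.
\end{itemize}

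\textbf{Step 4: harmonicity and asymptotics.} Harmonicity of $\widehat V$ follows from the Markov property at time $n$, the representation above, and dominated convergence, using the integrability from Step 3. For $p\ge1$ and $\mathrm{dist}(\ov{x},\partial K)\to\infty$, it remains to show that the correction term is $o(u(\ov{x}))$. I would bound it using the estimates of Step 3, now made uniform with the starting point far from the boundary: they give a bound $o(|\ov{x}|^{p-1}\mathrm{dist}(\ov{x},\partial K))$. By \ref{G3} this quantity is $O(u(\ov{x}))$, hence the correction is $o(u(\ov{x}))$, which yields $\widehat V(\ov{x})\sim u(\ov{x})$.
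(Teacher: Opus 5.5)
The paper gives no proof of this statement; it quotes it from Denisov--Wachtel \cite{DenRe}. Your framework is the right starting point: the zero extension of $u$, the martingale $u(\ov{x}+\ov{S}_n)-\sum_{k<n}f(\ov{x}+\ov{S}_k)$ stopped at $\tau(\ov{x})$, and the representation $\widehat{V}(\ov{x})=u(\ov{x})+\ex\sum_{k<\tau(\ov{x})}f(\ov{x}+\ov{S}_k)$. Step 2 and the harmonicity part of Step 4 are fine once $\ex\sum_{k<\tau(\ov{x})}|f(\ov{x}+\ov{S}_k)|\le C(1+|\ov{x}|^p)$ is known.

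However, that summability, together with the sharper uniform version needed for $\widehat{V}\sim u$, is the entire content of the theorem. Steps 1 and 3 do not establish it under (G) and (M).

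\emph{The target estimate for $f$.} For $p\le 2$ (the case used in the paper, $p\in(1,2)$), condition (M3) gives only $\ex|\ov{X}|^2\ln(1+|\ov{X}|)<\infty$, and under this your target estimate is false.
\begin{itemize}
\item Truncating the second-order Taylor term at level $\varepsilon d$, with $d=\mathrm{dist}(\ov{x},\partial K)$, leaves $-\frac12\ex[\ov{X}^{T}D^2u(\ov{x})\ov{X};|\ov{X}|>\varepsilon d]$. In the bulk this is in general of exact order $d^{p-2}\ex[|\ov{X}|^2;|\ov{X}|>\varepsilon d]$.
\item That term is not $O(d^{p-2-\delta})$ unless $\ex[|\ov{X}|^2;|\ov{X}|>t]$ decays polynomially, which (M3) does not provide.
\item Nor is it dominated by your tail term, since $|\ov{X}|^2\wedge|\ov{X}|^p=|\ov{X}|^p$ for $p<2$.
\item Near a boundary ray far from the origin it is worse. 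There $u$ is essentially linear in the normal direction with slope of order $|\ov{x}|^{p-1}$, so $f(\ov{x})\approx c|\ov{x}|^{p-1}\ex[(d+X_\perp)^-]$, with $X_\perp$ the normal component of $\ov{X}$. This carries an extra factor $(|\ov{x}|/d)^{p-1}$ that your bound lacks.
\item The bound $|D^3u(\ov{y})|\le C|\ov{y}|^{p-3}$ holds for a planar wedge but not for the general Lipschitz cones allowed by (G). There you need interior estimates $|D^ku(\ov{y})|\le C_k u(\ov{y})\,\mathrm{dist}(\ov{y},\partial K)^{-k}$ combined with (G2).
\end{itemize}

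\emph{The summation in Step 3.} With the correct bound, your scheme loses a power of $k$, which a logarithmic moment cannot absorb. Using $d\ge k^{1/2-\gamma}$ and $\pr(\tau(\ov{x})>k)\le Ck^{-p/2}$, the term above contributes
\[
\sum_k k^{-1+\gamma(2-p)}\,\ex\bigl[|\ov{X}|^2;|\ov{X}|>\varepsilon k^{1/2-\gamma}\bigr].
\]
This is finite only if $\ex|\ov{X}|^{2+\eta}<\infty$ with $\eta=2\gamma(2-p)/(1-2\gamma)$. By contrast, $\ex|\ov{X}|^2\ln(1+|\ov{X}|)<\infty$ controls only $\sum_k k^{-1}\ex[|\ov{X}|^2;|\ov{X}|>\sqrt{k}]$.

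The a priori bound via a KMT-type coupling at scale $k^{1/2-\gamma}$ also needs more than two moments, since under $\ex|\ov{X}|^r<\infty$ the coupling error is of order $n^{1/r}$. It would also have to be proved without using $\widehat{V}$.

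The near-boundary part is only gestured at. The position at a single stopping time $\nu_n$ does not control the time spent near $\partial K$ before $\tau(\ov{x})$, which is exactly where $|f|$ is largest.

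\emph{What is missing.} You need a sharp bound on the occupation measure of the killed walk. The model is the Brownian estimate: the expected time spent in $\{|\ov{y}|\approx R,\ \mathrm{dist}(\ov{y},\partial K)\approx r\}$, for $R\gg|\ov{x}|$, is of order $u(\ov{x})\,r^2R^{-p}$. Summed against such a bound, the terms of $f$ are finite exactly under the logarithmic moment. The same bound, made uniform in $\ov{x}$, is what Step 4 needs; at present Step 4 only asserts it.

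As written, your sketch is an argument for $\ex|\ov{X}|^{2+\delta}<\infty$ and a cone on which $u$ is smooth up to the boundary. It does not prove the statement under hypotheses (G) and (M).
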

\begin{thmd}[{\bf{Theorem 3}}, \cite{DenRe}]
\label{den2}
Let the assumptions (G) and (M) hold, $p \ge 1$. Then
\begin{enumerate}
\item[1)] there exists a positive constant $C$ and vector $\ov{x}_0 \in K$ such that for every vector $\ov{x} \in K$ the following inequality holds
\begin{equation*}
\label{uUnEq}
\pr \left( \tau(\ov{x}) > n \right) \le C \frac{u \left( \ov{x} + \ov{x}_0\right)}{n^{p/2}};
\end{equation*}
\item[2)] uniformly in $\ov{x} \in K$ such that $\left|\ov{x}\right| \le \sqrt{n}/\ln n$
\begin{equation*}
\label{equ}
\pr \left( \tau(\ov{x}) > n \right) \sim \frac{\varkappa \widehat{V}(\ov{x})}{n^{p/2}}, \;\; n \to \infty,
\end{equation*}
where $\varkappa > 0$ is a constant, which does not depend on $\ov{x}$.
\end{enumerate}
\end{thmd}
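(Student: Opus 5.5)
The statement above is Theorem 3 of \cite{DenRe}, and I would reproduce the Denisov--Wachtel scheme. The idea is to approximate the walk by Brownian motion once it is deep inside $K$. Near the boundary, the harmonic function $u$ (or its correction $\widehat{V}$ from Theorem \ref{den1}) transports the initial condition.

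\textbf{Step 1 (Brownian estimate).} I first prove the result for a standard Brownian motion $B$ in $K$. Expanding the Dirichlet heat kernel of the cone in the eigenfunctions $g_j$ of the Laplace--Beltrami problem gives
\[
\pr\left(\tau^{bm}(\ov{x})>t\right)\sim \varkappa\, u(\ov{x})\, t^{-p/2}
\]
uniformly in $|\ov{x}|=o(\sqrt t)$, together with the global bound $\pr(\tau^{bm}(\ov{x})>t)\le C u(\ov{x}) t^{-p/2}$. Only the leading term $\lambda_1$ survives as $|\ov{x}|/\sqrt t\to 0$, and this is exactly where $p$ from (\ref{pDef}) and $u$ from (\ref{uDef}) enter.

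\textbf{Step 2 (coupling deep inside the cone).} Fix a small $\varepsilon>0$. Using a strong approximation (KMT/Sakhanenko type, after truncating increments at level $\sqrt n/\ln n$ using \ref{M3}), I couple $\ov{S}_k$, $k\le n$, with $B$ so that $\max_{k\le n}|\ov{S}_k-B_k|\le n^{1/2-2\varepsilon}$ with probability $1-o(n^{-p/2})$. For starting points with $\mathrm{dist}(\ov{x},\partial K)\ge n^{1/2-\varepsilon}$, I sandwich the walk between Brownian motions started at $\ov{x}\pm n^{1/2-2\varepsilon}\ov{e}$ (with $\ov{e}$ an interior direction), which is possible by starlikeness \ref{G1}. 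Then Step 1, together with the continuity of $u$ implied by \ref{G2} and \ref{G3}, yields $\pr(\tau(\ov{x})>n)\sim\varkappa u(\ov{x})n^{-p/2}$ for such $\ov{x}$.

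\textbf{Step 3 (reaching the interior).} For general $\ov{x}$, let $\nu_n$ be the first time $\mathrm{dist}(\ov{x}+\ov{S}_k,\partial K)\ge n^{1/2-\varepsilon}$. I show that $\pr(\nu_n>n^{1-\varepsilon},\tau(\ov{x})>n^{1-\varepsilon})$ decays faster than $n^{-p/2}$, by iterating a uniform positive probability of jumping away from the boundary on blocks of length $n^{1-2\varepsilon}$. The strong Markov property and Step 2 then give
\[
\pr(\tau(\ov{x})>n)=\frac{\varkappa+o(1)}{n^{p/2}}\,\ex\left[u(\ov{x}+\ov{S}_{\nu_n});\tau(\ov{x})>\nu_n,\nu_n\le n^{1-\varepsilon}\right]+o(n^{-p/2}).
\]
It remains to show that this expectation converges to $\widehat{V}(\ov{x})$ uniformly for $|\ov{x}|\le\sqrt n/\ln n$ (using $\widehat V\sim u$ when far from the boundary).

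\textbf{Step 4 (martingale defect; main obstacle).} Since $\Delta u=0$ in $K$ and \ref{M2} holds, a Taylor expansion shows that $f(\ov{y}):=\ex u(\ov{y}+\ov{X})-u(\ov{y})$ is small, of order $|\ov{y}|^{p-2}$ times a tail term, plus boundary contributions. Then $Y_k=u(\ov{x}+\ov{S}_k)I\{\tau>k\}-\sum_{j<k}f(\ov{x}+\ov{S}_j)I\{\tau>j\}$ is a martingale, and one needs
\[
\sum_j \ex\left|f(\ov{x}+\ov{S}_j)\right|I\{\tau>j\}<\infty .
\]
Proving this is the hardest part. It requires an a priori bound $\pr(\tau>j)\le C(1+|\ov{x}|)^{p}j^{-p/2}$ (obtained crudely first, e.g.\ via a shifted cone $\ov{x}+\ov{x}_0$ on which $u$ is a supermartingale up to summable error), \ref{G2} to control $u$ near $\partial K$, and the $\ln$-moment in \ref{M3} to make the jump corrections summable. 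Optional stopping at $\nu_n\wedge\tau$ then gives the convergence to $\widehat V(\ov{x})$, proving part 2).

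For part 1), the same optional stopping argument applied from $\ov{x}+\ov{x}_0$ gives $\ex[u(\ov{x}+\ov{S}_{\nu_n});\tau>\nu_n]\le Cu(\ov{x}+\ov{x}_0)$ uniformly in $\ov{x}$. Combined with the Brownian global bound of Step 1, this yields the uniform inequality.
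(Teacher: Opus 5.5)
The paper gives no proof of this statement. It is quoted from \cite{DenRe} and used as a black box, so the real question is whether your reconstruction works under the hypotheses as stated. Your outline follows the Denisov--Wachtel scheme (Brownian asymptotics in the cone, the stopping time $\nu_n$, a martingale correction of $u$ producing $\widehat V$). However, Step~2 fails under (M).

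For $p\le 2$ the only moment assumption is $\ex|\ov X|^2\ln(1+|\ov X|)<\infty$. This is exactly the regime in which the paper applies the theorem: Theorem~\ref{main_nogeom} has $p\in(1,2)$ and assumes nothing more. Under this assumption no coupling with accuracy $n^{1/2-2\varepsilon}$ and small failure probability exists.

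\textbf{Counterexample.} Take $\ov X$ rotationally symmetric, normalized to identity covariance, with $\pr(|\ov X|>t)\asymp t^{-2}(\ln t)^{-3}$. Then (M) holds. The number of $k\le n$ with $|\ov X_k|>3n^{1/2-2\varepsilon}$ is binomial with mean of order $n^{4\varepsilon}(\ln n)^{-3}\to\infty$. At any such $k$ we have $|\ov S_k-B(k)|+|\ov S_{k-1}-B(k-1)|\ge|\ov X_k|-|B(k)-B(k-1)|$, while $\max_{k\le n}|B(k)-B(k-1)|=O(\sqrt{\ln n})$ in probability. Hence for \emph{every} Brownian motion $B$ on the same space, $\max_{k\le n}|\ov S_k-B(k)|>n^{1/2-2\varepsilon}$ with probability tending to one, not $1-o(n^{-p/2})$.

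\textbf{Truncation does not rescue this.} Increments of size between $3n^{1/2-2\varepsilon}$ and $\sqrt n/\ln n$ are untouched by truncation at $\sqrt n/\ln n$, and in the example there are still unboundedly many of them. Moreover, for increments bounded by $b$ the KMT accuracy is only of order $b\ln n$, which here is of order $\sqrt n$.

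\textbf{What is actually needed.} The rate $o(n^{-p/2})$ is more than your argument requires. For $\dist(\ov y,\partial K)\ge n^{1/2-\varepsilon}$, the target $\varkappa u(\ov y)n^{-p/2}$ is at least $c\,n^{-\varepsilon p}$ by (\ref{ucDO}), so a failure probability $o(n^{-\varepsilon p})$ would suffice. The example shows that even this needs moments of order strictly larger than two, which is why the strong-approximation version of the argument works under $\ex|\ov X|^{2+\delta}<\infty$. Your derivation of part 1) transfers the Brownian global bound to the walk through the same coupling, so it is affected as well.

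To reach the stated hypotheses you need two changes.
\begin{enumerate}
\item[(i)] A proof of the a priori bound 1) that avoids strong approximation. One possibility is the almost-supermartingale property of $u$ on the shifted cone, which controls survivors at distance $\ge\delta\sqrt n$ from $\partial K$, combined with a separate estimate showing that survivors are unlikely to stay within $\delta\sqrt n$ of the boundary.
\item[(ii)] A Brownian comparison that uses only the slow, logarithmic-type rates of the invariance principle available under $\ex|\ov X|^2\ln(1+|\ov X|)<\infty$. This forces the interior level to be $\theta_n\sqrt n$ with $\theta_n\to0$ slowly. It also means the exponential bound of your Step~3 must be replaced by one built on (i), since at such scales there are too few blocks to beat $n^{-p/2}$ by iteration alone.
\end{enumerate}
As written, your argument proves the theorem only under a strictly stronger moment condition than (M).
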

Note that condition \ref{M2} includes that the variance of the random walk increment is finite. However, we do not impose this condition on Theorem \ref{main_nogeom} because finiteness of the variance follows from condition \ref{M3}. We will need another group of conditions, which is the reformulation of conditions \ref{M1}, \ref{M2}, \ref{M3} for the two-dimensional random walk with correlated components of increments. Let $\left\{\ov{S}_n\right\}$ be a two-dimensional random walk with an increment $\ov{X} = \left(X^{(1)}, X^{(2)}\right)$. Let also
$$\rho = \mathrm{corr}\left( X^{(1)}, X^{(2)}\right), \quad p = \frac{\pi}{\arccos(-\rho)}.$$
The random walk $\ov{S}_n$ is said to satisfy a condition (C) if its increment satisfies conditions \ref{C1}, \ref{C2}, \ref{C3}
\begin{enumerate}[label=\subscript{\bf{C}}{{\arabic*}}, labelsep = 0pt]
\item) \label{C1} $\ex X^{(1)} = \ex X^{(2)} = 0;$
\item) \label{C2} $\ex\left|\ov{X}\right|^2 \ln \left(1 + \left|\ov{X}\right|\right) < \infty;$
\item) \label{C3} if $\rho < 0$, then $\ex \left|\ov{X}\right|^p < \infty$.
\end{enumerate}
Note that under Theorem \ref{main_nogeom} the associated random walk of BPRJRE satisfies condition (C).
\subsection{Asymptotic relation for the probability of positivity of random walks with correlated components of the increments}
Let $\left\{\ov{S}_n \right\}$ be a random walk in $\mathbb{R}^2$ with an increment $\ov{X} = \left(X^{(1)}, X^{(2)}\right)$ such that $\ex X^{(1)} = \ex X^{(2)} = 0;$ $\ex \left(X^{(1)}\right)^2 = \sigma_1^2;$ $\ex \left(X^{(2)}\right)^2 = \sigma_2^2$; $\mathrm{corr}\left( X^{(1)}, X^{(2)}\right) = \rho \in (-1, 1).$ In this case the components are correlated. However, there exists a linear transformation which leads to the new cone and the new random walk satisfying condition \ref{M2}.
Let
\begin{equation}
\label{MatrixDef}
M := \frac{1}{\sqrt{1 - \rho^2}}
\begin{pmatrix}
\cos \varphi / \sigma_1 & -\sin \varphi /\sigma_2 \\
-\sin \varphi / \sigma_1 & \cos \varphi/\sigma_2 \\
\end{pmatrix},
\quad \sin 2 \varphi = \rho.
\end{equation}
Then the covariance matrix of $M\ov{X}$ is the identity matrix. The following theorem is the corollary of theorems \ref{den1}, \ref{den2} applied to the transformed random walk.
\begin{thmd}
\label{den_2D}
Let $K = \mathbb{R}^{+} \times \mathbb{R}^{+}$, a two-dimensional random walk $\left\{\ov{S}_n\right\}$ satisfies the condition (C). Let $\tau(\ov{x})$ be the exit time of a random walk from the cone $K$ with starting point $\ov{x} \in K$, as defined in (\ref{tauDef}). Then the following statements hold.
\begin{enumerate}
\item Uniformly in $\ov{x} \in K$ such that $\left|\ov{x}\right| \le \sqrt{n}/\ln n$
\begin{equation}
\label{e}
\pr \left( \tau(\ov{x}) > n \right) \sim \frac{\varkappa V\left(M\ov{x}\right)}{n^{p/2}}, \;\; n \to \infty,
\end{equation}
where $\varkappa$ is some real positive number which does not depend on $\ov{x}$.
\item There exists a real positive number $C$ and a vector $\ov{x}_0 \in MK$ such that for every $\ov{x} \in K$, we have inequality
\begin{equation}
\label{u}
\pr \left( \tau(\ov{x}) > n \right) \le C \frac{u \left( M\ov{x} + \ov{x}_0\right)}{n^{p/2}}.
\end{equation}
\end{enumerate}
In this case the function $V$ for every $\ov{x} \in K$ defined as follows
\begin{equation}
\label{V2}
V\left(M\ov{x}\right):= \lim_{n \to \infty} \ex u\left(M\left(\ov{x} + \ov{S}_n\right)\right) I\left\{ \tau(\ov{x}) > n\right\}.
\end{equation}
Moreover:
\begin{itemize}
\item for every vector $\ov{x} \in K$ and every natural number $n$
\begin{equation}
\label{V_harm2}
V\left(M\ov{x}\right)= \ex V\left(M\left(\ov{x} + \ov{S}_n\right)\right) I\left\{ \tau(\ov{x}) > n\right\};
\end{equation}
\item the following asymptotic relation holds
\begin{equation}
\label{VuEq2}
V(M\ov{x}) \sim u(M\ov{x}), \;\; \mathrm{dist}\left( \ov{x}, \partial K\right) \to \infty.
\end{equation}
\end{itemize}
\end{thmd}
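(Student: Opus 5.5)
The plan is to reduce Theorem \ref{den_2D} to Theorems \ref{den1} and \ref{den2} via the linear change of variables $\ov{y} = M\ov{x}$. Put $\ov{Y}_i := M\ov{X}_i$ and $\ov{T}_n := M\ov{S}_n = \sum_{i\le n}\ov{Y}_i$. The walk starting at $\ov{x}$ stays in $K$ exactly when the walk $M\ov{x}+\ov{T}_n$ stays in $MK$. Hence
$$\tau(\ov{x}) = \tau'(M\ov{x}),$$
where $\tau'$ is the exit time of $\{\ov{T}_n\}$ from the cone $K' := MK$. Once $\{\ov{T}_n\}$ and $K'$ are shown to satisfy (M) and (G), all statements follow by substitution. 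In particular, (\ref{V2}) is the definition of $\widehat V$ for the new walk, (\ref{V_harm2}) is its harmonicity, and (\ref{e}), (\ref{u}) are Theorem \ref{den2} with $\ov{y}=M\ov{x}$.

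\textbf{Step 1: moment conditions.}
\begin{itemize}
\item (M1) holds because $\ex\ov{Y} = M\ex\ov{X} = 0$.
\item (M2) requires checking that $M\Sigma M^T = I$, where $\Sigma$ is the covariance matrix with entries $\sigma_1^2, \sigma_2^2$ and $\rho\sigma_1\sigma_2$. Dividing by the $\sigma$'s reduces this to the correlation matrix. Using $\sin 2\varphi=\rho$, the diagonal entries become $(1-2\rho\sin\varphi\cos\varphi)/(1-\rho^2)\cdot$ hmm; more precisely $(1-\rho\sin2\varphi)/(1-\rho^2) = 1$. The off-diagonal entry is $(\rho(\cos^2\varphi+\sin^2\varphi) - 2\sin\varphi\cos\varphi)/(1-\rho^2) = 0$. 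This is a routine computation.
\item (M3) holds because $|M\ov{X}|\le \|M\|\,|\ov{X}|$, so condition (C2) gives the $L^2\log$ condition. When $p>2$, condition (C3) gives $\ex|\ov Y|^p<\infty$. Here $p>2$ corresponds to $\arccos(-\rho)<\pi/2$, i.e. $\rho<0$, which is exactly the case covered by (C3).
\end{itemize}

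\textbf{Step 2: geometry of the new cone.} Compute $K'$. The images of the rays $e_1$ and $e_2$ are proportional to $(\cos\varphi,-\sin\varphi)$ and $(-\sin\varphi,\cos\varphi)$, that is, to the directions $-\varphi$ and $\pi/2+\varphi$. So $K'$ is the planar cone over the arc $[-\varphi,\pi/2+\varphi]$, with opening angle $\pi/2+2\varphi=\arccos(-\rho)$. This is the setting of Example \ref{MK_example}, where $p=\pi/\arccos(-\rho)$ agrees with (\ref{pDef}) and $g_1$ is given by (\ref{def_g1}). (For $\rho<0$ we take $\varphi<0$ and the same formulas apply.)

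Then verify (G):
\begin{itemize}
\item (G1) holds because a convex planar sector with opening less than $\pi$ is Lipschitz and starlike.
\item For (G2) and (G3), write $\ov{y}=r e^{it}$ with $t=-\varphi+\theta$. Then $u(\ov{y}) = r^p g_1(t)$ with $g_1(t)\propto \sin(p\theta)$, while $\dist(\ov{y},\partial K') = r\sin(\min(\theta,\alpha-\theta))$ for an opening $\alpha<\pi$. Since $p\alpha=\pi$, the ratio $\sin(p\theta)/\sin(\min(\theta,\alpha-\theta))$ is continuous and strictly positive on the closed arc, with endpoint limits $p$. So it is bounded above and below, which gives (\ref{ucUP}) and (\ref{ucDO}).
\end{itemize}

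\textbf{Step 3: transfer.} Apply Theorem \ref{den1} to $\{\ov T_n\}$ in $K'$. This yields the existence of the limit (\ref{V2}), the harmonicity (\ref{V_harm2}), and $V(\ov y)\sim u(\ov y)$ as $\dist(\ov y,\partial K')\to\infty$. To obtain (\ref{VuEq2}), note that $M$ is invertible, so $\dist(M\ov x,\partial K')\ge \|M^{-1}\|^{-1}\dist(\ov x,\partial K)$ and the two limits coincide.

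Apply Theorem \ref{den2}, whose hypothesis $p\ge1$ holds since $\arccos(-\rho)\le\pi$. Part 1 gives (\ref{u}) directly. For part 2, the uniformity region $|M\ov x|\le\sqrt n/\ln n$ must cover $|\ov x|\le \sqrt n/\ln n$. Since $|M\ov x|\le\|M\|\,|\ov x|$, this requires handling the constant $\|M\|$. If $\|M\|\le1$ there is nothing to do. Otherwise I would note that the Denisov–Wachtel uniformity actually holds for $|\ov y|\le \varepsilon_n\sqrt n$ with any $\varepsilon_n\to0$. Failing that, rescaling $\ln n$ by a constant is absorbed, because $C\sqrt n/\ln n\le \sqrt n/\ln(n^{1/(2C)})$ is of the same form up to a change in the admissible sequence.

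This last bookkeeping point, together with the verification of (G2) and (G3) near the boundary rays, is the main obstacle. Everything else is substitution.
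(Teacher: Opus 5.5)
Your proposal is correct and follows the paper's proof. You whiten by $M$, check (M) for $M\ov{X}$ and (G) for $MK$ through the explicit first eigenfunction, and then read everything off from Theorems~\ref{den1} and~\ref{den2}. Your bounded, positive ratio $\sin(p\theta)/\sin(\min(\theta,\alpha-\theta))$ is exactly the argument of Proposition~\ref{G2G3}, and it works for either sign of $\varphi$.

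The one soft spot is the uniformity region when $\|M\|>1$. Your fallback of "rescaling $\ln n$" is circular, since it only enlarges the region to $|\ov{y}|\le 2C\sqrt{n}/\ln n$. So that point is closed only by your first remedy, the $|\ov{y}|\le\theta_n\sqrt{n}$, $\theta_n\to 0$ form of the Denisov--Wachtel asymptotics. The paper itself passes over this issue without comment.
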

In what follows, we agree to write $\ov{x}_0$ for the parameter from inequality (\ref{u}), $K$ for the the first quadrant, $\tau(\ov{x})$ for the exit time of a random walk with starting point $\ov{x}$ from the quadrant $K$. Throughout the following the functions $V$ and $u$ correspond to the functions $\widehat{V}$ and $u$ defined in Subsection \ref{DenSubsection}, respectively.
\begin{cons}
Suppose for two vectors $\ov{x}, \ov{y} \in K$ that $\ov{x} \le \ov{y}$. Then we have
\begin{equation}
\label{V_mon}
V\left(M\ov{x}\right) \le V\left(M\ov{y}\right).
\end{equation}
\begin{proof}
Note that the inequality $\ov{x} \le \ov{y}$ entails $m_n\left(\ov{x}\right) \le m_n\left(\ov{y}\right)$ for every natural number $n$, where the function $m_n$ is defined by (\ref{mn_def}). From this inequality and relation (\ref{e}), we get the desired inequality.
\end{proof}
\end{cons}
\begin{cons}
There exists a constant $C$ such that for every $\ov{x} \in K$ 
\begin{equation}
\label{VU_un_eq}
V(M\ov{x}) \le C u(M\ov{x} + \ov{x}_0).
\end{equation}
\end{cons}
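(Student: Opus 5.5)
The plan is to compare the two asymptotic descriptions of $m_n(\ov{x}) = \pr\left(\tau(\ov{x}) > n\right)$ in Theorem \ref{den_2D}. Part 1 identifies the asymptotics of $m_n(\ov{x})$ in terms of $V(M\ov{x})$, and part 2 bounds $m_n(\ov{x})$ by $u(M\ov{x}+\ov{x}_0)$; dividing out $n^{-p/2}$ should yield (\ref{VU_un_eq}). This mirrors the proof of (\ref{V_mon}), where relation (\ref{e}) was combined with a pointwise comparison of $m_n$.

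First fix an arbitrary $\ov{x} \in K$. Since $\ov{x}$ is fixed, for all sufficiently large $n$ we have $|\ov{x}| \le \sqrt{n}/\ln n$, so relation (\ref{e}) applies and gives
$$\varkappa V(M\ov{x}) = \lim_{n\to\infty} n^{p/2}\, m_n(\ov{x}).$$
Part 2 of Theorem \ref{den_2D}, i.e. inequality (\ref{u}), holds for every $n$ and every $\ov{x}\in K$, with $C$ and $\ov{x}_0$ independent of $\ov{x}$. Hence
$$n^{p/2} m_n(\ov{x}) \le C\, u(M\ov{x}+\ov{x}_0)$$
for all $n$. Passing to the limit $n\to\infty$ gives $\varkappa V(M\ov{x}) \le C u(M\ov{x}+\ov{x}_0)$. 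Since $\varkappa>0$ does not depend on $\ov{x}$, dividing by it yields (\ref{VU_un_eq}) with constant $C/\varkappa$.

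The only point requiring care is that the limit in (\ref{e}) is used only pointwise in $\ov{x}$, not uniformly, so no control on the range $|\ov{x}|\le \sqrt{n}/\ln n$ is needed beyond $n$ large for fixed $\ov{x}$. One must also check that $M\ov{x}+\ov{x}_0$ lies in the domain where $u$ is defined (the cone $MK$), which is implicit in the statement of (\ref{u}) since $\ov{x}_0\in MK$ and $MK$ is a convex cone. Finally, the case $V(M\ov{x})=0$ is trivial because $u\ge 0$ on $MK$ by Remark \ref{msign}. I expect no real obstacle; the main step is just the limit passage in the inequality.
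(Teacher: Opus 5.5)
Your proof is correct and is the argument the paper intends. The paper states this corollary without a written proof, right after proving (\ref{V_mon}) by the same device: fix $\ov{x}$, bound $n^{p/2} m_n(\ov{x})$ by (\ref{u}), and pass to the limit using (\ref{e}), which gives the constant $C/\varkappa$. Your remarks that (\ref{e}) is needed only pointwise in $\ov{x}$, and that the case $V(M\ov{x})=0$ is trivial, are appropriate.
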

\begin{proof}[Proof of Theorem \ref{den_2D}]
Let $\varphi := 1/2 \arcsin\rho \in \left(-\pi/4, \pi/4\right).$ Obviously, the image $MK$ of the cone $K = \mathbb{R}^{+}\times \mathbb{R}^{+}$ under the map $M$ is as shown in Figure ~\ref{MKdisp}.
\begin{figure}[H]
\centering
\begin{minipage}{.5\textwidth}
\centering
\includegraphics[width=1.\linewidth, height=200px]{
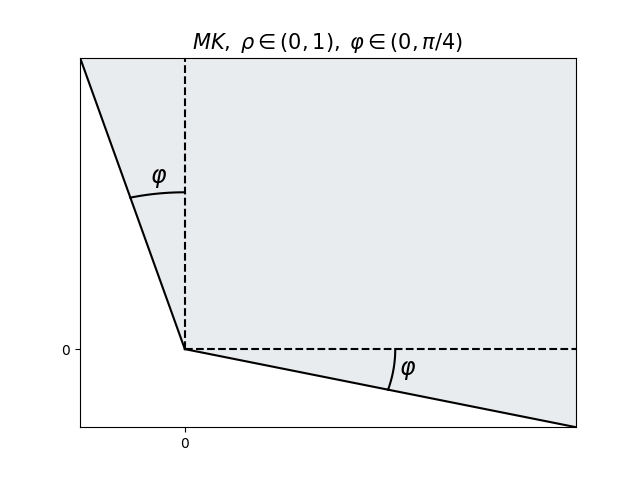}
\end{minipage}%
\begin{minipage}{.5\textwidth}
\centering
\includegraphics[width=1.\linewidth, height=200px]{
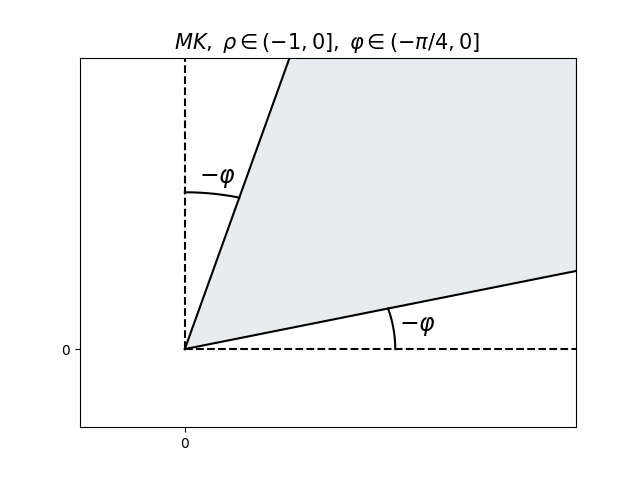}
\end{minipage}
\caption{ $MK$}
\label{MKdisp}
\end{figure}
\begin{remark}
From Figure~\ref{MKdisp} and Example \ref{MK_example} we obtain the following representation for the parameter p which is defined by (\ref{pDef}):
$$p = \frac{\pi}{\arccos\left(-\rho\right)}.$$
Particularly, if $\rho \in (0, 1)$, then $p \in (1, 2)$.
\end{remark}
From Figure~\ref{MKdisp} the validity of condition \ref{G1} for the cone $MK$ obviously follows. In Example \ref{MK_example} we get the description for Laplace-Beltrami operator's eigenfunction $g_1$, which corresponds to the minimum eigenvalue. Explicit representation (\ref{def_g1}) for the function $g_1$ allows us to prove the validity of conditions \ref{G2} and \ref{G3} for the cone $MK$. In other words, following Proposition \ref{G2G3} is true.
\begin{prop}
\label{G2G3}
For every number $\rho \in (-1, 1)$ there exist positive constants $C_1$, $C_2$ such that for every vector $\ov{x} \in MK$
$$C_1 \mathrm{dist}\left(\ov{x}, \partial MK\right) \le \left|\ov{x}\right| g_1 \left( \frac{\ov{x}}{\left|\ov{x}\right|}\right) \le C_2 \mathrm{dist}\left(\ov{x}, \partial MK\right).$$
\end{prop}
\begin{proof}[Proof of Proposition \ref{G2G3}]
Let us rewrite $\ov{x} \in MK$ in the way $r\left(\cos t, \sin t\right)$, where $t \in \left( -\varphi, \pi/2 + \varphi \right)$, $r > 0$, i.e. $t$ and $\varphi$ are the polar angle and the polar radius, respectively. If $x_2 \le x_1$, then
\begin{align*}
\mathrm{dist}\left(\ov{x}, \partial MK \right) = r \sin \left( \varphi + t\right),
\end{align*}
since $\mathrm{dist}\left(\ov{x}, \partial MK \right)$ is the length of the cathetus opposite to the angle $\varphi + t$ in the right triangle with hypotenuse of length $r$. If $\varphi \notin \left\{-\pi/4 + \pi /(8k), \; k\in \mathbb{N}\right\}$, then
$$\left|\ov{x}\right| g_1 \left( \frac{\ov{x}}{\left|\ov{x}\right|}\right) = r \frac{\sin{\left(p(\varphi + t)\right)}}{\cos{p\varphi}}.$$
Let us define
$$h(t) := \frac{\sin{\left(p(\varphi + t)\right)}}{\sin{(\varphi + t)} \cos{p\varphi}}, \quad t \in (-\varphi, \pi/4).$$
Note that the following limits are finite:
$$\lim_{t \to -\varphi_{+}} h(t) = \frac{p}{\cos{(p\varphi)}}, \quad \lim_{t \to \pi/4 -} h(t) = \frac{\sin\left(p(\pi/4 + \varphi)\right)}{\sin\left(\pi/4 + \varphi\right) \cos {(p\varphi)}} < \infty.$$
The function $h$ can be continuously extended to the set $\left[-\varphi, \pi/4\right]$. Therefore, $h$ is bounded on $\left[-\varphi, \pi/4\right]$. Furthermore, since $h(t) > 0$ for $t$ from the interval $\left[-\varphi, \pi/4\right]$, the lower bound constant can be chosen positive.
\\ \ab Consider the case $\varphi = -\pi/4 + \pi /(8k)$ for some $k \in \mathbb{N}$. Then
$$\left|\ov{x}\right| g_1 \left( \frac{\ov{x}}{\left|\ov{x}\right|}\right) = (-1)^k r \cos{(4kt)}.$$
The following relations hold
$$\lim_{t \to -\varphi_{+}} \frac{(-1)^k \cos{(4kt)}}{\sin{(\varphi + t)}} = 4k, \quad \lim_{t \to \pi/4_{-}} \frac{(-1)^k \cos{(4kt)}}{\sin{(\varphi + t)}} = \frac{1}{\sin(\varphi + \pi/4)} < \infty; \; k \in \mathbb{N}.$$
Thus, the function $$h(t) := \frac{(-1)^k \cos{(4kt)}}{\sin{(\varphi + t)}} > 0, \;\;\; t\in\left(-\varphi, \pi/4\right),$$
which is continuously extended to the points$-\varphi, \pi/4$ are bounded and positive on the set $[-\varphi, \pi/4]$.
We show that if $x_2 \le x_1$, then there exists desired constants $C_1$, $C_2$. Taking into account the symmetry (see Figure \ref{MKdisp}) such constants can be also found in the case $x_2 > x_1$. It proves Proposition \ref{G2G3}.
\end{proof}
Taking into account Proposition \ref{G2G3}, we conclude that conditions \ref{G2} and \ref{G3} hold. Thus, assumptions G are true. For the random walk $\left\{ M\ov{S}_n\right\}$ conditions \ref{M1} and \ref{M2} hold by the definition of the matrix $M$. Let us consider assumption \ref{M3}.
\begin{prop}
\label{use_uneq}
For every $\ov{z} \in K$ the following inequalities hold
\begin{equation}
\label{eq_im_M}
|M\ov{z}| \ge \frac{|\ov{z}|}{\|M^{-1}\|};
\end{equation}
\begin{equation}
\label{dist_eq_1}
\mathrm{dist}\left(\ov{z}, \partial K\right) \le \|M^{-1}\| \mathrm{dist}\left(M\ov{z}, \partial MK\right).
\end{equation}
\end{prop}
\begin{proof}[Proof of Proposition \ref{use_uneq}]
Inequality (\ref{eq_im_M}) follows from the definition of operator norm. More precisely, for every $\ov{z} \in K$
$|M^{-1}M \ov{z}| \le \|M^{-1}\| |M\ov{z}|.$ Relation (\ref{dist_eq_1}) is also a simple corollary of the definition of operator norm:
\begin{align*}
\mathrm{dist}(\ov{z}, \partial K) &= \inf_{y \in \partial K} |\ov{z}- \ov{y}| =
\inf_{y \in \partial K} \left|M^{-1}(M\ov{z} - M{\ov{y}}) \right| \le \|M^{-1}\| \inf_{y \in \partial K} |M\ov{z} - M\ov{y}| \le
\\ & \le \|M^{-1}\| \mathrm{dist} \left( M\ov{z}, M\partial K\right) = \|M^{-1}\| \mathrm{dist} \left( M\ov{z}, \partial MK\right).
\end{align*}
\end{proof}
\begin{prop}
\label{lneq}
Let $\ov{X}$ be a two-dimensional random vector. Then the conditions
\begin{align*}
&\ex \left|\ov{X}\right|^2 \ln \left(1 + \left|\ov{X}\right|\right) < \infty, \quad \ex \left|M\ov{X}\right|^2 \ln \left(1 + \left|M\ov{X}\right|\right) < \infty
\end{align*}
are equivalent.
\end{prop}
\begin{proof}[Proof of Proposition \ref{lneq}]
Let us note that for every $a > 0$ and $x\ge 0$ 
$$\min \{a, 1\}\ln \left(1 + x\right) \le \ln (1 + ax) \le \max \{a, 1\} \ln(1 +x).$$
Combining this with inequality (\ref{eq_im_M}), we get the statement of Proposition \ref{lneq}.
\end{proof}
Proposition \ref{lneq} lets us conclude that condition \ref{M3} for the random walk $\left\{M\ov{S}_n\right\}$ is valid. Taking into account Theorems \ref{den1}, \ref{den2} and Propositions \ref{G2G3}, \ref{lneq}, we yield Theorem \ref{den_2D}.
\end{proof}
\begin{prop}
For every vector $\ov{y}$ from the closure of the cone $MK$ and for every $\delta > 0$ there exists parameter $C = C\left(\ov{y}, \delta\right)$ such that for every $\ov{x} \in MK$ the inequality $\mathrm{dist}\left(\ov{x}, \partial MK\right) > \delta$ implies that
\begin{equation}
\label{uNPart}
u \left( \ov{x} + \ov{y}\right) \le C u\left(\ov{x}\right).
\end{equation}
Furthermore, the parameter $C$ can be chosen such that
\begin{equation*}
\label{exp_for_constant}
C(\ov{y}, \delta) = \widetilde{C}\left(1 +\left\|M\right\| \left\|M^{-1}\right\|\frac{|\ov{y}|}{\delta} \right)^{p-1}\left(1 + \frac{\dist\left(\ov{y}, \partial MK \right)}{\delta}\right) \le \widetilde{C}\left(1 + \frac{\|M\| \|M^{-1}\|}{\delta}\right)^p \mathrm{max}\left(|\ov{y}|^p, 1\right),
\end{equation*}
where $\widetilde{C}$ does not depend on both $\ov{y}$ and $\delta$.
\end{prop}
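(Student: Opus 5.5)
Since $u(\ov{x}) = |\ov{x}|^{p} g_1(\ov{x}/|\ov{x}|) = |\ov{x}|^{p-1}\cdot |\ov{x}| g_1(\ov{x}/|\ov{x}|)$, Proposition \ref{G2G3} gives the two-sided bound
$$C_1 |\ov{x}|^{p-1}\mathrm{dist}\left(\ov{x}, \partial MK\right) \le u(\ov{x}) \le C_2 |\ov{x}|^{p-1}\mathrm{dist}\left(\ov{x}, \partial MK\right), \quad \ov{x} \in MK,$$
i.e. conditions \ref{G2}, \ref{G3}. The plan is to bound the two factors of $u(\ov{x}+\ov{y})$ separately by the corresponding factors of $u(\ov{x})$. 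We have $u(\ov{x}+\ov{y}) \le C_2 |\ov{x}+\ov{y}|^{p-1}\dist(\ov{x}+\ov{y},\partial MK)$ and $u(\ov{x}) \ge C_1|\ov{x}|^{p-1}\dist(\ov{x},\partial MK)$. Note that $\ov{x}+\ov{y} \in MK$, because $MK$ is a convex cone (a sector of opening $\pi/2+2\varphi<\pi$) and $\ov{y}$ lies in its closure. So it suffices to control the ratios
$$\frac{|\ov{x}+\ov{y}|^{p-1}}{|\ov{x}|^{p-1}}, \qquad \frac{\dist(\ov{x}+\ov{y},\partial MK)}{\dist(\ov{x},\partial MK)}.$$

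For the distance ratio, observe that $\dist(\cdot,\partial MK)$ on a convex sector is the minimum of the distances to the two boundary rays. Each of these is a linear function $\langle \ov{n}_i,\cdot\rangle$ with a unit normal $\ov{n}_i$, since the opening is less than $\pi$, so the nearest boundary point is on the line and not at the vertex; the vertex can be handled by monotonicity anyway. Hence it is superadditive and also subadditive up to the sum:
$$\dist(\ov{x}+\ov{y},\partial MK) \le \dist(\ov{x},\partial MK) + \max_i\langle \ov{n}_i,\ov{y}\rangle.$$
I expect $\max_i\langle \ov{n}_i,\ov{y}\rangle$ to be comparable to $\dist(\ov{y},\partial MK)$ only up to a constant depending on the angle. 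The safer route is to bound it by the $1$-Lipschitz property, $\dist(\ov{x}+\ov{y},\partial MK)\le \dist(\ov{x},\partial MK)+|\ov{y}|$, and to refine to $\dist(\ov{y},\partial MK)$ only if needed for the stated form. This refinement is precisely where the angular constant enters $\widetilde{C}$. Dividing by $\dist(\ov{x},\partial MK)>\delta$ then gives the factor $1+\dist(\ov{y},\partial MK)/\delta$, or $1+|\ov{y}|/\delta$ in the cruder version.

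For the norm ratio, $|\ov{x}+\ov{y}|\le |\ov{x}|+|\ov{y}|$, and $|\ov{x}|\ge \dist(\ov{x},\partial MK)>\delta$, since the vertex $0$ lies on $\partial MK$. This gives $|\ov{x}+\ov{y}|/|\ov{x}| \le 1+|\ov{y}|/\delta$, hence the factor $(1+|\ov{y}|/\delta)^{p-1}$. The factor $\|M\|\|M^{-1}\|\ge 1$ in the stated constant only enlarges the bound; it presumably arises in the paper from passing between $K$ and $MK$ via Proposition \ref{use_uneq}, and I would simply insert it, since $\|M\|\|M^{-1}\|\ge1$. With $p>1$, combining the two factors yields $C(\ov{y},\delta)$ with $\widetilde{C}=C_2/C_1$ times the angular constant.

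The final inequality comes from elementary estimates. Since $\dist(\ov{y},\partial MK)\le|\ov{y}|$, we have
$$\left(1+\frac{\dist(\ov{y},\partial MK)}{\delta}\right) \le \left(1+\frac{\|M\|\|M^{-1}\|}{\delta}\right)\max(|\ov{y}|,1),$$
and likewise for the other factor. Using $\max(|\ov{y}|,1)^{p-1}\max(|\ov{y}|,1)=\max(|\ov{y}|^p,1)$ then gives the stated bound.

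The main obstacle is the distance-to-boundary comparison, namely getting $\dist(\ov{y},\partial MK)$ rather than $|\ov{y}|$ near the vertex of the sector. I would handle it by the explicit two-ray description of $\partial MK$ from Figure \ref{MKdisp}.
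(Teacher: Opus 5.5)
\textbf{Gap: the refinement from $|\ov y|$ to $\dist(\ov y,\partial MK)$ cannot be done, and the paper's proof fails at the same point.}

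What is right: your decomposition is the paper's. You bound $u(\ov x+\ov y)$ from above and $u(\ov x)$ from below by Proposition \ref{G2G3}, then control the norm ratio and the distance ratio separately. Your version is actually sounder. The observation $|\ov x|\ge \dist(\ov x,\partial MK)>\delta$ (the vertex lies on $\partial MK$) makes the paper's detour through $\|M\|\|M^{-1}\|$ unnecessary. Your ``cruder version'', using $\dist(\ov x+\ov y,\partial MK)\le \dist(\ov x,\partial MK)+|\ov y|$, is a complete proof with $C(\ov y,\delta)=\widetilde C(1+|\ov y|/\delta)^{p}$. That already gives the final bound $\widetilde C(1+\|M\|\|M^{-1}\|/\delta)^p\max(|\ov y|^p,1)$. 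This final form is the only one used later: in Theorem \ref{th_plus} with $\ov y=\ov x_0$, and in Lemma \ref{bor_xi}.

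Where it fails: you expect $\max_i\langle \ov n_i,\ov y\rangle$ to be comparable to $\dist(\ov y,\partial MK)$ up to an angular constant. This is false. On the convex sector, $\dist(\ov y,\partial MK)=\min_i\langle \ov n_i,\ov y\rangle$, which vanishes on the boundary rays, while $\max_i\langle\ov n_i,\ov y\rangle$ is comparable to $|\ov y|$.

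No other argument can rescue the step either: the first expression for $C(\ov y,\delta)$ is false when $p\in(1,2)$, i.e.\ for $\rho\in(0,1)$, the case of the main theorem. Here is a counterexample.
\begin{itemize}
\item Let $\ov e_1,\ov e_2$ be the unit directions of the boundary rays and $\ov n_1,\ov n_2$ the inward unit normals.
\item Take $\ov y=b\,\ov e_2$, so that $\dist(\ov y,\partial MK)=0$.
\item Take $\ov x=s\,\ov e_1+\delta'\ov n_1$ with $\delta'>\delta$ and $s\to\infty$.
\item Then $\dist(\ov x,\partial MK)=\delta'$, $\dist(\ov x+\ov y,\partial MK)=\delta'+b\cos 2\varphi$, and $|\ov x+\ov y|/|\ov x|\to 1$.
\end{itemize}
By Proposition \ref{G2G3}, $u(\ov x+\ov y)/u(\ov x)$ is eventually at least $c\,(1+b\cos2\varphi/\delta')$, which is linear in $b/\delta$ as $\delta'\downarrow\delta$. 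The claimed constant is only $\widetilde C(1+\|M\|\|M^{-1}\|\,b/\delta)^{p-1}$ with $p-1<1$, so no $\widetilde C$ independent of $\ov y$ and $\delta$ works.

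The paper's own proof reaches that expression through the unstated step $\dist(\ov x+\ov y,\partial MK)\le \dist(\ov x,\partial MK)+\dist(\ov y,\partial MK)$. This is exactly the false subadditivity above: $\min_i\langle\ov n_i,\cdot\rangle$ is superadditive, not subadditive. So the obstacle you flagged is not a technicality near the vertex. The fix is to drop the refinement and state the constant with $|\ov y|$ in place of $\dist(\ov y,\partial MK)$; everything downstream still goes through.
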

\begin{proof}
Firstly, we fix $\ov{y} \in MK$ and $\delta > 0$. Since inequalities (\ref{eq_im_M}), (\ref{dist_eq_1}) are valid for every $\ov{z} \in MK$ the relation
$$\mathrm{dist}\left( \ov{z}, \partial MK\right) \le \|M\|\|M^{-1}\| \left|\ov{z}\right|$$
holds. Hence, taking into account (\ref{ucUP}), (\ref{ucDO}), $\mathrm{dist}\left(\ov{x}, \partial MK\right) > \delta$, we get 
\begin{align*}
u \left( \ov{x} + \ov{y}\right) &\le C_1 \left|\ov{x} + \ov{y} \right|^{p - 1} \mathrm{dist}\left(\ov{x} + \ov{y}, \partial MK\right) \le
\\ &\le C_1 |\ov{x}|^{p-1} \dist(\ov{x}, \partial MK) \left(1 + \|M\|\|M^{-1}\|\frac{|\ov{y}|}{\delta} \right)^{p-1} \left( 1 + \frac{\dist(\ov{y}, \partial MK)}{\dist(\ov{x}, \partial MK)}\right) \le
\\ &\le \widetilde{C}\left(1 + \|M\|\|M^{-1}\|\frac{|\ov{y}|}{\delta} \right)^{p-1}\left(1 + \frac{\dist\left(\ov{y}, \partial MK \right)}{\delta}\right) u(\ov{x}) = C(\ov{y}, \delta) u(\ov{x}).
\end{align*}
Therefore, the following inequalities hold:
$$C\left(\ov{y}, \delta\right) \le \widetilde{C} \left(1 + \|M\|\|M^{-1}\|\frac{|\ov{y}|}{\delta} \right)^{p} \le \widetilde{C}\left(1 + \frac{\|M\| \|M^{-1}\|}{\delta}\right)^p \mathrm{max}\left(|\ov{y}|^p, 1\right).$$
From the continuously of the functions $u$ and $C(\cdot, \delta)$ on the closure of the cone $MK$ we conclude that relation (\ref{uNPart}) is true for every $\ov{y}$ from the closure of $MK$.
\end{proof}
\section{P-plus measure}
\label{Plus_Measure}
In the one-dimensional case so-called $\pr^{+}$ measure plays crucial role in investigations of branching processes (see, e.g. \cite{Vatutin}, Section 5.2 ``Changes of measures''). We introduce the similar construction in the two-dimensional case. Note that most of this Section is similar to preprint \cite{Pre}.
\\ \ab Let $K = \mathbb{R}^{+}\times \mathbb{R}^{+}$. Let also $\ov{X}$ has the same distribution as the increment of the associated random walk $\left\{\ov{S}_n\right\}$. Let $\varphi: \; \mathbb{R}^{2} \rightarrow \mathbb{R}$ be a function such that
\begin{enumerate}[label=\subscript{\bf{H}}{{\arabic*}}, labelsep = 0pt]
\item. $\varphi\left(\ov{x}\right) > 0$ for $\ov{x} \in K$; \label{H_1}
\item. $\varphi\left(\ov{x}\right) = 0$ for $\ov{x} \notin K$; \label{H_2}
\item. ${\bf{E}}\, \varphi \left( \ov{x} + \ov{X}\right) = {\bf{E}}\, \varphi \left( \ov{x} + \ov{X}\right) I\{ \ov{x} + \ov{X} \in K \} = \varphi \left(\ov{x}\right).$ \label{H_3}
\end{enumerate}
Then for every $n \in \mathbb{N}$ and $\ov{x} \in K$ the expression
$$\pr_{n, \bfx}^{+}(A) := {\bf{E}} \frac{\varphi \left( \ov{x} + \ov{S}_n\right)}{\varphi \left( \ov{x}\right)}
I\{\ov{S}_j + \ov{x} > 0, \, j \le n \} I_{A}$$
defines the probability measure on the $\sigma$-field $\mathcal{F}_n = \sigma \left( \Pi_1^{(1)}, \Pi_1^{(2)}, \ldots, \Pi_n^{(1)}, \Pi_n^{(2)}\right),$ where for every $i \in \mathbb{N}$ and $k \in \left\{ 1, 2\right\}$ $\Pi_i^{(k)}$ is the sequence from the definition of the random environment $\bol{f}$ (see equality (\ref{rand_envi})).
Given $\ov{x}$, the measures $\{\pr_{n, \bfx}^{+}\}$ are consistent. Therefore, there exists the probability measure $\pr_{\bfx}^{+}$ on the $\sigma$-field $\mathcal{F} = \sigma \left( \Pi_n^{(1)}, \Pi_n^{(2)}; \; n \in \mathbb{N} \right)$ such that $\left.\pr_{\bfx}^{+}\right|_{\mathcal{F}_n} = \pr_{n, \bfx}^{+}$.
\begin{remark}
\label{cont_PP_plus}
As follows from the definition of the $\pr_{\bfx}^{+}$ measure, $\pr_{\bfx}^{+}$ is absolutely continuous with respect to the measure $\pr$ on every $\sigma$-field $\mathcal{F}_k$, $k \in \mathbb{N}$.
\end{remark}
Consider $\varphi(\ov{x}) := V(M\ov{x}) I\{ x_1 > 0, x_2 > 0\},$ where $\ov{x} = (x_1, x_2) \in \mathbb{R}^{2}$, the function $V$ is defined by (\ref{V2}). This choice of the function $\varphi$ can violate condition \ref{H_1} since $V(M\ov{x})$ may be zero. However, taking into account relation (\ref{VuEq2}) and positivity of the function $u$, the condition is valid for all vectors $\ov{x}$ which are far enough from the border of the quadrant $K$. We define the measure $\pr_{\bfx}^{+}$ only for $\ov{x}$ from the set
$$\mathcal{X}_V = \left\{\ov{y} \in K : V\left(M\ov{y}\right) \neq 0 \right\}.$$
Now we are ready to prove the two-dimensional analogue of Lemma 5.2 from \cite{Vatutin}.
\begin{thmd}
\label{th_plus}
Let $\ov{x} \in \mathcal{X}_V$. Assume that the following conditions are valid for a sequence of random variables $\left\{Y_k\right\}$:
\begin{enumerate}
\item[1)] the random variable $Y_k$ is $\mathcal{F}_k$-measurable for every natural $k$;
\item[2)] $\left\{Y_n\right\}$ converges as $n \to \infty$ to some random variable $Y_{\infty}$ $\pr_{\bfx}^{+}$-a.s.;
\item[3)] $\left\{Y_n\right\}$ is a uniformly bounded sequence, it means that there exists a positive constant $C$ such that $|Y_k| \le C$ for any natural $k$.
\end{enumerate}
Then
$$\lim_{n \to \infty} {\bf{E}} \left( Y_n \left.\right| \tau(\ov{x}) > n \right) = {\bf{E}}_{\bfx}^{+} Y_{\infty}.$$
\end{thmd}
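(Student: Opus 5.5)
We follow the standard one-dimensional argument (Lemma 5.2 in \cite{Vatutin}). The plan is to approximate $Y_n$ by $Y_k$ for a fixed $k$, handle the $Y_k$ part with the Markov property at time $k$ and the asymptotics (\ref{e}), and then let $k\to\infty$.

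\textbf{Step 1: fixed $k$.} Fix $k$ and take $n>k$. Writing $\ov{S}_n-\ov{S}_k$ as an independent copy of $\ov{S}_{n-k}$, the Markov property gives
$$\ex\, Y_k I\{\tau(\ov{x})>n\} = \ex\, Y_k I\{\tau(\ov{x})>k\}\, m_{n-k}\left(\ov{x}+\ov{S}_k\right).$$
We divide by $m_n(\ov{x})$. By (\ref{e}) we have $m_n(\ov{x})\sim \varkappa V(M\ov{x}) n^{-p/2}$, and this is positive because $\ov{x}\in\mathcal{X}_V$. For each fixed realization, $m_{n-k}(\ov{x}+\ov{S}_k)\, n^{p/2}\to \varkappa V(M(\ov{x}+\ov{S}_k))$, since $|\ov{x}+\ov{S}_k|\le\sqrt n/\ln n$ eventually. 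To pass to the limit under the expectation we dominate using (\ref{u}):
$$m_{n-k}(\ov{y})\, n^{p/2}\le C\, u(M\ov{y}+\ov{x}_0)\,\bigl(n/(n-k)\bigr)^{p/2} \le C' \left(1+|\ov{S}_k|^p\right).$$
This bound follows from (\ref{simpU}). Because $p\in(1,2)$ when $\rho\in(0,1)$, and $\ex|\ov{X}|^2<\infty$, the dominating function is integrable. Dominated convergence then yields
$$\lim_{n\to\infty}\frac{\ex\, Y_k I\{\tau(\ov{x})>n\}}{m_n(\ov{x})} = \ex\, Y_k\frac{V(M(\ov{x}+\ov{S}_k))}{V(M\ov{x})}I\{\tau(\ov{x})>k\} = \ex_{\bfx}^{+}Y_k.$$
The last equality is the definition of $\pr^{+}_{k,\bfx}$ with $\varphi(\ov{y})=V(M\ov{y})I\{\ov{y}\in K\}$, which is harmonic by (\ref{V_harm2}). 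Applying the same computation with $Y_k$ replaced by $I_A$ shows that, for every $A\in\mathcal{F}_k$,
$$\pr\left(A\mid\tau(\ov{x})>n\right)\to\pr^{+}_{\bfx}(A),$$
uniformly in $A\in\mathcal{F}_k$, because the domination does not depend on $A$.

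\textbf{Step 2: replacing $Y_n$ by $Y_k$.} This is the main obstacle. We must show
$$\lim_{k\to\infty}\limsup_{n\to\infty}\ex\left(|Y_n-Y_k|\mid\tau(\ov{x})>n\right)=0.$$
The difficulty is that $Y_n$ is $\mathcal{F}_n$-measurable with $n$ growing, so Step 1 does not apply to it directly. The device from the one-dimensional case is to fix $\varepsilon>0$ and introduce, for $k$ fixed and $m\ge k$, the event
$$A_{k,m}=\Bigl\{\sup_{k\le j\le m}|Y_j-Y_k|>\varepsilon\Bigr\}\in\mathcal{F}_m.$$
We need a bound on $\ex(|Y_n-Y_k|\mid\tau>n)$ that is uniform in $n$.

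The cleanest route I would try first is the following. For $\ell\le n$, conditioning on $\mathcal{F}_\ell$ gives
$$\frac{\ex\, I_B I\{\tau>n\}}{m_n(\ov{x})}\le C\,\ex\, I_B\frac{u(M(\ov{x}+\ov{S}_\ell)+\ov{x}_0)}{V(M\ov{x})}I\{\tau>\ell\}\left(\frac{n}{n-\ell}\right)^{p/2}$$
for $B\in\mathcal{F}_\ell$. For $\ell\le n/2$ the factor $(n/(n-\ell))^{p/2}$ is bounded. One then compares $u(M\ov{y}+\ov{x}_0)$ with $V(M\ov{y})$ using (\ref{VuEq2}) together with the shift bound (\ref{uNPart}), so that the right-hand side is at most a constant times $\pr^{+}_{\bfx}(B)+\pr(\text{walk near }\partial K\text{ at time }\ell)$-type terms. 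This shows that conditional laws on $\mathcal{F}_{\ell}$, $\ell\le n/2$, are uniformly absolutely continuous with respect to $\pr^{+}_{\bfx}$.

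Applying this with $B=A_{k,\lfloor n/2\rfloor}$, and using 2) (so $\pr^{+}_{\bfx}(\sup_{j\ge k}|Y_j-Y_k|>\varepsilon)\to0$), controls $\sup_{k\le j\le n/2}|Y_j-Y_k|$. The remaining gap between $n/2$ and $n$ is the genuinely delicate part. I would handle it by a time-reversal / splitting argument as in \cite{Vatutin}, or else by first reducing to the case where $Y_n$ is replaced by $Y_{\lfloor n/2\rfloor}$; the uniform bound 3) absorbs the contribution of the exceptional sets.

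\textbf{Step 3: conclusion.} Combine the two steps using boundedness:
$$\bigl|\ex(Y_n\mid\tau>n)-\ex^{+}_{\bfx}Y_\infty\bigr|\le \ex(|Y_n-Y_k|\mid\tau>n)+\bigl|\ex(Y_k\mid\tau>n)-\ex^{+}_{\bfx}Y_k\bigr|+\ex^{+}_{\bfx}|Y_k-Y_\infty|.$$
The second term tends to $0$ as $n\to\infty$ by Step 1. The third tends to $0$ as $k\to\infty$ by 2), 3) and bounded convergence. The first is controlled by Step 2.
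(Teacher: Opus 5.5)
Your Step 1 is essentially the paper's first step: Markov property at time $k$, the asymptotics (\ref{e}), and domination by $C(1+|\ov{S}_k|^p)$, which is integrable because $p<2$. Step 3 is fine once Step 2 is available.

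\textbf{The gap is Step 2, and it is exactly the part you leave open.} Your comparison bound carries the factor $(n/(n-\ell))^{p/2}$, so it gives uniform control on $\mathcal{F}_\ell$ only for $\ell\le n/2$. This handles $\sup_{k\le j\le n/2}|Y_j-Y_k|$. But the statement concerns $Y_n$, whose index coincides with the conditioning horizon, and there your bound gives nothing.

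Neither suggested fix is an argument. ``Replacing $Y_n$ by $Y_{\lfloor n/2\rfloor}$'' requires $\ex\bigl(|Y_n-Y_{\lfloor n/2\rfloor}|\mid\tau(\ov{x})>n\bigr)\to0$, which is the original difficulty in the same form. The ``time-reversal/splitting'' route is not specified. Your treatment of the near-boundary contribution (``$\pr(\text{walk near }\partial K)$-type terms'') is also only sketched.

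The missing idea is to move the horizon away from the index, not the index away from the horizon: condition on $\{\tau(\ov{x})>qn\}$ with $q>1$. Conditioning on $\mathcal{F}_n$ and using (\ref{u}), (\ref{e}) gives
\[
\frac{\ex |Y_n-Y_k|\, I\{\tau(\ov{x})>qn\}}{m_{qn}(\ov{x})}\le C\Bigl(\frac{q}{q-1}\Bigr)^{p/2}\frac{\ex |Y_n-Y_k|\, I\{\tau(\ov{x})>n\}\, u\bigl(M(\ov{S}_n+\ov{x})+\ov{x}_0\bigr)}{V(M\ov{x})}.
\]
The right-hand side splits into two parts.
\begin{itemize}
\item Where $M(\ov{S}_n+\ov{x})$ is at distance at least $L$ from $\partial MK$, the ratio $u(\cdot+\ov{x}_0)/V(\cdot)$ is bounded by (\ref{VuEq2}) and (\ref{uNPart}). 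That part is therefore at most $C_q\,\ex_{\bfx}^{+}|Y_n-Y_k|$, which vanishes as $n\to\infty$ and then $k\to\infty$ by 2) and 3).
\item On the complementary event, survival up to $qn$ forces one coordinate of the walk to stay above a fixed level for $(q-1)n$ steps. This makes that part $o(m_{qn}(\ov{x}))$.
\end{itemize}
Finally, $\bigl|\ex Y_nI\{\tau(\ov{x})>n\}-\ex Y_nI\{\tau(\ov{x})>qn\}\bigr|\le C\,\pr(n<\tau(\ov{x})\le qn)$, and $\pr(n<\tau(\ov{x})\le qn)/m_n(\ov{x})\to1-q^{-p/2}$. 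Hence $\varlimsup_{n}\bigl|\ex(Y_n\mid\tau(\ov{x})>n)-\ex^{+}_{\bfx}Y_\infty\bigr|\le 3C(1-q^{-p/2})$, and letting $q\downarrow1$ finishes the proof.

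This is how the paper proceeds. Without it, or an equivalent device, your Step 2, and hence the theorem, is not established.
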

\begin{proof}
We fix natural numbers $k$ and $n$, $n > k$ and a vector $\ov{x} \in \mathcal{X}_V$.
The following equalities hold:
\begin{align*}
\ex \left( Y_k | \tau\left(\ov{x}\right) > n\right) &= \frac{\ex Y_k I\{\tau\left(\ov{x}\right) > n\}}{m_n\left(\ov{x}\right)}=
\frac{ \ex \left[ Y_k I\{\tau\left(\ov{x}\right) > k\} {\bf{E}} \left( I\{\ov{S}_j + \ov{x} > 0, \, j \in (k, n]\} \left.\right| \mathcal{F}_k \right) \right]}{m_n\left(\ov{x}\right)} =
\\&=\frac{ \ex \left[Y_k I\{\tau\left(\ov{x}\right) > k\} m_{n - k}\left(\ov{S}_k + \ov{x} \right)\right]}{m_n\left(\ov{x}\right)}.
\end{align*}
Let us rewrite asymptotic relation (\ref{e}) in terms of $m_n$:
\begin{equation*}
\label{simTh1}
m_n(\ov{x}) \sim \frac{\varkappa V\left(M\ov{x}\right)}{n^{p/2}}, \;\;\; n \to \infty.
\end{equation*}
Hence, the convergence
\begin{equation*}
\label{as}
\frac{m_{n-k} \left(\ov{S}_k + \ov{x}\right)}{m_n(\ov{x})} \to
\frac{V\left(M\left(\ov{S}_k + \ov{x}\right)\right)}{V\left(M \ov{x}\right)}, \quad n \to \infty
\end{equation*}
takes place. Moreover, taking into account inequalities (\ref{simpU}), (\ref{u}), uniform boundness of the sequence $\{Y_k\}$ and relation (\ref{e}), we get:
$$\left| \frac{Y_k I\{\tau(\ov{x}) > k\} m_{n-k}\left( \ov{S}_k +\ov{x}\right)}{m_n(\ov{x})}\right| \le C \frac{ m_{n-k}\left( \ov{S}_k +\ov{x}\right)}{m_n(\ov{x})}
\le C_1 \frac{\left|M\left(\ov{S}_k + \ov{x}\right) + \ov{x}_0\right|^p}{V\left(M\ov{x}\right)}.$$
Using the triangle inequality and the Minkowski inequality, we have:
\begin{equation*}
\label{mink}
\left(\ex \left|M\left(\ov{S}_k + \ov{x}\right) + \ov{x}_0\right|^p\right)^{1/p} \le \left(\ex \left|M\ov{S}_k\right|^{p}\right)^{1/p} + \left|M\ov{x}\right| +|\ov{x}_0| \le k \|M\| \left({\bf{E}}\left|\ov{X}\right|^p\right)^{1/p} + \left|M\ov{x}\right| + \left|\ov{x}_0\right|< \infty,
\end{equation*}
where $\ov{X}$ is a random vector distributed as the increment of the associated random walk $\left\{\ov{S}_n\right\}$.
Therefore, by Lebesque's dominated convergence theorem the relation
\begin{equation}
\label{TH1_eyk_lim}
\lim_{n \to \infty} \ex \left( Y_k | \tau(\ov{x}) > n\right) = \ex Y_k \frac{V\left( M(\ov{x} + \ov{S}_k)\right)}{V\left( M\ov{x}\right)} I\{\tau(\ov{x}) > k\} = {\bf{E}}_{\bfx}^{+}Y_k
\end{equation}
is valid.
\\ \ab Now we would like to prove that for every $q \in (1, +\infty)$ the equality
\begin{equation}
\label{Th1KL}
\lim_{k \to \infty} \varlimsup_{n \to \infty} {\bf{E}}\left. \left( \left|Y_n - Y_k\right| I\{M\left(\ov{S}_n + \ov{x}\right)\in K_L \} \right| \tau(\ov{x}) > qn \right) = 0
\end{equation}
holds. Let us denote $d(\ov{y}):= \mathrm{dist}(\ov{y}, \partial MK)$ for $\ov{y} \in MK$. By (\ref{VuEq2}) and (\ref{uNPart}) there exists a positive real number $L$ such that the inequality $d(\ov{y}) \ge L$ implies
\begin{equation}
\label{KLUneq}
\frac{u\left(\ov{y} + \ov{x}_0\right)}{V(\ov{y})} \le \frac{C_2 u(\ov{y})}{V(\ov{y})} \le 2C_2 =: C_3,
\end{equation}
where the parameter $C_3$ does not depend on $\ov{y}$. We fix $L$ and denote $K_{L}:= \{\ov{y} \in MK: d(\ov{y}) \ge L\}$, $U_L := MK \backslash K_L$. Using inequality (\ref{u}) and asymptotic relation (\ref{e}), for sufficiently large $n$ we have:
\begin{align}
&\ex \left. \left( \left| Y_n - Y_k\right| I\left\{M\left(\ov{S}_n + \ov{x}\right) \in K_L \right\} \right| \tau(\ov{x}) > qn \right) = \nonumber
\\ &=
\frac{1}{m_{qn}(\ov{x})}\ex \left[|Y_n - Y_k|m_{qn - n}(\ov{S}_n + \ov{x}) I \left\{\tau(\ov{x}) > n, M\left(\ov{S}_n + \ov{x}\right) \in K_L\right\}\right] \le \nonumber
\\ &\le
C_4 \left(\frac{q}{q-1}\right)^{p/2} \ex \left[ \left|Y_n - Y_k\right| I\left\{\tau(\ov{x}) > n, \, M\left(\ov{S}_n+\ov{x}\right) \in K_L\right\} \frac{u\left(M\left(\ov{S}_n + \ov{x}\right) +\ov{x}_0 \right)}{V(M\ov{x})}\right] = \nonumber
\\ & =
C_4 \left(\frac{q}{q-1}\right)^{p/2} \ex_{\bfx}^{+} \left[ \left|Y_n - Y_k\right| I\left\{M\left(\ov{S}_n + \ov{x}\right) \in K_L\right\}
\frac{u\left(M\left(\ov{S}_n + \ov{x}\right) + \ov{x}_0 \right)}{V\left(M\left(\ov{S}_n + \ov{x}\right)\right)}\right] \label{KLsoot}.
\end{align}
By inequality (\ref{KLUneq}), the estimate
$$\frac{u\left(M\left(\ov{S}_n + \ov{x}\right) + \ov{x}_0\right)}{V\left(M\left(\ov{S}_n + \ov{x}\right)\right)} \le C_3$$
holds on the event $\left\{M\left(\ov{S}_n + \ov{x}\right) \in K_L\right\}$.
From this estimate and relation (\ref{KLsoot}), we get
\begin{equation*}
\label{TH1KL_uneq}
{\bf{E}}\left. \left( \left|Y_n - Y_k\right| I\{M\left(\ov{S}_n + \ov{x}\right) \in K_L \} \right| \tau(\ov{x}) > qn \right) \le C_3 C_4 \ex_{\bfx}^{+} \left|Y_n - Y_k\right|.
\end{equation*}
\ab By uniform boundness and ${\bf{P}}_{\bfx}^{+}$-a.s. convergence of the sequence $\{Y_k\}$, we obtain
$$ \lim_{n \to \infty} {\bf{E}}_{\bfx}^{+} |Y_n - Y_k| = {\bf{E}}_{\bfx}^{+} |Y_{\infty} - Y_k|, \;\;\; \lim_{k \to \infty}{\bf{E}}_{\bfx}^{+} |Y_{\infty} - Y_k| = 0.$$
Therefore,
$$\varlimsup_{k \to \infty}\varlimsup_{n \to \infty}{\bf{E}}\left. \left( \left| Y_n - Y_k\right| I\{M\left(\ov{S}_n + \ov{x}\right) \in K_L \} \right| \tau(\ov{x}) > qn \right) \le C_3 C_4 \varlimsup_{k \to \infty} \varlimsup_{n \to \infty} \ex_{\bfx}^{+} |Y_n - Y_k| = 0.$$
This proves relation (\ref{Th1KL}).
\\ \ab Our next aim is to show that for every $q \in (1, +\infty)$ and every $k\in \mathbb{N}$ the equality
\begin{equation}
\label{Th1UL}
\lim_{n \to \infty} {\bf{E}}\left. \left( \left|Y_n - Y_k\right| I\left\{M\left(\ov{S}_n + \ov{x}\right)\in U_L \right\} \right|\tau(\ov{x}) > qn \right) = 0
\end{equation}
holds. If $\ov{z} \in M^{-1}U_L$, then from inequality (\ref{dist_eq_1}) the inequality $\mathrm{dist}(\ov{z}, \partial K) \le \|M^{-1}\| L$ follows.
Let
\begin{align*}
T: = \|M^{-1}\| L + 1, \quad D_L := \left\{ \ov{z} \in K: \mathrm{dist}(\ov{z}, \partial K) < T \right\},
\quad A_1 := \{\ov{z} \in K: z_1 \ge z_{2}\}, \quad A_2 := \ov{A_1}.
\end{align*}
Note that $M^{-1}U_L \subseteq D_L$. Since the sequence $\left\{Y_k\right\}$ is uniformly bounded, the inequality
\begin{equation}
\label{TH1UnUL}
\ex \left( \left|Y_n - Y_k\right| I\{M \left(\ov{S}_n + \ov{x}\right) \in U_L, \tau(\ov{x}) > qn\} \right) \le 2C \pr \left( \tau(\ov{x}) > qn, \ov{S}_n + \ov{x} \in M^{-1}U_L\right)
\end{equation}
holds. Furthermore,
\begin{align*}
{\bf{P}}\left( \tau(\ov{x}) > qn, \ov{S}_n + \ov{x} \in M^{-1}U_L\right) &\le \int\limits_{D_L} {\bf{P}} \left( \tau(\ov{x}) > qn, \ov{S}_n + \ov{x} \in d\ov{y}\right) = 
\\ &= \int\limits_{D_L} {\bf{P}} \left( \tau(\ov{x}) > n, \ov{S}_n + \ov{x} \in d\ov{y}\right) {\bf{P}} \left( \tau(\ov{y}) > qn -n\right). 
\end{align*}
We split the last integral into two parts, which are correspond to integration regions $D_{L} \cap A_i$, $i = 1, 2$.
For every $i \in \{1, 2\}$ we have:
\begin{align*}
&\int\limits_{D_L \cap A_i} \pr \left( \tau(\ov{x}) > n, \ov{x} + \ov{S}_n \in d\ov{y}\right) \pr \left( \tau(\ov{y}) > qn-n\right) \le
\\&\le {\bf{P}}\left( \tau(\ov{x}) > n, \ov{x} + \ov{S}_n \in D_L \cap A_i\right) {\bf{P}} \left( S_j^{(3 - i)} > -T, j \le qn - n\right) \le
\\ &\le {\bf{P}}\left( \tau(\ov{x}) > n\right) {\bf{P}} \left( S_j^{(3-i)} > -T, j \le qn - n \right).
\end{align*}
Hence, taking into account inequality (\ref{TH1UnUL}), we get the following estimate:
\begin{align*}
& \ex \left( \left|Y_n - Y_k\right| I \left\{M(\ov{x} + \ov{S}_n) \in U_L, \tau(\ov{x}) > qn\right\} \right) \le
\\&\le 2C \pr\left( \tau(\ov{x}) > n\right) \left( \pr \left( S_j^{(1)} > -T, j \le qn - n\right) + \pr \left( S_j^{(2)} > -T, j \le qn - n\right)\right).
\end{align*}
Thus, (\ref{Th1UL}) follows from the relations
\begin{align*}
\lim_{n \to \infty} \frac{{\bf{P}}\left( \tau(\ov{x}) > n\right)}{\pr \left( \tau(\ov{x}) > qn\right)} = q^{p/2}, \quad \lim_{n \to \infty}{\bf{P}} \left( S_j^{(i)} > -T, j \le qn - n\right) = 0, \quad i = 1, 2.
\end{align*}
\ab From (\ref{Th1KL}) and (\ref{Th1UL}) we get
\begin{equation}
\label{limTH1}
\lim_{k \to \infty} \varlimsup_{n \to \infty} \left|{\bf{E}}\left. \left( Y_n - Y_k \right| \tau(\ov{x}) > qn \right) \right| = 0
\end{equation}
for every $q \in (1, +\infty)$. We fix $q > 1$ and $k < n$. Using the triangle inequality twice, we get
\begin{align}
&m_n(\ov{x}) \left| \ex \left( Y_n \left.\right| \tau (\ov{x}) > n \right) - \ex_{\bfx}^{+} Y_{\infty} \right| \le \left|\ex Y_n I \left\{\tau(\ov{x})> qn\right\} - \pr \left(\tau(\ov{x}) > qn\right)\ex_{\bfx}^{+} Y_{\infty}\right| + \nonumber
\\&+ \left|\ex Y_nI\{ n < \tau(\ov{x})\le qn\}\right| + \pr \left(n < \tau(\ov{x}) \le qn\right) \left|\ex_{\bfx}^{+} Y_{\infty}\right|. \label{fin0}
\end{align}
The estimate
\begin{align*}
&\left|\ex Y_n I \left\{\tau(\ov{x})> qn\right\} - \pr \left(\tau(\ov{x}) > qn\right)\ex_{\bfx}^{+} Y_{\infty}\right| \le
\\ &\le m_{qn}(\ov{x})\left. \left| \ex\left(Y_n - Y_k \right| \tau(\ov{x})> qn \right) \right| + m_{qn}(\ov{x}) \left. \left| \ex\left(Y_k \right| \tau(\ov{x}) > qn \right) - \ex_{\bfx}^{+}Y_{\infty} \right|
\end{align*}
holds.
Hence, taking into account (\ref{TH1_eyk_lim}), (\ref{limTH1}), we get
\begin{align*}
&\varlimsup_{n \to \infty}\frac{1}{m_n(\ov{x})}\left|\ex Y_n I \left\{\tau(\ov{x})> qn\right\} - \pr \left(\tau(\ov{x}) > qn\right)\ex_{\bfx}^{+} Y_{\infty}\right| \le
\\ &\le \lim_{k \to \infty} \varlimsup_{n \to \infty} \frac{m_{qn}(\ov{x})}{m_n(\ov{x})} \left|{\bf{E}}\left. \left( Y_n - Y_k \right| \tau(\ov{x}) > qn \right) \right| + \lim_{k \to \infty} \varlimsup_{n \to \infty} \frac{m_{qn}(\ov{x})}{m_n(\ov{x})}\left. \left| \ex\left(Y_k \right| \tau(\ov{x}) > qn \right) - \ex_{\bfx}^{+}Y_{\infty} \right| =0.
\end{align*}
Thus,
\begin{equation}
\label{fin1}
\lim_{n \to \infty}\frac{1}{m_n(\ov{x})}\left|\ex Y_n I \left\{\tau(\ov{x})> qn\right\} - \pr \left(\tau(\ov{x}) > qn\right)\ex_{\bfx}^{+} Y_{\infty}\right| = 0.
\end{equation}
The inequalities
\begin{align}
\left|\ex Y_nI\{ n < \tau(\ov{x})\le qn\}\right| \le C \pr\left(n < \tau(\ov{x}) \le qn\right), \quad \left|\ex_{\bfx}^{+} Y_{\infty}\right| \le 2C \label{fin2}
\end{align}
take place. Note that
$$\lim_{n \to \infty} \frac{\pr \left( n < \tau(\ov{x}) \le qn \right)}{m_n(\ov{x})} = \lim_{n \to \infty} \left(1 - \frac{m_{qn}(\ov{x})}{m_n(\ov{x})}\right) = 1 - q^{-p/2}.$$
Hence, from relations (\ref{fin0}), (\ref{fin1}), (\ref{fin2}) it follows that for every $q > 1$
\begin{equation*}
\label{fin}
\varlimsup_{n \to \infty} \left| {\bf{E}} \left( Y_n \left.\right| \tau(\ov{x}) > n \right) - {\bf{E}}_{x}^{+} Y_{\infty} \right| \le 3C \left(1 - q^{-p/2}\right).
\end{equation*}
Tending $q \downarrow 1$ in the right-hand side of the last inequality, we prove Theorem \ref{th_plus}.
\end{proof}
\section{Proof of Theorem \ref{main_nogeom}}
\label{proof_main_nogeom}
To prove Theorem \ref{main_nogeom} we need the following auxiliary statements.
\begin{lemma}
\label{mainlemma}
Let sequences $\{a_n\}_{n \in \mathbb{N}}$, $\left\{\delta_{n, m}\right\}_{n, m \in \mathbb{N}}$, $\left\{b_{n, m}\right\}_{n, m \in \mathbb{N}}$ be such that the equality $a_n = \delta_{n, m} + b_{n, m}$ takes place for every natural $n, m$. Assume that the following conditions are valid:
\begin{enumerate}
\item[1)] $\delta_{n, m} \ge 0$, $b_{n, m} \ge 0$ for $n$, $m \in \mathbb{N}$;
\item[2)] $\displaystyle \varliminf\limits_{m \to \infty} \varlimsup\limits_{n \to \infty} \delta_{n, m} = 0$;
\item[3)] for every natural $m$ there exists the finite limit $\displaystyle \lim_{n \to \infty} b_{n, m} =: b_{m}$.
\end{enumerate}
Then $a:= \displaystyle \lim_{n \to \infty} a_n = \lim_{m \to \infty} b_{m}< \infty.$ Moreover, if $b_m > 0$ for some $m$, then $a > 0$.
\end{lemma}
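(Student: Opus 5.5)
The plan is to squeeze $\{a_n\}$ between two bounds that use only nonnegativity and the decomposition $a_n = \delta_{n,m} + b_{n,m}$, and then to identify the limit of $a_n$ through the numbers $b_m$.

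\emph{Lower bound.} Fix $m$. Since $\delta_{n,m}\ge 0$, we have $a_n \ge b_{n,m}$ for every $n$. Letting $n\to\infty$ and using condition 3) gives
$$\varliminf_{n\to\infty} a_n \ge b_m \quad\text{for every } m\in\mathbb{N}.$$
This step is trivial but will be used twice: once for existence of the limit and once for positivity.

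\emph{Upper bound.} Again fix $m$. Taking $\varlimsup_n$ in $a_n=\delta_{n,m}+b_{n,m}$ and using that $b_{n,m}\to b_m$ gives
$$\varlimsup_{n\to\infty} a_n \le \varlimsup_{n\to\infty}\delta_{n,m} + b_m .$$
By condition 2) there is a subsequence $m_j\to\infty$ with $\varepsilon_j:=\varlimsup_n \delta_{n,m_j}\to 0$. Combining the two bounds along this subsequence yields
$$b_{m_j}\le \varliminf_n a_n \le \varlimsup_n a_n \le \varepsilon_j + b_{m_j}.$$
Hence $\varlimsup_n a_n-\varliminf_n a_n\le \varepsilon_j\to 0$, so $a=\lim_n a_n$ exists. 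It is finite because $a\le \varepsilon_1+b_{m_1}<\infty$, and $b_{m_j}\to a$.

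\emph{Identification with the $b_m$.} From the lower bound, $b_m\le a$ for all $m$, and $b_{m_j}\to a$. Therefore $a=\sup_m b_m=\varlimsup_{m\to\infty} b_m$, which is how I will read ``$\lim_{m\to\infty} b_m$'' in the statement.

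\emph{Main obstacle.} This is the only delicate point. Under condition 2) as written, with $\varliminf$, the full sequence $\{b_m\}$ need not converge. For example, take $\delta_{n,m}=1$ and $b_{n,m}=0$ for odd $m$, and $\delta_{n,m}=0$ and $b_{n,m}=1$ for even $m$; then $a_n\equiv 1$ but $b_m$ alternates between $0$ and $1$. So I would state the conclusion as $a=\lim_j b_{m_j}=\sup_m b_m$. I would add the remark that if condition 2) holds with $\lim_m$ in place of $\varliminf_m$ (the situation in which the lemma is applied), then the upper bound gives $a-\varlimsup_n\delta_{n,m}\le b_m\le a$ for every $m$, and hence $b_m\to a$ along the whole sequence.

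\emph{Positivity.} If $b_m>0$ for some $m$, the lower bound gives $a\ge b_m>0$ immediately.
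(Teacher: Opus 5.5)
Your proof is correct. It rests on the same two bounds as the paper's proof: $b_m\le\varliminf_n a_n$ from $\delta_{n,m}\ge 0$, and $\varlimsup_n a_n\le\varlimsup_n\delta_{n,m}+b_m$ for each fixed $m$.

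The difference is in how the upper bound is combined over $m$, and your version is the sound one. The paper takes $\varliminf_m$ of the two terms separately and writes
\[
\varlimsup_{n\to\infty} a_n\le \varliminf_{m\to\infty}\varlimsup_{n\to\infty}\delta_{n,m}+\varliminf_{m\to\infty} b_m=\varliminf_{m\to\infty} b_m .
\]
This would need $\varliminf_m(x_m+y_m)\le\varliminf_m x_m+\varliminf_m y_m$. In general only $\varliminf_m(x_m+y_m)\le\varliminf_m x_m+\varlimsup_m y_m$ holds, because $\varliminf$ is superadditive, not subadditive.

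Your alternating example ($a_n\equiv 1$, with $b_m$ alternating between $0$ and $1$) satisfies 1)--3). It shows both that this step fails and that the conclusion $a=\lim_m b_m$ is false as stated. What survives is exactly what you prove: $a=\lim_n a_n$ exists, is finite, equals $\sup_m b_m=\varlimsup_m b_m$, and is positive if some $b_m>0$. Passing to a subsequence $m_j$ that realizes the $\varliminf$ in condition 2) is the right repair.

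Your closing remark also fits the application. In the proof of Theorem~\ref{main_nogeom} the paper actually establishes $\lim_{t\to\infty}\varlimsup_n\delta_n(\ov{t})=0$, a full limit, and under that hypothesis $b_m\to a$ along the whole sequence. In any case, Theorem~\ref{main_nogeom} only uses existence, finiteness and positivity of $a$, which your argument delivers.

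One cosmetic point: $\varlimsup_n\delta_{n,m}$ may equal $+\infty$ for some $m$. So when you write $a\le\varepsilon_1+b_{m_1}<\infty$, choose the subsequence so that every $\varepsilon_j$ is finite. This is possible because $\varepsilon_j\to 0$.
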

\begin{lemma}
\label{smallNuneq}
Let $\left\{\ov{Z}_n\right\}$ be a PBPJRE. Assume that the associated random walk satisfies the condition (C). Then for every $\varepsilon > 0$ there exists a positive number $T$ such that for any $t > T$  and any $n \in \mathbb{N}$ the inequality
$${\bf P}\left( \ov{Z}_n > 0, \, \tau(\ov{t}) \le n\right) \le \frac{\varepsilon}{n^{p/2}}$$
holds, where $\ov{t} = (t, t).$
\end{lemma}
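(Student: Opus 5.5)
We prove Lemma \ref{smallNuneq} by splitting according to which coordinate of the walk first leaves the shifted quadrant, and by the value of the running minimum at that time. On $\{\tau(\ov{t}) \le n\}$ there is a first time $k \le n$ and an index $l \in \{1,2\}$ with $t + S_k^{(l)} \le 0$. Conditionally on the environment, the one-dimensional bound (first moment, fractional-linear comparison) gives
$$\prf\left(Z_n^{(1)}>0,\ Z_n^{(2)}>0\right) \le \min_{j\le n} e^{S_j^{(1)}} \wedge \min_{j \le n} e^{S_j^{(2)}}.$$
For a sharper version we use the standard estimate
$$\prf\left(Z_n^{(l)}>0\right) \le \Bigl(\sum_{k=0}^{n-1} \xi_k^{(l)} e^{-S_k^{(l)}}\Bigr)^{-1},$$
but first moments will essentially suffice. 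We therefore bound
$$\pr\left(\ov{Z}_n>0,\ \tau(\ov{t})\le n\right) \le \ex\left[ e^{\min(\underline{S}_n^{(1)},\, \underline{S}_n^{(2)})} \wedge 1 ;\ \tau(\ov{t}) \le n\right],$$
where $\underline{S}_n^{(l)} = \min_{j\le n} S_j^{(l)}$.

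Next we decompose according to the level reached. Let $A_m = \{-t-m-1 < \min(\underline{S}^{(1)}_n, \underline{S}^{(2)}_n) \le -t-m\}$ for $m \ge 0$. On $A_m$ the integrand is at most $e^{-t-m}$. Moreover $A_m \subseteq \{\tau(\ov{t}+\ov{m}+\ov{1}) > n\}$, where $\ov{m}=(m,m)$, $\ov{1}=(1,1)$, since both components stay above $-(t+m+1)$. By Theorem \ref{den_2D}(2) and (\ref{simpU}),
$$\pr(A_m) \le C\frac{u\left(M(\ov{t}+\ov{m}+\ov{1})+\ov{x}_0\right)}{n^{p/2}} \le C'\frac{(t+m+2)^p}{n^{p/2}}.$$
Summing over $m$ gives
$$\pr\left(\ov{Z}_n>0,\ \tau(\ov{t})\le n\right) \le \frac{C'}{n^{p/2}}\sum_{m\ge 0} e^{-t-m}(t+m+2)^p \le \frac{C'' e^{-t}(t+2)^p}{n^{p/2}},$$
which is below $\varepsilon n^{-p/2}$ once $t > T(\varepsilon)$, uniformly in $n$. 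The event $\tau(\ov{t}) \le n$ is used exactly to guarantee that the minimum is at most $-t$, so $m$ starts at $0$.

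The main obstacle is justifying the conditional bound. The crude estimate $\prf(Z_n^{(l)}>0) \le \min_{j\le n} e^{S_j^{(l)}}$ follows from Markov's inequality, $\prf(Z_j^{(l)}>0)\le \exf Z_j^{(l)} = e^{S_j^{(l)}}$, together with monotonicity in $j$. Conditional independence of the two populations given $\bol{f}$ then yields the product bound, which is dominated by the minimum. So this step is actually elementary, and the moment condition on $\xi$ is not needed here. The real technical point is the uniformity in Theorem \ref{den_2D}(2): the constant $C$ and $\ov{x}_0$ do not depend on the starting point, so the polynomial growth in $m$ is beaten by $e^{-m}$. If the walk starts at $\ov{0}$ rather than at $\ov{t}+\ov{m}+\ov{1}$, we apply that bound to the shifted start, which is legitimate since shifting the starting point by $(t+m+1)(1,1)$ turns $A_m$ into staying in $K$.
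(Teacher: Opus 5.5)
Your proof is correct and follows the paper's approach. Both arguments bound $\prf(\ov{Z}_n>0)$ by the exponential of the running minimum using the first-moment estimate. Both then slice $\{\tau(\ov{t})\le n\}$ by the level of that minimum: the paper uses the slices $\{\tau(m\ov{t})\le n<\tau((m+1)\ov{t})\}$, you use the unit slices $A_m$. Both then beat the polynomial growth in the uniform bound (\ref{u}) with the exponential factor.

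One point to fix: your unused aside $\prf(Z_n^{(l)}>0)\le\bigl(\sum_k\xi_k^{(l)}e^{-S_k^{(l)}}\bigr)^{-1}$ has the inequality in the wrong direction, since the $\xi$-estimate (\ref{prf_un_nogeom}) is a lower bound. Simply drop it.
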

\begin{lemma}
\label{lemma7}
Under the conditions of Theorem \ref{main_nogeom} for every $\ov{x} \in \mathcal{X}_V$ there exists finite limit
\begin{equation*}
\label{lemma7lim}
\lim_{n \to \infty} \left.\pr \left( \ov{Z}_n > 0 \right.|\tau(\ov{x}) > n \right):= r(\ov{x}).
\end{equation*}
\end{lemma}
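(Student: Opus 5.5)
We derive the existence of $r(\ov{x})$ from Theorem \ref{th_plus}, applied to the quenched non-extinction probabilities. Put
$$Y_n := \prf\left(\ov{Z}_n > 0\right) = \left(1 - f_{0,n}^{(1)}(0)\right)\left(1 - f_{0,n}^{(2)}(0)\right), \qquad f_{0,n}^{(l)} := f_1^{(l)}\circ\cdots\circ f_n^{(l)}.$$
The factorization comes from the conditional independence of the two populations given $\bol{f}$. By conditioning on the environment, and since $\{\tau(\ov{x})>n\}\in\mathcal{F}_n$,
$$\pr\left(\ov{Z}_n>0 \,|\, \tau(\ov{x})>n\right) = \ex\left(Y_n \,|\, \tau(\ov{x})>n\right).$$

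Conditions 1) and 3) of Theorem \ref{th_plus} are immediate: $Y_n$ is $\mathcal{F}_n$-measurable and $0\le Y_n\le 1$. It remains to check condition 2), i.e. that $Y_n$ converges $\pr^{+}_{\bfx}$-a.s.

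Each factor $1-f_{0,n}^{(l)}(0)=\prf(Z_n^{(l)}>0)$ is nonincreasing in $n$, because extinction is absorbing. Hence each factor converges, and so does their product $Y_n$, to some limit $Y_\infty\in[0,1]$. This monotone convergence holds for every realization of the environment, so in particular $\pr^{+}_{\bfx}$-a.s., with no further work.

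Theorem \ref{th_plus} therefore gives
$$\lim_{n\to\infty}\pr\left(\ov{Z}_n>0 \,|\, \tau(\ov{x})>n\right) = \ex^{+}_{\bfx} Y_\infty =: r(\ov{x}) \in [0,1],$$
so the limit exists and is finite.

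Note that the moment assumption on $\xi^{(l)}$ is not needed for existence. It will matter only for the positivity $r(\ov{x})>0$, used later through Lemma \ref{mainlemma}. For that one would use the standard bound
$$\prf\left(Z_n^{(l)}>0\right) \ge \left(e^{-S_n^{(l)}} + \sum_{k<n} \xi_k^{(l)} e^{-S_k^{(l)}}\right)^{-1},$$
together with the fact that under $\pr^{+}_{\bfx}$ the walk $\ov{S}_n$ drifts to infinity in both coordinates fast enough that $\sum_k \xi_k^{(l)} e^{-S_k^{(l)}}<\infty$ a.s.

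The main obstacle is only to justify the applicability of Theorem \ref{th_plus}: one needs $\ov{x}\in\mathcal{X}_V$, which is assumed, and the moment conditions used in its proof, which hold by condition (C) since $p<2$. Given that, the argument is short.
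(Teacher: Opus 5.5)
Your proof is correct and follows the paper's argument: apply Theorem \ref{th_plus} to $Y_n=\prf(\ov{Z}_n>0)$, which is $\mathcal{F}_n$-measurable, bounded by $1$, and convergent — the paper just calls the convergence obvious, and your monotonicity argument supplies the justification. The paper's proof also goes on to show $r(\ov{x})>0$, which is needed in the proof of Theorem \ref{main_nogeom}; it does this via (\ref{prf_un_nogeom}), Lemma \ref{bor_xi} and Corollary \ref{granitsa_pr_plus}, the route your closing sketch outlines.
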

For the convenience of the reader, proofs of Lemmas \ref{mainlemma}, \ref{smallNuneq}, \ref{lemma7} are moved into Section \ref{vspomog}.
\begin{proof}[Proof of Theorem \ref{main_nogeom}]
We choose a vector $\ov{t} = (t, t)$ sufficiently large to satisfy the inequality $V\left(M\ov{t}\right) > 0$. In other words, $\ov{t} \in \mathcal{X}_V$. Let us denote
\begin{align*}
&\delta_n(\ov{t}) = n^{p/2} \pr \left(\ov{Z}_n > 0, \tau(\ov{t}) \le n\right);
\\ &b_n(\ov{t}) = \pr\left(\ov{Z}_n > 0| \tau(\ov{t}) > n\right) \cdot n^{p/2} \pr(\tau(\ov{t}) > n).
\end{align*}
By Lemma \ref{smallNuneq} for every $\varepsilon > 0$ there exists a positive parameter $t_0$ such that for any $t > t_0$ the inequality $\displaystyle \varlimsup_{n \to \infty} \delta_n(\ov{t}) < \varepsilon$ holds. Hence,
$$\lim_{t \to \infty} \varlimsup_{n \to \infty} \delta_n(\ov{t}) = 0.$$
From asymptotic relation (\ref{e}) and Lemma \ref{lemma7} it follows that there exists finite limit
$$\lim_{n \to \infty} b_n(\ov{t}) = \varkappa r(\ov{t}) V\left(M\ov{t}\right) =: b(\ov{t}).$$
The function $b(\ov{t})$ is positive for sufficiently large $t$. Using the expression
$$n^{p/2}\pr\left(\ov{Z}_n > 0\right) = \delta_n(\ov{t}) + b_n(\ov{t})$$
and Lemma \ref{mainlemma}, we conclude existence, finiteness and positivity of the limit
$$\lim_{n \to \infty} n^{p/2}\pr\left(\ov{Z}_n > 0\right).$$
Theorem \ref{main_nogeom} is proved.
\end{proof}
\section{Auxiliary statements}
\label{vspomog}
\subsection{Proofs of Lemmas \ref{mainlemma} and \ref{smallNuneq} }
\begin{proof}[Proof of Lemma \ref{mainlemma}]
Note that the upper limit of the sequence $\{a_n\}$ can not be equal to $+\infty$. Indeed, there exists a natural number $m$ such that $\varlimsup_{n \to \infty} \delta_{n, m} < +\infty$. Thus,
$$\varlimsup_{n \to \infty} a_n \le \varlimsup_{n \to \infty} \delta_{n, m} + \lim_{n \to \infty} b_{n, m} = \varlimsup_{n \to \infty} \delta_{n, m} + b_m < +\infty.$$
By the inequality $\delta_{n, m} \ge 0$ for $n, m \in \mathbb{N}$, for every $m$ we have:
\begin{equation*}
\label{b_and_varliminf_a}
b_{m} \le \varliminf_{n \to \infty} a_n.
\end{equation*}
From the last inequality it follows that the upper limit of the sequence $\{b_m\}$ is finite. Trivially, if there exists a natural $m$ such that $b_m > 0$, then the lower limit of the sequence $\{a_n\}$ is positive.
Furthermore,
$$\varlimsup_{m \to \infty} b_m \le \varliminf_{n \to \infty} a_n \le \varlimsup_{n \to \infty} a_n \le \varliminf_{m \to \infty} \varlimsup_{n \to \infty} \delta_{n, m} + \varliminf_{m \to \infty} b_m = \varliminf_{m \to \infty} b_m.$$
Hence,
$$\varliminf_{n \to \infty} a_n = \varlimsup_{n \to \infty} a_n = \varlimsup_{m \to \infty} b_m = \varliminf_{m \to \infty} b_m.$$
Lemma \ref{mainlemma} is proved.
\end{proof}
\begin{proof}[Proof of Lemma \ref{smallNuneq}]
\label{first_lemma_uneq_2}
We need the following estimate (see, e.g. \cite{Afan}, p. 161)
\begin{equation}
\label{survVPSS}
\prf \left( Z_n^{(i)} > 0\right) \le \mathrm{exp}\left(\min_{0\le j\le n}S_j^{(i)}\right), \quad i = 1, 2.
\end{equation}
We fix $\ov{t} = (t, t)$, $t > 0$ and natural $n$. Note that
$$\left\{\tau(\ov{t}) \le n\right\} = \bigsqcup_{m = 1}^{\infty} \left\{\tau \left( \left( m+1\right) \ov{t}\right) > n, \, \tau \left(m\ov{t}\right) \le n \right\}.$$
Thus,
\begin{equation}
\label{F_form}
{\bf P} \left(\ov{Z}_n > 0, \, \tau(\ov{t}) \le n\right) =
\sum_{m = 1}^{\infty} {\bf{P}} \left(\ov{Z}_n > 0,\, \tau \left( \left( m+1\right) \ov{t}\right) > n, \, \tau \left(m\ov{t}\right) \le n \right).
\end{equation}
By the properties of conditional expectation, for every $m \in \mathbb{N}$ we get:
\begin{align*}
\pr \left(\ov{Z}_n > 0,\, \tau \left(\left(m+1\right) \ov{t}\right) > n, \,\tau\left(m\ov{t}\right) \le n \right) &= \ex \exf I \left \{\ov{Z}_n > 0,\, \tau \left( \left(m+1\right) \ov{t}\right) > n, \, \tau \left(m\ov{t}\right) \le n \right\} =
\\ &= \ex \left[ \prf \left( \ov{Z}_n > 0\right) I\left\{\tau \left( \left(m+1\right) \ov{t}\right) > n, \, \tau\left(m\ov{t}\right) \le n\right \}\right].
\end{align*}
Taking into account (\ref{survVPSS}), we have:
\begin{align*}
& \prf \left( \ov{Z}_n > 0\right) = \prf \left( Z_n^{(1)} > 0\right) \prf \left( Z_n^{(2)} > 0\right) \le
\mathrm{exp}\left( \min_{\text{$j \le n$}} S_j^{(1)} + \min_{\text{$j \le n$}} S_j^{(2)}\right).
\end{align*}
Therefore,
\begin{align*}
&
\pr \left(\ov{Z}_n > 0,\, \tau \left(\left(m+1\right) \ov{t}\right) > n, \, \tau\left(m\ov{t}\right) \le n \right) =
\ex \left[ \prf \left( \ov{Z}_n > 0\right) I \left \{\tau \left(\left(m+1\right) \ov{t}\right) > n, \, \tau\left(m\ov{t}\right) \le n\right\} \right] \le
\\
& \le \ex \, \mathrm{exp}\left( \min_{\text{$j \le n$}} S_j^{(1)} + \min_{\text{$j \le n$}} S_j^{(2)}\right) I \left\{\tau \left(\left(m+1\right) \ov{t}\right) > n, \, \tau\left(m\ov{t}\right) \le n\right\} \le
\\
&\le \mathrm{exp}(-mt) \pr \left( \tau\left(\left(m+1\right) \ov{t}\right) > n, \, \tau\left(m\ov{t}\right) \le n \right)
\le \mathrm{exp}(-mt) \pr \left( \tau\left(\left(m+1\right) \ov{t}\right) > n \right).
\end{align*}
Thus, the inequality
\begin{equation}
\label{first_lemma_uneq_1}
\pr \left(\ov{Z}_n > 0,\, \tau\left(\left(m+1\right) \ov{t}\right) > n, \, \tau\left(m\ov{t}\right) \le n \right) \le \mathrm{exp}(-mt) {\bf{P}} \left( \tau\left(\left(m+1\right) \ov{t}\right) > n \right)
\end{equation}
holds. From (\ref{simpU}) and (\ref{u}) it follows that
\begin{equation}
\label{first_lemma_uneq_2}
\pr \left( \tau\left(\left(m+1\right) \ov{t}\right) > n \right) \le C\frac{u\left( (m+1)M\ov{t} + \ov{x}_0\right)}{n^{p/2}}
\le \widehat{C} \frac{\left(m+1\right)^p t^p}{n^{p/2}}.
\end{equation}
Note that $e^{-t}t^p$ is the monotonically decreasing function for $t >p$. Let $t \ge T > p$, then for any natural number $m$ the estimate
$$e^{-mt} t^{p} \le e^{-mT} T^m$$
takes place. Hence, taking into equality (\ref{F_form}) and inequalities (\ref{first_lemma_uneq_1}), (\ref{first_lemma_uneq_2}) for $t \ge T > p$, we get the following relation
\begin{equation*}
\label{first_lemma_main_en_1}
{\bf P}\left( \ov{Z}_n > 0, \, \tau(\ov{t}) \le n\right) \le \widehat{C}\frac{T^p e ^{-T}}{n^{p/2}}\sum_{m = 0}^{\infty} e^{-mT} \left(m+2\right)^p \le \widehat{C}\frac{T^p e ^{-T}}{n^{p/2}}\sum_{m = 0}^{\infty} e^{-mp} \left(m+2\right)^p.
\end{equation*}
The series in the right-hand side converges by Cauchy root test. We fix $\varepsilon > 0$ and take sufficiently large $T$ to satisfy the inequality
$$\widehat{C}T^p \mathrm{exp}\left(-T\right) \sum_{m = 0}^{\infty} e^{-mp} \left(m+2\right)^p < \varepsilon.$$
Lemma \ref{smallNuneq} is proved.
\end{proof}
\subsection{Auxiliary facts about random walks}
Proof of Lemma \ref{lemma7} is much more complicated than those of Lemmas \ref{mainlemma} and \ref{smallNuneq}.
We need some auxiliary statements to prove Lemma \ref{lemma7}.
\begin{prop}
\label{lnn}
Let $\{S_n\}$ be a random walk with increments $X_j$, $\ex X_j = 0$, $\ex X_j^2 = \sigma^2 \in (0, +\infty)$. Then for any $\alpha > 0$ there exists a parameter $C$ such that for every sufficiently large natural $n$ the inequality
\begin{equation*}
\label{lnUneq}
\pr \left( S_j < \alpha \ln n, j \le n \right) \le \frac{C\ln n}{\sqrt n}
\end{equation*}
holds.
\end{prop}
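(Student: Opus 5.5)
Writing $M_n:=\max_{j\le n}S_j$, the event $\{S_j<\alpha\ln n,\ j\le n\}$ is exactly $\{M_n<\alpha\ln n\}$. The plan is to reduce to the classical bound for staying below a fixed level and then use that this bound grows linearly in the level. The one-dimensional fact I rely on is standard fluctuation theory for a centred walk with finite variance, and it is of the same nature as the estimate behind relation (\ref{1D}):
\[
\pr\left(S_j < x,\ j\le n\right)\le C_0\,\frac{1+x}{\sqrt n},\qquad x\ge 0,\ n\in\mathbb{N},
\]
with $C_0$ independent of $x$ and $n$. Applying it with $x=\alpha\ln n$ gives a bound $C_0(1+\alpha\ln n)/\sqrt n\le C\ln n/\sqrt n$ for all $n\ge 3$, which is the claim.

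To justify the displayed bound, let $\tau_x=\inf\{j\ge1: S_j\ge x\}$. Then $\pr(S_j<x,\ j\le n)=\pr(\tau_x>n)$. First I use the renewal function of strict ascending ladder heights, $H(x)=\sum_{k\ge0}\pr(\chi_1+\dots+\chi_k<x)$. Since $\ex\chi_1<\infty$ under finite variance, the elementary renewal theorem gives $H(x)\le c(1+x)$. I then decompose $\{\tau_x>n\}$ according to the last strict ladder epoch before $n$. If the $k$-th ladder epoch $T_k\le n$ has height $\chi_1+\dots+\chi_k<x$ and no further ladder epoch occurs in $(T_k,n]$, the Markov property at $T_k$ factors out $\pr(\tau_0>n-T_k)$, which is the one-level probability. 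Summing over $k$ and $T_k$ yields
\[
\pr(\tau_x>n)\le \sum_{k\ge0}\ex\left[I\{\chi_1+\dots+\chi_k<x,\ T_k\le n\}\,\pr(\tau_0>n-T_k)\right].
\]
Using $\pr(\tau_0>m)\sim c/\sqrt m$ (Sparre Andersen plus finite variance) is not directly enough, because $T_k$ can be close to $n$.

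This is the main obstacle, and I would handle it by splitting at $n/2$. On $\{T_k\le n/2\}$ the factor is at most $\pr(\tau_0>n/2)\le c'/\sqrt n$, and summing the remaining indicators over $k$ gives at most $H(x)\le c(1+x)$. The contribution of $\{T_k>n/2\}$ is bounded by $\pr(\tau_x>n/2,\ \text{some ladder epoch in }(n/2,n])$. For this piece I would use time reversal (the duality lemma) on the second half: the walk must stay below $x$ on $[0,n/2]$, which costs $O((1+x)/\sqrt n)$ by induction on dyadic scales, with the second half bounded by a probability of order one. Alternatively, I would simply cite the well-known bound $\pr(\tau_x>n)\sim \frac{c\,U(x)}{\sqrt n}$ together with the uniform estimate $\pr(\tau_x>n)\le C\,U(x)/\sqrt n$, where $U(x)\le c(1+x)$ is the renewal function (see e.g. Kozlov's or Doney's estimates). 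The latter is the route I would actually take in the paper, since it makes the proposition immediate.
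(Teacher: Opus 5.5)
Your proposal is correct and follows essentially the same route as the paper. The paper gives no argument of its own and simply defers to a standard one-dimensional fluctuation estimate (Lemma 4.4 of \cite{Vatutin}); your reduction, plugging $x=\alpha\ln n$ into the uniform bound $\pr(\max_{j\le n}S_j<x)\le C_0(1+x)/\sqrt n$ (valid since the ladder-height renewal function is $O(1+x)$ under finite variance), is how such a citation yields the claim.

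The self-contained sketch you offer for the part $\{T_k>n/2\}$ does not close as written. Bounding it by $O(1)\cdot\pr(\tau_x>n/2)$ gives no contraction in a dyadic induction. The missing gain comes from duality plus the concentration inequality, which give
$\pr(j\text{ is a strict ladder epoch},\,S_j<x)\le \pr(S_i>0,\,i\le j/2)\,\sup_y\pr(S_{j/2}\in(y,y+x))=O((1+x)/j)$.
Summing this against $\pr(S_i\le 0,\,i\le n-j)=O((n-j+1)^{-1/2})$ over $j\in(n/2,n]$ gives $O((1+x)/\sqrt n)$.
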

\begin{proof}
See \cite{Vatutin}, Lemma 4.4.
\end{proof}
\begin{prop}[\bf{the concentration inequality}]
\label{cont}
Let $\{S_n\}$ be a random walk with increments $X_j$, $\ex X_j = 0$, $\ex X_j^2 = \sigma^2 \in (0, +\infty)$. Then there exists a real number $C$ such that for any $\Delta > 0$ there exists a natural $N$ such that for every $n > N$
\begin{equation}
\label{conc}
\sup_{x\in \mathbb{R}}\pr\left(S_n \in [x, x + \Delta)\right) \le \frac{C \Delta}{\sqrt{n}}.
\end{equation}
\begin{proof}
See, e.g. \cite{Petrov}, Section \RomanNumeralCaps{3}, Theorem 9.
\end{proof}
\end{prop}
\subsubsection{Localization of a random walk}
\begin{lemma}
\label{comp}
Assume that a two-dimensional random walk $\left\{ \ov{S}_n\right\}$ satisfies condition (C). Let also the correlation coefficient of components of the random walk increments $\rho$ belongs to the interval $(0, 1)$.
Then for any $\varepsilon >0$ there exists a square $D = [\delta, A] \times [\delta, A]$ such that for any $\ov{x} \in \mathcal{X}_V$ the following inequality holds:
\begin{equation}
\label{not_in_sq_p_plus}
\varlimsup_{T \to \infty} \pr_{\bfx}^{+}\left(\ov{S}_T + \ov{x} \notin \sqrt{T}D \right) \le \varepsilon \frac{u\left( \ov{x}_0 + M\ov{x}\right)}{V\left(M\ov{x}\right)}.
\end{equation}
\end{lemma}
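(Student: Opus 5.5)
We plan to write the $\pr_{\bfx}^{+}$-probability as an expectation under $\pr$ and split the complement of $\sqrt{T}D$ into a small neighbourhood of the boundary and a neighbourhood of infinity. By definition of $\pr^{+}_{\bfx}$ on $\mathcal{F}_T$,
\begin{equation*}
\pr_{\bfx}^{+}\left(\ov{S}_T + \ov{x} \notin \sqrt{T}D \right) = \frac{1}{V(M\ov{x})}\ex\left[ V\left(M(\ov{x}+\ov{S}_T)\right) I\{\tau(\ov{x}) > T,\ \ov{x}+\ov{S}_T\notin \sqrt{T}D\}\right].
\end{equation*}
Using (\ref{VU_un_eq}) and (\ref{simpU}), we bound $V(M\ov{y}) \le C u(M\ov{y}+\ov{x}_0)\le C'(|\ov{y}|^p + 1)$. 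The event $\{\ov{x}+\ov{S}_T \in K\setminus \sqrt{T}D\}$ is covered by three pieces: $B_1=\{|\ov{x}+\ov{S}_T| > A\sqrt{T}\}$ and $B_{2,i}=\{S_T^{(i)}+x_i < \delta\sqrt{T}\}$ for $i=1,2$.

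\textbf{Large values.} On $B_1$ we use the upper bound $V\le C'|\cdot|^p$ together with a tail estimate for $\ov{S}_T$ conditioned on $\tau(\ov{x})>T$. We write the contribution as $\ex[|\ov{x}+\ov{S}_T|^p; \tau>T, |\ov{x}+\ov{S}_T|>A\sqrt T]$. Since $p<2$, this is at most $(A\sqrt T)^{p-2}\ex[|\ov{x}+\ov{S}_T|^2;\tau>T]$. A cruder bound like this loses the factor $T^{-p/2}$, so we refine it. We condition at time $T/2$ via the Markov property and use Theorem \ref{den_2D}(2): $\pr(\tau(\ov{x})>T/2)\le C u(M\ov{x}+\ov{x}_0)T^{-p/2}$. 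For the second half we use that $|\ov{S}_T-\ov{S}_{T/2}|^2/T$ is uniformly integrable by Donsker and the second moment. This gives the bound $C u(M\ov{x}+\ov{x}_0)\,g(A)$ with $g(A)\to 0$ as $A\to\infty$, uniformly in large $T$. The $|\ov{S}_{T/2}|$ part is handled by the same argument iterated, or via the functional CLT for the walk conditioned to stay in the cone (the Denisov–Wachtel meander limit). Since $u(M\ov{x}+\ov{x}_0)$ is exactly the factor on the right of (\ref{not_in_sq_p_plus}), we choose $A$ so that $g(A)<\varepsilon/3$.

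\textbf{Near the boundary.} On $B_{2,1}\cap B_1^c$ we have $V\le C'(A^p T^{p/2}+1)$, so it suffices to show
\begin{equation*}
\pr\left(\tau(\ov{x})>T,\ S^{(1)}_T+x_1<\delta\sqrt T,\ |\ov{x}+\ov{S}_T|\le A\sqrt T\right)\le h(\delta,A)\,u(M\ov{x}+\ov{x}_0)\,T^{-p/2},
\end{equation*}
with $h(\delta,A)\to0$ as $\delta\to0$ for fixed $A$. Again we split at time $T/2$, applying Theorem \ref{den_2D}(2) to the first half and the Markov property to the second. In the second half, the event forces $S^{(1)}$ to stay positive on $[T/2,T]$ and to end in $[0,\delta\sqrt T)$. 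This is a one-dimensional event, and we estimate it uniformly in the starting point. We chop $[0,\delta\sqrt T)$ into unit intervals and apply the concentration inequality (Proposition \ref{cont}), which yields the probability $\le C\delta$ uniformly. A sharper estimate uses time reversal together with Proposition \ref{lnn}. Picking $\delta$ so that $C'A^p h(\delta,A)<\varepsilon/3$ finishes the argument, by symmetry for $i=2$. The condition $\rho\in(0,1)$ gives $p\in(1,2)$, which is used in the moment manipulations above.

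\textbf{Main obstacle.} The main obstacle is obtaining these bounds with the precise prefactor $u(M\ov{x}+\ov{x}_0)T^{-p/2}$, uniform in $\ov{x}\in\mathcal{X}_V$, rather than merely for fixed $\ov{x}$. For this I would rely on the uniform inequality (\ref{u}) at the intermediate time and on one-dimensional estimates that are uniform in the starting point for the second half.
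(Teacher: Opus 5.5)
Your near-boundary estimate is essentially the paper's: split at $T/2$, apply (\ref{u}) to the first half, and apply the concentration inequality over unit intervals to the second half, giving $C\delta+C/\sqrt{T}$. The large-values part, however, has a genuine gap. Splitting at $T/2$ only handles the term in which both the excess over $A\sqrt{T}$ and the weight $|\cdot|^p$ come from the increment $\ov{S}_T-\ov{S}_{T/2}$. The remaining terms need two estimates on the killed walk: $\ex\left[|\ov{x}+\ov{S}_{T/2}|^p;\tau(\ov{x})>T/2\right]=O\left(u(\ov{x}_0+M\ov{x})\right)$, and a tail bound $\pr\left(\tau(\ov{x})>T/2,\,|\ov{x}+\ov{S}_{T/2}|>A\sqrt{T}/2\right)\le g(A)\,u(\ov{x}_0+M\ov{x})\,T^{-p/2}$. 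These are exactly the moment and tail controls you are trying to prove, so ``iterating'' is circular. It also does not close formally, since each halving replaces the normalized threshold $A$ by $A/\sqrt{2}$. The Denisov--Wachtel meander limit does not help either. It is a weak-convergence statement for fixed $\ov{x}$, so it controls probabilities but not $\ex[|\ov{x}+\ov{S}_T|^p;\ldots]$ without uniform integrability of $|\ov{x}+\ov{S}_T|^p/T^{p/2}$ given $\tau(\ov{x})>T$, which is the same missing bound. It is also not among the facts supplied by Theorem \ref{den_2D}.

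The missing idea is that your first bound $(A\sqrt{T})^{p-2}\ex\left[|\ov{x}+\ov{S}_T|^2;\tau(\ov{x})>T\right]$ loses nothing. The loss only appears if you replace the killed second moment by $\ex|\ov{x}+\ov{S}_T|^2=|\ov{x}|^2+\mu T$, where $\mu=\ex|\ov{X}|^2$. Since $|\ov{x}+\ov{S}_n|^2-n\mu$ is a martingale and $\tau(\ov{x})\wedge T$ is a bounded stopping time,
\[
\ex\left[|\ov{x}+\ov{S}_T|^2;\tau(\ov{x})>T\right]\le \ex\left|\ov{x}+\ov{S}_{\tau(\ov{x})\wedge T}\right|^2=|\ov{x}|^2+\mu\,\ex\left[\tau(\ov{x})\wedge T\right].
\]
Summing (\ref{u}) gives $\ex[\tau(\ov{x})\wedge T]=\sum_{k<T}\pr(\tau(\ov{x})>k)\le C\,T^{1-p/2}u(\ov{x}_0+M\ov{x})$, using $p<2$. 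Hence $V(M\ov{x})\,\pr_{\bfx}^{+}\left(|\ov{x}+\ov{S}_T|\ge A\sqrt{T}\right)\le C A^{p-2}\left(T^{p/2-1}|\ov{x}|^2+u(\ov{x}_0+M\ov{x})\right)$, whose $\limsup$ is at most $CA^{p-2}u(\ov{x}_0+M\ov{x})$. This is the paper's argument.

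The uniformity in $\ov{x}$ you single out as the main obstacle is neither needed nor attainable. Only $A$ and $\delta$ must be independent of $\ov{x}$, and the $\limsup$ is taken for each fixed $\ov{x}$, which removes the non-uniform term $T^{p/2-1}|\ov{x}|^2$. For fixed $T$ and $\ov{x}=(L,L)$ with $L\to\infty$, one has $\pr_{\bfx}^{+}\left(|\ov{x}+\ov{S}_T|\ge A\sqrt{T}\right)\to 1$ while $u(\ov{x}_0+M\ov{x})/V(M\ov{x})\to 1$, so no bound uniform in both $\ov{x}$ and large $T$ can hold.
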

\begin{proof}[Proof of Lemma \ref{comp}]
Firstly, we would like to prove the following statement.
\begin{prop}
\label{prop_un_eq}
There exists a parameter $C$ such that for any natural $T$ and $\ov{x} \in K$ the estimate
\begin{equation}
\label{eq_for_exp}
\ex \left[\tau(\ov{x}) \wedge T\right] \le C T^{1 - p/2} u\left(\ov{x}_0 + M\ov{x}\right)
\end{equation}
holds.
\end{prop}
\begin{proof}
Indeed, using inequality (\ref{u}), we get the desired estimation:
\begin{align*}
\ex \left[ \tau(\ov{x}) \wedge T\right] &= \sum_{k = 0}^{+\infty} \pr \left( \tau(\ov{x}) \wedge T > k\right) = \sum_{k = 0}^{T - 1} \pr \left( \tau(\ov{x}) > k\right) \le 1 + Cu\left(\ov{x}_0 + M \ov{x}\right)\sum_{k = 1}^{T-1} \frac{1}{k^{p/2}} \le
\\ &\le \widehat{C} T^{1-p/2} u\left(\ov{x}_0 + M \ov{x}\right).
\end{align*}
\end{proof}
We fix $\ov{x} \in \mathcal{X}_V$. Let us denote for any $0 < \delta < A$ and natural $T$
$$F := \left\{\ov{z}\in K:\, |\ov{z}| \ge A\sqrt{T}\right\}, \quad U:=\left\{\ov{z}\in K: \, |\ov{z}| \le A \sqrt{T}, \, \mathrm{dist}\left( \ov{z}, \partial K\right) < \delta \sqrt{T}\right\}, \quad D := [\delta, A] \times [\delta, A].$$
Note that $\mathbb{R}^2 \backslash \sqrt{T}D \subseteq F \cup U.$ Thus, the inequality
\begin{align}\pr_{\bfx}^{+}\left(\ov{S}_T + \ov{x} \notin \sqrt{T}D \right) \le \pr_{\bfx}^{+}\left(\ov{S}_T + \ov{x} \in U \right) + \pr_{\bfx}^{+}\left(\ov{S}_T + \ov{x} \in F \right) \label{TD_UF}
\end{align}
holds. Let us note that $\ov{S}_{T} + \ov{x} \le \ov{A} \sqrt{T}= \sqrt{T} \left(A, A\right)$ on the event $\left\{\ov{S}_{T} + \ov{x} \in U\right\}$. Therefore,
\begin{align}
V\left(M\ov{x}\right)\pr_{\bfx}^{+}\left(\ov{S}_T + \ov{x} \in U \right) \le V\left(\sqrt{T}M\ov{A}\right) \pr\left(\tau(\ov{x})> T, \ov{S}_T + \ov{x} \in U\right). \label{p_plus_U_set}
\end{align}
We may consider that $A > 1$.
Combining inequality (\ref{u}) and asymptotic relation (\ref{e}), we have
\begin{align}
V\left(\sqrt{T}M\ov{A}\right) &\le C_1 u\left( \ov{x}_0 + \sqrt{T}M\ov{A}\right) \le C_2 \left|R\ov{x}_0 + \sqrt{T} M\ov{A}\right|^{p} \le C_2 \|M\| \left(2T\right)^{p/2} A^{p} \left|1 + \frac{\ov{x}_0}{\sqrt{T}\left|M\ov{A}\right|}\right|^p \le \nonumber
\\&\le C_3 T^{p/2} A^{p} \left| 1 + \left\| M^{-1}\right\| \frac{\ov{x}_0}{\sqrt{2}} \right|^{p} = C_4 T^{p/2} A^{p}, \label{VTMA}
\end{align}
where the parameter $C_4$ does not depend on both $T$ and $A$. Let us denote
$$\Gamma_i := \left\{\ov{z} = (z_1, z_2) \in K: z_i \le \delta \sqrt{T}\right\}, \quad i = 1, 2.$$
For $i \in \{1, 2\}$ the following relations are valid:
\begin{align}
&\pr\left( \tau(\ov{x}) > T, \ov{S_T} + \ov{x} \in \Gamma_i \right) \le \nonumber
\\&\le \int\limits_{-x_1}^{+\infty} \int\limits_{-x_2}^{+\infty} \pr\left( \ov{S}_j + \ov{x} \in d\ov{w}, \, j \le T/2\right) \pr \left( \ov{S}_j + \ov{w} > 0, j \le T/2, S_T^{(i)} + w_i \in \left(0, \delta \sqrt{T}\right) \right) \le \nonumber
\\ &\le \int\limits_{-x_1}^{+\infty} \int\limits_{-x_2}^{+\infty} \pr\left( \ov{S}_j + \ov{x} \in d\ov{w}, \, j \le T/2\right) \pr \left(S_T^{(i)} + w_i \in \left(0, \delta \sqrt{T}\right) \right). \label{Gamma_i}
\end{align}
By inequality (\ref{conc}), we get
\begin{align*}
\pr \left(S_T^{(i)} + w_i \in \left(0, \delta \sqrt{T}\right) \right) \le \sum_{k = 1}^{\left[\delta \sqrt{T}\right] + 1} \pr\left( S_T^{(i)} + w_i \in [k, k-1)\right) \le C\frac{\delta\sqrt{T} + 1}{\sqrt{T}} = C\delta + \frac{C}{\sqrt{T}}.
\end{align*}
Hence, taking into account inequalities (\ref{u}) and (\ref{Gamma_i}), we have
\begin{align*}
\pr\left( \tau(\ov{x}) > T, \ov{S_T} + \ov{x} \in \Gamma_i \right) \le C_5 \frac{u\left(\ov{x}_0 + M \ov{x}\right)}{T^{p/2}} \left(\delta + \frac{1}{\sqrt{T}} \right).
\end{align*}
Whence, by (\ref{p_plus_U_set}) and (\ref{VTMA}), it follows that
\begin{align}
\varlimsup_{T \to \infty} \pr_{\bfx}^{+}\left(\ov{S}_T + \ov{x} \in U \right) &\le \varlimsup_{T \to \infty} \sum_{i = 1}^{2} \frac{V\left( \sqrt{T}M\ov{A}\right)}{V\left(M\ov{x}\right)} \pr\left( \tau(\ov{x}) > T, \ov{S_T} + \ov{x} \in \Gamma_i \right) \le \nonumber
\\ &\le 2C_4C_5\frac{u\left(\ov{x}_0 + M \ov{x}\right)}{V\left(M\ov{x}\right)}A^{p} \delta = C_6 \frac{u\left(\ov{x}_0 + M \ov{x}\right)}{V\left(M\ov{x}\right)}A^{p} \delta, \label{p_plus_U_final}
\end{align}
where $C_6$ does not depend on both $\delta$ and $A$.
\\ \ab By inequalities (\ref{simpU}) and (\ref{VU_un_eq}), we get the estimation
\begin{align*}
V\left(M\ov{x}\right)\pr_{\bfx}^{+}\left(\ov{S}_T + \ov{x} \in F \right) \le C_2 \ex \left|\ov{x}_0 + M \left(\ov{S}_T + \ov{x} \right)\right|^{p} I\left\{\tau(\ov{x}) > n, \ov{S}_T + \ov{x} \in F\right\}.
\end{align*}
On the event $\left\{\ov{S}_T + \ov{x} \in F\right\}$ the following inequalities
\begin{align*}
\left|\ov{x}_0 + M \left(\ov{S}_T + \ov{x} \right)\right|^{p} &\le \|M\| \left|\ov{S}_T + \ov{x}\right|^{p} \left|\frac{ \ov{x}_0}{\left|M\left(\ov{S}_T + \ov{x}\right)\right|} + 1 \right|^p \le \|M\| \left|\ov{S}_T + \ov{x}\right|^{p} \left|\frac{\|M^{-1}\| \ov{x}_0}{A \sqrt{T}} + 1\right|^p \le
\\ &\le C_7 \left|\ov{S}_T + \ov{x}\right|^p \le C_7 \left( A\sqrt{T}\right)^{p-2} \left|\ov{S_T} + \ov{x}\right|^2
\end{align*}
are valid. The last inequality in this chain takes place because $p < 2$.
Thus, we get the estimation
\begin{align}
V\left(M\ov{x}\right)\pr_{\bfx}^{+}\left(\ov{S}_T + \ov{x} \in F \right) &\le C_2 C_7 \left(A\sqrt{T}\right)^{p-2}\ex \left|\ov{S}_T + \ov{x}\right|^2 I\left\{\tau(\ov{x})> n, \ov{S}_T + \ov{x} \in F\right\}\le \nonumber
\\ & \le C_8 \left(A \sqrt{T}\right)^{p-2} \ex \left|\ov{S}_{\tau(\ov{x}) \wedge T} + \ov{x} \right|^2.\label{p_plus_F_set}
\end{align}
Note that for any natural $n$
$$\left. \ex \left( \left|\ov{x} + \ov{S}_n\right|^2 \right| \mathcal{F}_{n-1} \right) = \left|\ov{x} + \ov{S}_{n-1}\right|^2 + \ex \left|\ov{X}\right|^2,\quad \mathcal{F}_n = \sigma \left(\ov{S}_j, \; j \le n\right).$$
Therefore, the sequence $\{\chi_n(\ov{x}) = \left|\ov{x} + \ov{S}_n\right|^2 - n \mu\}_n$ is a martingale with respect to the filtration $\left(\mathcal{F}_n\right)$, where $\mu = \ex \left|\ov{X}\right|^2$. Moreover, $\tau(\ov{x}) \wedge T$ is a bounded stopping time. Therefore, by the optimal stopping theorem, we get
\begin{equation}
\label{stop}
\ex \left|\ov{x} + \ov{S}_{\tau(\ov{x}) \wedge T}\right|^2 = \left|\ov{x}\right|^2 + \mu \ex \left[\tau(\ov{x}) \wedge T\right].
\end{equation}
By Proposition \ref{prop_un_eq}, we have
$$\mu \ex \left[\tau(\ov{x}) \wedge T\right] \le C T^{1 - p/2} u\left(\ov{x}_0 + M\ov{x}\right). $$
Hence, taking into account (\ref{p_plus_F_set}), (\ref{stop}), we get the inequality
$$\pr_{\bfx}^{+}\left(\ov{S}_T + \ov{x} \in F \right) \le C_9 \left|\ov{x}\right|^2\frac{A^{p-2} T^{p/2 - 1}}{V\left(M\ov{x}\right)} + C_{10}A^{p-2}\frac{u\left(\ov{x}_0 + M\ov{x}\right)}{V\left(M\ov{x}\right)}.$$
Thus, the relation
\begin{align}
\varlimsup_{T \to \infty}\pr_{\bfx}^{+}\left(\ov{S}_T + \ov{x} \in F \right) \le C_{10}A^{p-2}\frac{u\left(\ov{x}_0 + M\ov{x}\right)}{V\left(M\ov{x}\right)} \label{p_plus_F_final}
\end{align}
is valid.
Finally, combining inequalities (\ref{TD_UF}), (\ref{p_plus_U_final}), (\ref{p_plus_F_final}), we get 
\begin{align*}
\varlimsup_{T \to \infty} \pr_{\bfx}^{+}\left(\ov{S}_T + \ov{x} \notin \sqrt{T}D \right) &\le \varlimsup_{T \to \infty} \pr_{\bfx}^{+}\left(\ov{S}_T + \ov{x} \in U \right) + \varlimsup_{T \to \infty} \pr_{\bfx}^{+}\left(\ov{S}_T + \ov{x} \in F \right) \le
\\ &\le \frac{u\left(\ov{x}_0 + M \ov{x} \right)}{V\left(M\ov{x}\right)} \left( C_6 A^p \delta + C_{10} A^{p-2}\right).
\end{align*}
We fix $\varepsilon > 0$, then fix $A > 0$ such that the inequality $C_{10} A^{p-2} \le \varepsilon / 2$ holds. It is possible because $p<2$.
Then we fix $\delta > 0$ such that $C_6 \delta A^{p-1}\le \varepsilon / 2.$ Therefore,
\begin{equation*}
\label{lemma2_final_eq}
\varlimsup_{T \to \infty} \pr_{\bfx}^{+}\left(\ov{S}_T + \ov{x} \notin \sqrt{T}D \right) \le \varepsilon \frac{u\left(\ov{x}_0 + M\ov{x}\right)}{V\left(M\ov{x}\right)}.
\end{equation*}
Lemma \ref{comp} is proved.
\end{proof}
\subsubsection{Separation of a random walk from the boundary of the quadrant}
\begin{lemma}
\label{lemmaU}
Let condition (C) be valid for a two-dimensional random walk $\left\{ \ov{S}_n\right\}$. Let $D \subset K$ be a measurable bounded set such that $\delta := \mathrm{dist}\left(D, \partial K\right) > 0$. Then for any $s \in (0, 1/2)$ and $\ov{x} \in \mathcal{X}_V$ 
$$\lim_{T \to \infty}\pr_{\bfx}^{+}\left( \ov{S}_T + \ov{x} \in \sqrt{T}D, \exists j > T: \ov{S}_j + \ov{x} \notin U_{T, s}\right) = 0,$$
 where
\begin{equation}
\label{utsDef}
U_{T, s} = \left\{ \ov{x} \in K : \mathrm{dist}\left(\ov{x}, \partial K\right) > T^{1/2 - s}\right\}.
\end{equation}
\end{lemma}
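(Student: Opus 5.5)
We will use the Markov property of $\pr_{\bfx}^{+}$ together with the $h$-transform structure given by the harmonic function $V\circ M$ from (\ref{V_harm2}). Conditionally on $\mathcal{F}_T$, on the event $\{\ov{S}_T+\ov{x}=\ov{y}\}$, the walk $\{\ov{S}_{T+j}+\ov{x}\}_{j\ge 0}$ under $\pr_{\bfx}^{+}$ is distributed as the walk started at $\ov{y}$ under $\pr_{\ov{y}}^{+}$. Hence it suffices to bound, uniformly in $\ov{y}\in\sqrt{T}D$,
$$\pr_{\ov{y}}^{+}\left(\exists j\ge 0:\ \ov{S}_j+\ov{y}\notin U_{T,s}\right)$$
and show that this tends to $0$ as $T\to\infty$. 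Since $\mathrm{dist}(\ov{y},\partial K)\ge\delta\sqrt{T}\gg T^{1/2-s}$, the walk must travel a long way back toward the boundary. The point is that under $\pr^{+}$ this has small probability, because $V$ is much smaller near the boundary.

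The key step is an optional stopping argument for the nonnegative $\pr$-martingale $V(M(\ov{S}_n+\ov{y}))I\{\tau(\ov{y})>n\}$. Let $\sigma:=\inf\{j:\ \ov{S}_j+\ov{y}\notin U_{T,s}\}$. By the definition of $\pr^{+}$ and Fatou/optional stopping on $\{\sigma\le N\}$, letting $N\to\infty$,
$$\pr_{\ov{y}}^{+}(\sigma<\infty)\le\frac{\ex\left[V\left(M(\ov{S}_\sigma+\ov{y})\right);\ \sigma<\tau(\ov{y}),\ \sigma<\infty\right]}{V(M\ov{y})}.$$
At time $\sigma<\tau$, the point $\ov{z}=\ov{S}_\sigma+\ov{y}$ lies in $K$ with $\mathrm{dist}(\ov{z},\partial K)\le T^{1/2-s}$. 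We then bound $V(M\ov{z})\le Cu(M\ov{z}+\ov{x}_0)$ by (\ref{VU_un_eq}), and by (\ref{ucUP}) and Proposition \ref{use_uneq} this is at most $C|\ov{z}|^{p-1}(T^{1/2-s}+1)$. For the denominator we use (\ref{VuEq2}) with (\ref{ucDO}): for large $T$, $V(M\ov{y})\ge c|\ov{y}|^{p-1}\delta\sqrt{T}\ge c'T^{p/2}$, since $|\ov{y}|\asymp\sqrt{T}$ on $\sqrt{T}D$.

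The main obstacle is that $|\ov{z}|$ at the exit time is not bounded: the walk may wander far away before approaching the boundary. I would split according to whether $\sup_{j\le\sigma}|\ov{S}_j+\ov{y}|\le T^{1/2+\eta}$ for a small $\eta<s/(p-1)$.

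On the good part the numerator is at most $CT^{(1/2+\eta)(p-1)}T^{1/2-s}$. Divided by $T^{p/2}$, this gives $T^{\eta(p-1)-s}\to0$.

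For the bad part, the walk reaches distance $R=T^{1/2+\eta}$ before $\sigma$. We apply the same optional stopping with the hitting time $\theta$ of $\{|\ov{z}|\ge R\}$. Here it is cleaner to bound $\pr^{+}_{\ov{y}}(\theta<\infty)$ directly using the growth of $V$: since $V$ is $p$-homogeneous at infinity, the ratio $V(M\ov{y})/V(\text{at }R)$ does not decay, so this is not the right route. Instead I would stop at $\sigma\wedge\theta$ and use $p<2$. A path reaching distance $R$ and then returning to within $T^{1/2-s}$ of the boundary contributes at most $\ex[\,|\ov{S}_\sigma+\ov{y}|^{p-1};\ \cdot\,]$, and this is controlled by $\ex|\ov{S}_{\sigma}+\ov{y}|^{p-1}\le(\ex|\ov{S}_{\sigma}+\ov{y}|^2)^{(p-1)/2}$. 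That in turn is handled with the martingale $|\ov{S}_n+\ov{y}|^2-n\mu$ and the bound $\ex[\tau\wedge n]$ from Proposition \ref{prop_un_eq}, similarly to the proof of Lemma \ref{comp}.

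Finally we integrate over $\ov{y}\in\sqrt{T}D$ against $\pr_{\bfx}^{+}(\ov{S}_T+\ov{x}\in d\ov{y})$, which has total mass at most one. The uniform bound tending to zero then gives the lemma for every fixed $s\in(0,1/2)$ and $\ov{x}\in\mathcal{X}_V$.
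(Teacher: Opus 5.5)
Your reduction to $\pr^{+}_{\ov y}$ via the Markov property is fine. So are the optional-stopping identity $\pr^{+}_{\ov y}(\sigma<\infty)=\ex\bigl[V(M(\ov S_\sigma+\ov y));\,\sigma<\tau(\ov y),\,\sigma<\infty\bigr]/V(M\ov y)$ and the good-part estimate. The gap is in the bad part.

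Optional stopping for $|\ov S_n+\ov y|^2-n\mu$ at $\sigma\wedge n$, followed by $n\to\infty$, gives at best $\ex\bigl[|\ov S_\sigma+\ov y|^2;\sigma<\infty\bigr]\le|\ov y|^2+\mu\,\ex\sigma$. But $\ex\sigma=\infty$. Indeed, $U_{T,s}=\ov w_T+K$ with $\ov w_T=T^{1/2-s}(1,1)$, so $\sigma=\tau(\ov y-\ov w_T)$. By (\ref{e}), $\pr(\sigma>n)\asymp n^{-p/2}$ with $p<2$. Proposition \ref{prop_un_eq} therefore only gives $\ex[\sigma\wedge n]\lesssim n^{1-p/2}$, which diverges.

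This is not just a weak estimate. For Brownian motion in the wedge $MK$, the exit radius has tail of order $r^{-p}$, so only moments of order less than $p$ of the exit point are finite, and the second moment you want to use is infinite. In Lemma \ref{comp} the same computation worked because the horizon $T$ was fixed and the growth $T^{1-p/2}$ was compensated by the factor $(A\sqrt T)^{p-2}$. Here there is no horizon.

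Stopping at $\sigma\wedge\theta$ does make the stopping time integrable, but the bad contribution is produced after $\theta$. Bounding $\ex\bigl[V(M(\ov S_\sigma+\ov y));\,\theta<\sigma<\tau(\ov y)\bigr]$ via the strong Markov property at $\theta$ requires a bound on $V(M\ov z)\,\pr^{+}_{\ov z}(\sigma<\infty)$ for far points $\ov z=\ov S_\theta+\ov y$. That is the original problem again, and you would also need an estimate of $\pr(\theta<\sigma)$, which you do not supply.

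The missing idea is that you never need to control the exit position, because the numerator of your identity can be computed exactly. Since $\{\sigma>n\}=\{\tau(\ov y-\ov w_T)>n\}\subseteq\{\tau(\ov y)>n\}$, Theorem \ref{th_plus} with $Y_n=I\{\sigma>n\}$ together with (\ref{e}) gives $\pr^{+}_{\ov y}(\sigma=\infty)=\lim_{n\to\infty}m_n(\ov y-\ov w_T)/m_n(\ov y)=V(M(\ov y-\ov w_T))/V(M\ov y)$. This is the paper's route, written there as $m_{n-T}(\ov S_T+\ov x)-m_{n-T}(\ov S_T+\ov x-\ov w_T)$ plus dominated convergence.

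On $\sqrt T D$, both $\ov y$ and $\ov y-\ov w_T$ are at distance at least $\delta\sqrt T-T^{1/2-s}\to\infty$ from $\partial K$, so $V\sim u$ by (\ref{VuEq2}). The following facts then show that the ratio tends to $1$ uniformly in $\ov y\in\sqrt T D$, which proves the lemma:
\begin{itemize}
\item the homogeneity of $u$;
\item $|\ov w_T|=o(\sqrt T)$;
\item the uniform continuity of $g_1$;
\item the positivity of $g_1$, bounded away from $0$, on the directions of $M(\sqrt T D)$.
\end{itemize}
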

\begin{proof}
By Theorem \ref{th_plus} we get
\begin{align*}
&\pr_{\bfx}^{+}\left( \ov{S}_T + \ov{x} \in \sqrt{T}D, \exists j > T: \ov{S}_j + \ov{x} \notin U_{T, s}\right) =
\\ &= \lim_{n \to \infty}\left. \pr \left(\ov{S}_T + \ov{x} \in \sqrt{T}D,\, \exists j \in (T, n]: \ov{S}_j + \ov{x} \notin U_{T, s}\right.|\tau(\ov{x}) > n\right).
\end{align*}
Thus, if we establish the equality
$$\lim_{T \to \infty} \lim_{n \to \infty} \pr \left( \tau(\ov{x}) > n, \, \ov{S}_T + \ov{x} \in \sqrt{T}D,\, \exists j \in (T, n]: \ov{S}_j + \ov{x} \notin U_{T, s}\right) /m_n(\ov{x}) = 0,$$
then Lemma \ref{lemmaU} will be proved.
By the properties of conditional expectation, we have the following equalities
\begin{align*}
&\pr \left( \tau(\ov{x}) > n, \, \ov{S}_T + \ov{x} \in \sqrt{T}D,\, \exists j \in (T, n]: \ov{S}_j + \ov{x}\notin U_{T, s}\right) =
\\&= \pr \left( \tau(\ov{x}) > n, \, \ov{S}_T + \ov{x} \in \sqrt{T}D\right) - \pr \left( \tau(\ov{x}) > n, \, \ov{S}_T + \ov{x} \in \sqrt{T}D,\, \ov{S}_j + \ov{x} \in U_{T, s}, j \in (T, n] \right) =
\\&= \ex I\{\tau(\ov{x}) > T\} I\{\ov{S}_T + \ov{x} \in \sqrt{T}D\} \left(m_{n-T}\left(\ov{S}_T + \ov{x}\right) - m_{n-T}\left( \ov{S}_T + \ov{x} - \ov{w}_T\right)\right),
\end{align*}
where $\ov{w}_T = T^{1/2 - s}\ov{1}$, $\ov{1}:= (1, 1)$, i.e. $\ov{w}_T + K = U_{T, s}.$
By Lebesque's dominated convergence theorem, the following equalities takes place:
\begin{align*}
&\lim_{n \to \infty} \ex I\{\tau(\ov{x}) > T\} I\{\ov{S}_T + \ov{x} \in \sqrt{T}D\} \frac{m_{n-T}\left(\ov{S}_T + \ov{x}\right)}{m_n(\ov{x})} = \ex_{\bfx}^{+} I\{\ov{S}_T + \ov{x} \in \sqrt{T}D\}, 
\\&\lim_{n \to \infty} \ex I\{\tau(\ov{x}) > T\} I\{\ov{S}_T + \ov{x} \in \sqrt{T}D\} \frac{m_{n-T}\left(\ov{S}_T + \ov{x} - \ov{w}_T\right)}{m_n(\ov{x})} = \nonumber
\\&= \ex_{\bfx}^{+} I\{\ov{S}_T + \ov{x} \in \sqrt{T}D\} \frac{V\left(M\left(\ov{S}_T + \ov{x} - \ov{w}_T\right)\right)}{V\left(M\left(\ov{S}_T + \ov{x}\right)\right)}. 
\end{align*}
Existence of the integrable majorants follows by the same arguments as in the proof of Theorem \ref{th_plus}, see inequality (\ref{mink}).
On the event $\left\{ \ov{S}_T + \ov{x} \in \sqrt{T}D\right \}$ from inequalities (\ref{eq_im_M}), (\ref{dist_eq_1}) it follows that
\begin{align*}
&\left|M(\ov{S}_T + \ov{x})\right| \ge \frac{\left|\ov{S}_T + \ov{x}\right|}{\|M^{-1}\|}\ge \frac{\mathrm{dist}( \ov{S}_T +\ov{x}, \partial K)}{\|M^{-1}\|} \ge \frac{\delta \sqrt{T}}{\|M^{-1}\|} \to \infty, \;\; T \to \infty,
\\&\frac{\left| M\ov{w}_T \right|}{\left|M\left(\ov{S}_T + \ov{x}\right)\right|} \le \frac{\|M^{-1}\| \|M\| \sqrt{2} \cdot T^{1/2 - s}}{\delta \sqrt{T}} \le \frac{\|M^{-1}\| \|M\| \sqrt{2}}{\delta T^{s}} \to 0, \;\; T \to \infty, 
\\& \mathrm{dist}\left(M\left(\ov{S}_T +\ov{x}- \ov{w}_T\right), \partial MK\right) 	\ge \frac{\delta \sqrt{T} - T^{1/2 - s}}{\|M^{-1}\|} \to \infty, \;\; T \to \infty. 
\end{align*}
Hence, from equality (\ref{uDef}) and inequality (\ref{VuEq2}), we get as $T \to \infty$:
\begin{equation}
\label{eqVV}
\frac{V\left(M\left(\ov{S}_T +\ov{x} - \ov{w}_T\right)\right)}{V\left(M\left(\ov{S}_T + \ov{x}\right)\right)} \sim \frac{g_1\left(M\left(\ov{S}_T + \ov{x} - \ov{w}_T\right)/|M(\ov{S}_T+ \ov{x} - \ov{w}_T)|\right)}{g_1\left(M\left(\ov{S}_T + \ov{x}\right)/ |M(\ov{S}_T + \ov{x})|\right)},
\end{equation}
where the function $g_1$ is defined by (\ref{def_g1}).
Let us denote for any natural $T$ and $\ov{x} \in \mathcal{X}_V$
\begin{align*}
\ov{A}_T\left(\ov{x}\right) := \frac{M\left(\ov{S}_T +\ov{x} - \ov{w}_T\right)}{\left|M\left(\ov{S}_T +\ov{x}- \ov{w}_T\right)\right|}, \quad \ov{B}_T\left(\ov{x}\right) := \frac{M\left(\ov{S}_T +\ov{x}\right)} {\left|M\left(\ov{S}_T +\ov{x}\right)\right|}.
\end{align*}
On the event $\left\{\ov{S}_T + \ov{x} \in \sqrt{T}D\right\}$ we have the estimation
\begin{align}
\left|\ov{A}_T\left(\ov{x}\right) - \ov{B}_T\left(\ov{x}\right) \right| &\le \frac{\left|M\ov{w}_T\right|}{\left|M\left(\ov{S}_T + \ov{x} - \ov{w}_T\right)\right|} + \left|M\left(\ov{S}_T +\ov{x}\right)\right|\left| \frac{1}{\left|M\left(\ov{S}_T +\ov{x}- \ov{w}_T\right)\right|} - \frac{1}{\left|M\left(\ov{S}_T +\ov{x}\right)\right|}\right| \le \nonumber
\\ &\le \frac{2\left|M\ov{w}_T\right|}{\left|M\left(\ov{S}_T + \ov{x} - \ov{w}_T\right)\right|} \le
\frac{2\sqrt{2}\left\|M\right\| \left\|M^{-1}\right\| T^{1/2 -s}}{\delta \sqrt{T} - T^{1/2 - s}}, \label{convAB}
\end{align}
where the right-hand part tends to zero as $T \to \infty$.
\\ \ab The function $g_1$ is continuous on the set $\ov{\Sigma}$. Therefore, $g_1$ is a uniformly continuous function on $\ov{\Sigma}$. Combining this fact with estimation (\ref{convAB}), taking into account that this estimation is uniform with respect to $\ov{x}$, we have that for any $\varepsilon > 0$ there exists a natural number $T_0$ such that for any $T > T_0$ and $\ov{x} \in \mathcal{X}_V$ on the event $\left\{ \ov{S}_T + \ov{x} \in \sqrt{T}D\right\}$ the following inequality
$$\left|g_1\left(\ov{A}_T\left(\ov{x}\right)\right) - g_1\left(\ov{B}_T\left(\ov{x}\right)\right)\right| < \varepsilon$$
takes place.
\\ \ab Since the set $D$ is bounded, without loss of generality, let D be the subset of the sphere of radius $r$ centered at the origin. Let us recall that $g_1$ vanishes only on the set $\partial \Sigma$ (see Remark \ref{msign}). At the same time the random variable $\ov{B}_T\left(\ov{x}\right)$ is separated from the border $\partial \Sigma$. Indeed,
\begin{align*}
\mathrm{dist}\left(\ov{B}_T \left(\ov{x}\right), \partial \Sigma\right) \ge \frac{\mathrm{dist}\left(\ov{S}_T + \ov{x}, \partial K\right)}{\|M^{-1}\| \left|M\left(\ov{S}_T + \ov{x}\right)\right|} \ge \frac{\delta \sqrt{T}}{\|M^{-1}\| \|M\| r \sqrt{T}} \ge
\frac{\delta }{\|M^{-1}\| \|M\| r} > 0.
\end{align*}
\ab The separation $\ov{B}_T$ from the border $\partial \Sigma$ implies that there exists $\theta > 0$ such that for any $\ov{x}$ the estimation $g_1\left(\ov{B}_T\left(\ov{x}\right)\right) > \theta > 0$ holds. Therefore, for any $\varepsilon >0$ there exists a natural number $T_0$ such that for any $T > T_0$ and any $\ov{x}$ on the event $\left\{\ov{S}_T + \ov{x} \in \sqrt{T}D\right\}$ the inequality
$$\left| 1- \frac{g_1\left(\ov{A}_T\left(\ov{x}\right)\right)}{g_1\left(\ov{B}_T\left(\ov{x}\right)\right)} \right| < \varepsilon$$
takes place.
\\ \ab We fix $\varepsilon > 0$ and number $T_0$ such that for any $T > T_0$ on the event $\left\{\ov{S}_T + \ov{x} \in \sqrt{T}D\right\}$ the following inequalities
\begin{align*}
\left|1 -\frac{V\left(M\left(\ov{S}_T + \ov{x} - \ov{w}_T\right)\right)}{V\left(M\left(\ov{S}_T +\ov{x}\right)\right)} \right| \le \left| 1- \frac{g_1\left(\ov{A}_T\left(\ov{x}\right)\right)}{g_1\left(\ov{B}_T\left(\ov{x}\right)\right)} \right| + \varepsilon \frac{g_1\left(\ov{A}_T\left(\ov{x}\right)\right)}{g_1\left(\ov{B}_T\left(\ov{x}\right)\right)} \le \varepsilon + \varepsilon (1+\varepsilon)
\end{align*}
holds. Then we have
\begin{align*}
&\lim_{n \to \infty} \frac{\pr \left( \tau(\ov{x}) > n, \, \ov{S}_T + \ov{x} \in \sqrt{T}D,\, \exists j \in (T, n]: \ov{S}_j + \ov{x} \notin U_{T, s}\right)}{\pr \left(\tau(\ov{x}) > n\right)} \le
\\&\le \ex_{\bfx}^{+} I\{\ov{S}_T + \ov{x} \in \sqrt{T}D\} \left|1 - \frac{V\left(M\left( \ov{S}_T + \ov{x} - \ov{w}_T\right)\right)}{V\left(M\left(\ov{S}_T + \ov{x}\right)\right)} \right| \le \left(2\varepsilon + \varepsilon^2\right) \ex_{\bfx}^{+} I\{\ov{S}_T + \ov{x} \in \sqrt{T}D\} \le 2\varepsilon + \varepsilon^2.
\end{align*}
Thus,
\begin{align*}
\varlimsup_{T \to \infty}\lim_{n \to \infty} \frac{\pr \left( \tau(\ov{x}) > n, \, \ov{S}_T + \ov{x} \in \sqrt{T}D,\, \exists j \in (T, n]: \ov{S}_j + \ov{x} \notin U_{T, s}\right)}{\pr \left(\tau(\ov{x}) > n\right)} \le 2\varepsilon + \varepsilon^2.
\end{align*}
Tending $\varepsilon \downarrow 0$ in the last inequality, we obtain Lemma \ref{lemmaU}.
\end{proof}
\begin{prop}
Let condition (C) be valid for a two-dimensional random walk $\left\{ \ov{S}_n\right\}$.
Then for any $\alpha > 0$ there exists a positive parameter $C = C(\alpha)$ such that for any sufficiently large $n$ and any
$\ov{x} \in K$:
\begin{equation}
\label{Kn_uneq}
\pr\left(\tau(\ov{x})> n, \ov{S}_n +\ov{x} \in K \backslash K_{n, \alpha}\right) \le \frac{C \ln^2n}{n^{p/2 + 1}} u\left(\ov{x}_0 + M \ov{x}\right),
\end{equation}
where
\begin{equation}
\label{Kdef}
K_{n, \alpha}:= \left\{\ov{z} \in K: \mathrm{dist}\left(\ov{z}, \partial K\right) > \alpha \ln n\right\}.
\end{equation}
\end{prop}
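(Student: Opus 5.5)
We split $K\backslash K_{n,\alpha}$ into the two strips $\Gamma_i := \{\ov{z}\in K: z_i \le \alpha\ln n\}$, $i=1,2$. For each strip we then use the same "split the time interval in half" device as in the proof of Lemma \ref{comp}, except that now both halves are exploited. Put $m = [n/2]$ and condition on $\mathcal{F}_m$:
\begin{align*}
&\pr\left(\tau(\ov{x})>n, \ov{S}_n+\ov{x}\in\Gamma_i\right) \le
\\ &\le \int_K \pr\left(\tau(\ov{x})>m, \ov{S}_m+\ov{x}\in d\ov{w}\right)\pr\left(\ov{S}_j+\ov{w}>0,\ j\le n-m,\ S^{(i)}_{n-m}+w_i\in(0,\alpha\ln n]\right).
\end{align*}
The first factor carries the cone decay. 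Integrating it out gives $\pr(\tau(\ov{x})>m)\le C u(\ov{x}_0+M\ov{x})\,n^{-p/2}$ by (\ref{u}). It therefore remains to show that the second factor is at most $C\ln^2 n/n$, uniformly in $\ov{w}\in K$.

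For the uniform bound on the second factor we drop the constraint on the other coordinate. What is left is a one-dimensional problem for $S^{(i)}$ with variance $\sigma_i^2$:
$$\sup_{w\ge 0}\pr\left(S_j^{(i)}+w>0,\ j\le k,\ S_k^{(i)}+w\le \alpha\ln n\right),\qquad k=n-m.$$
We bound this by time reversal. Set $S'_j := S^{(i)}_k - S^{(i)}_{k-j}$, which is a copy of the walk. On the event above we have $S'_j = S^{(i)}_k - S^{(i)}_{k-j} < (\alpha\ln n - w) + w = \alpha\ln n$ for all $j\le k$. We now split the time interval once more into halves $[0,k/2]$ and $[k/2,k]$.
\begin{itemize}
\item On the first half of the original walk we keep $\{S^{(i)}_j+w>0,\ j\le k/2\}$. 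This is bounded by $C(1+w)/\sqrt{k}$ in the usual way, but that is not uniform in $w$.
\item For a bound uniform in $w$ we use only the reversed walk, first on $[0,k/2]$: the event $\{S'_j<\alpha\ln n,\ j\le k/2\}$ has probability at most $C\ln n/\sqrt{k}$ by Proposition \ref{lnn}.
\item Independently, we use the forward increments on the first half together with the terminal window. Given $\mathcal{F}_{k/2}$, the requirement $S^{(i)}_k+w\in(0,\alpha\ln n]$ forces the increment over $[k/2,k]$ into a window of length $\alpha\ln n$. By the concentration inequality (\ref{conc}) this gives another factor $C\ln n/\sqrt{k}$.
\end{itemize}
The windows must be taken in the correct order for independence. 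We use the reversed-walk event for the last $k/2$ steps, which are independent of the first $k/2$ steps, and the concentration window for the first $k/2$ steps, conditioned on the last ones. The product is $C\ln^2 n/k\le C'\ln^2 n/n$.

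Multiplying the two factors and summing over $i=1,2$ gives (\ref{Kn_uneq}) for large $n$.

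The main obstacle is the uniform-in-$\ov{w}$ estimate $\pr(\cdots)\le C\ln^2 n/n$. One must check that the forward concentration factor and the reversed-walk factor really come from independent blocks of increments, and that Proposition \ref{lnn} applies to the reversed block with $\alpha\ln n$ in place of $\alpha\ln(k/2)$. This is harmless, since $\ln n\sim\ln k$ and $\alpha$ may be enlarged. If that decoupling turns out to be delicate, the fallback is to split $[m,n]$ into three thirds: the middle third carries the concentration estimate (\ref{conc}), and the last third, reversed, carries Proposition \ref{lnn}.
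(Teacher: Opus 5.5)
Your proposal is correct and follows essentially the paper's proof: both split into the two strips and use three time blocks. The first block carries the cone decay via (\ref{u}), the middle block carries the concentration window (\ref{conc}) of length $\alpha\ln n$, and the last block, time-reversed, carries Proposition \ref{lnn}. The paper takes thirds $[0,n/3]$, $[n/3,2n/3]$, $[2n/3,n]$ directly (your fallback), while your corrected ordering (reversed walk on the last quarter, window on the preceding quarter conditioned on it) is the same argument with block lengths $n/2,\,n/4,\,n/4$.
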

\begin{proof}
Let us denote $A_1 =\{\ov{z} \in K: z_1 < \alpha \ln n\}$, $A_2 =\{\ov{z} \in K: z_2 < \alpha \ln n\}$. We fix $\ov{x} \in K$. It is obvious that
$$\pr\left(\tau(\ov{x})> n, \ov{S}_n +\ov{x} \in K \backslash K_{n, \alpha}\right) \le \pr\left(\tau(\ov{x})> n, \ov{S}_n +\ov{x} \in A_1\right) + \pr\left(\tau(\ov{x})> n, \ov{S}_n +\ov{x} \in A_2\right).$$
Let us denote $V:= (-\infty, \alpha \ln n) \times \mathbb{R}$. Note that
\begin{align}
&\pr\left(\tau(\ov{x})> n, \ov{S}_n +\ov{x} \in A_1\right) = \nonumber
\\& = \int\limits_{K} \int\limits_{V} \pr\left(\tau(\ov{x})> n, \ov{S}_n +\ov{x} \in A_1, \ov{S}_{n/3} + \ov{x} \in d\ov{u}, \ov{S}_{n} - \ov{S}_{2n/3} \in d\ov{v}\right) \le \nonumber
\\&\le \int\limits_{K} \int\limits_{-\infty}^{\alpha \ln n} \pr\left(\tau(\ov{x})> n/3, \ov{S}_{n/3} + \ov{x} \in d\ov{u}\right) \pr \left(S_{n/3}^{(1)} \in \left(-v_1 - u_1, -v_1 - u_1 + \alpha \ln n\right)\right) \times \nonumber
\\&\;\;\;\;\;\;\;\;\;\;\;\;\;\;\;\;\;\;\;\;\;\;\;\;\;\;\;\;\;\;\;\;\;\;\;\;\;\;\;\;\;\;\;\; \times \pr \left(S_j^{(1)} > v_1 - \alpha \ln n, j \le n/3, S_{n/3}^{(1)} \in dv_1\right) := I. \label{Idef}
\end{align}
By inequality (\ref{conc}) we have
$$\pr \left(S_{n/3}^{(1)} \in \left(-v_1 - u_1, -v_1 - u_1 + \alpha \ln n\right)\right) \le \frac{C_1 \alpha \ln n}{\sqrt{n}},$$
where $C_1$ does not depend on $u_1$ and $v_1$. Hence, it follows that
\begin{equation}
\label{IEst}
I \le \frac{C_1 \alpha \ln n}{\sqrt{n}} \pr \left( \tau(\ov{x})> n/3 \right) \pr \left(S_j^{(1)} > S_{n/3}^{(1)} - \alpha \ln n, j \le n/3\right).
\end{equation}
Let us denote
$$\widetilde{S}_{j} = S_{n/3}^{(1)} - S_{n/3 - j}^{(1)}, \quad j = 0, 1, \ldots, n/3.$$
By Proposition \ref{lnn}, we have the estimation
\begin{equation}
\label{Stilde}
\pr \left(S_j^{(1)} > S_{n/3}^{(1)} - \alpha \ln n, j \le n/3\right) = \pr \left( \widetilde{S}_j < \alpha \ln n, j \le n/3 \right) \le \frac{C_2 \alpha \ln n}{\sqrt n}.
\end{equation}
Taking into account inequality (\ref{u}), combining relations (\ref{Idef}), (\ref{IEst}), (\ref{Stilde}), we get
$$\pr\left(\tau(\ov{x})> n, \ov{S}_n +\ov{x} \in A_1\right) \le \frac{C \ln^2n}{n^{p/2 + 1}} u\left(\ov{x}_0 + M \ov{x}\right).$$
By the same reasoning for the set $A_2$, we obtain the desired result.
\end{proof}
\begin{lemma}
\label{dimGr}
Let condition (C) be valid for a two-dimensional random walk $\left\{ \ov{S}_n\right\}$, the correlation coefficient of components of the random walk increments $\rho \in (0, 1)$. Let also $D \subset K$ be a measurable bounded set such that $\mathrm{dist}\left(D, \partial K\right) > 0$. Then for any $\alpha > 0$ and any $\ov{x} \in \mathcal{X}_V$ the equality
$$\lim_{T \to \infty} \pr_{\bfx}^{+}\left(\ov{S}_T + \ov{x} \in \sqrt{T}D, \exists j > T: \ov{S}_j + \ov{x} \in K \backslash K_{j, \alpha} \right) = 0$$
holds, where $K_{j, \alpha}$ for $j \in \mathbb{N}$ is defined by (\ref{Kdef}).
\end{lemma}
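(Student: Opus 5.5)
We reduce the event to a countable union over times $j>T$ and bound each term under $\pr_{\bfx}^{+}$ by the preceding Proposition, i.e.\ inequality (\ref{Kn_uneq}). Since $\pr_{\bfx}^{+}$ restricted to $\mathcal{F}_j$ has density $V(M(\ov{x}+\ov{S}_j))I\{\tau(\ov{x})>j\}/V(M\ov{x})$ with respect to $\pr$, for every $j$ we can write
$$\pr_{\bfx}^{+}\left(\ov{S}_j+\ov{x}\in K\backslash K_{j,\alpha}\right)=\frac{1}{V(M\ov{x})}\ex\, V\left(M(\ov{x}+\ov{S}_j)\right) I\left\{\tau(\ov{x})>j,\ \ov{S}_j+\ov{x}\in K\backslash K_{j,\alpha}\right\}.$$
The union bound then gives
$$\pr_{\bfx}^{+}\left(\ov{S}_T+\ov{x}\in\sqrt{T}D,\ \exists j>T:\ov{S}_j+\ov{x}\in K\backslash K_{j,\alpha}\right)\le\sum_{j>T}\pr_{\bfx}^{+}\left(\ov{S}_j+\ov{x}\in K\backslash K_{j,\alpha}\right).$$
(The restriction to $\sqrt{T}D$ is dropped, so the hypotheses on $D$ are not needed.) It therefore suffices to show that this series converges; its tail then tends to $0$ as $T\to\infty$.

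To bound each summand we use (\ref{VU_un_eq}) and (\ref{simpU}), which give $V(M\ov{y})\le C|\ov{y}|^p+C$. On the event in question we split $|\ov{x}+\ov{S}_j|$ according to whether it is at most $j^{1/2}\ln j$ or larger.

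\textbf{Bounded part.} When $|\ov{x}+\ov{S}_j|\le j^{1/2}\ln j$, we have $V\le Cj^{p/2}\ln^p j$. Multiplying by (\ref{Kn_uneq}) gives the bound
$$C\,\frac{\ln^{2+p}j}{j^{p/2+1}}\;j^{p/2}\,u(\ov{x}_0+M\ov{x})=C\,\frac{\ln^{2+p}j}{j}\,u(\ov{x}_0+M\ov{x}).$$
This is not summable, so the crude bound must be sharpened by a power of $j$. Two refinements are available:
\begin{itemize}
\item Rerun the proof of (\ref{Kn_uneq}) directly with the weight $V$ included. Near the boundary $V(M\ov{y})\le C|\ov{y}|^{p-1}\mathrm{dist}(\ov{y},\partial K)$ by (\ref{ucUP}) and (\ref{VU_un_eq}), and on $K\backslash K_{j,\alpha}$ this is at most $C|\ov{y}|^{p-1}\alpha\ln j$. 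This gains a factor $j^{-1/2}$ up to logarithms, so the bounded part becomes of order $j^{-3/2}\ln^{C}j$, which is summable.
\item Alternatively, split by the second coordinate. Where, say, $y_1<\alpha\ln j$, $V$ is bounded by $C|\ov{y}|^{p-1}\ln j$, which yields the same improvement.
\end{itemize}

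\textbf{Large part.} When $|\ov{x}+\ov{S}_j|> j^{1/2}\ln j$, we bound $\ex|\ov{x}+\ov{S}_j|^pI\{\ldots\}$ by Hölder's inequality together with the moment bound $\ex|\ov{S}_j|^2=O(j)$. Using $p<2$, this gives a contribution of order $j^{p/2}\ln^{p-2}j\cdot\pr(\tau(\ov{x})>j)^{\cdot}$, combined with the $\ln^2 j$ tail. We expect this term also to carry an extra $j^{-1/2}$ when combined with the boundary condition, though this may need a Chebyshev-type estimate conditioned on the first $j/3$ steps, as in the proof of (\ref{Kn_uneq}).

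The main obstacle is obtaining a summable bound of order $j^{-1-\epsilon}$ for the $\pr_{\bfx}^{+}$-probability of being within $\alpha\ln j$ of the boundary at time $j$. If summability over all $j>T$ fails, the fallback is to use the hypothesis that $\ov{S}_T+\ov{x}\in\sqrt{T}D$. Combined with Lemma \ref{lemmaU}, the walk then stays at distance at least $T^{1/2-s}$ from $\partial K$ for all $j>T$. Since $\alpha\ln j$ grows only logarithmically, this can fail only for $j\ge \exp(T^{1/2-s}/\alpha)$. The remaining sum then runs over these super-exponentially large $j$, and there even the crude logarithmic bound $\sum_{j\ge J}\ln^{C}j/j^{1+\epsilon}$ from the refined estimate tends to $0$.
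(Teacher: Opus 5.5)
\paragraph{Verdict: genuine gap in the large-$|\ov{x}+\ov{S}_j|$ part.}
Your skeleton is sound and is, in substance, the paper's. The union bound $\sum_{j>T}\pr_{\bfx}^{+}(\ov{S}_j+\ov{x}\in K\backslash K_{j,\alpha})$, written through the density representation, is the $n\to\infty$ version of the paper's sum of $\ex\, m_{n-j}(\ov{S}_j+\ov{x})I\{\ldots\}$. Your weight $V(M\ov{y})\le C(1+|\ov{y}|)^{p-1}(1+\alpha\ln j)$ on $K\backslash K_{j,\alpha}$, obtained from (\ref{VU_un_eq}) and (\ref{ucUP}), is the same estimate the paper applies to $m_{n-j}$ in (\ref{lemma4mn}). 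Your bounded part, $O(j^{-3/2}\ln^{p+2}j)$, is correct, and dropping $\sqrt{T}D$ is harmless once summability is known.

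The gap is the part $|\ov{x}+\ov{S}_j|>j^{1/2}\ln j$. It is never actually estimated, and the tools you name do not give a summable bound.
\begin{itemize}
\item As written (weight $|\ov{y}|^p$, $\ex|\ov{S}_j|^2=O(j)$) you get $j^{p/2}\ln^{p-2}j$, as you note yourself.
\item Use the refined weight $|\ov{y}|^{p-1}\ln j$ and the best available second moment $Cj^{1-p/2}$ (see below). Truncation at $R_j=j^{1/2}\ln j$ then only gives $\ln j\cdot R_j^{p-3}\cdot j^{1-p/2}=j^{-1/2}\ln^{p-2}j$, which is not summable.
\item Hölder with the unconditioned bound $O(j)$ gives the exponent $(p^2+p-8)/4$. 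This is below $-1$ only when $p^2+p<4$, so not for all $\rho\in(0,1)$.
\end{itemize}
The promised ``extra $j^{-1/2}$'' is never produced. The fallback does not help either. Restricting to $j\ge \exp(T^{1/2-s}/\alpha)$ still requires the tail of the same series to vanish. That presupposes exactly the $j^{-1-\epsilon}$ bound that is missing, since a divergent series has infinite tails however large the cutoff is.

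\paragraph{The missing ingredient.}
You need the survival-weighted second moment
\begin{equation*}
\ex\left|\ov{x}+\ov{S}_j\right|^2 I\{\tau(\ov{x})>j\}\le \ex\left|\ov{x}+\ov{S}_{\tau(\ov{x})\wedge j}\right|^2=|\ov{x}|^2+\mu\,\ex\left[\tau(\ov{x})\wedge j\right]\le Cj^{1-p/2},
\end{equation*}
which follows from optional stopping (\ref{stop}) and Proposition \ref{prop_un_eq}. It is much smaller than $O(j)$ because survival costs a factor $j^{-p/2}$.

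The paper combines this bound with the weight $|\ov{y}|^{p-1}$ and moves the split to $j^h$, where $\frac{2-p/2}{3-p}<h<\frac{p/2}{p-1}$; this range is nonempty precisely because $p<2$.
\begin{itemize}
\item Below $j^h$, (\ref{Kn_uneq}) gives $j^{h(p-1)}\cdot C\ln^2 j\,j^{-p/2-1}$.
\item Above $j^h$, the moment bound gives $j^{h(p-3)}\cdot Cj^{1-p/2}$.
\end{itemize}
Both are $O(j^{-1-\delta})$ after multiplying by $\ln j$.

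Alternatively, with no split at all, Hölder with exponents $2/(p-1)$ and $2/(3-p)$ combined with (\ref{Kn_uneq}) gives
\begin{equation*}
\ex|\ov{x}+\ov{S}_j|^{p-1}I\{\tau(\ov{x})>j,\ \ov{S}_j+\ov{x}\in K\backslash K_{j,\alpha}\}\le C j^{p/2-2}\ln^{3-p}j,
\end{equation*}
which is summable since $p<2$.

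With either fix inserted, your direct union bound under $\pr_{\bfx}^{+}$ goes through. It is then slightly cleaner than the paper's route through Theorem \ref{th_plus}, Lemma \ref{lemmaU} and the estimate of $n^{p/2}\sum_j j^{-1-\delta}(n-j)^{-p/2}$.
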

\begin{proof}
We fix $\ov{x} \in \mathcal{X}_V$. For any $s \in (0, 1/2)$ we have
\begin{align*}
&\pr_{\bfx}^{+}\left(\ov{S}_T + \ov{x} \in \sqrt{T}D, \exists j > T: \ov{S}_j + \ov{x} \in K \backslash K_{j, \alpha} \right) \le
\\ &\le \pr_{\bfx}^{+}\left(\ov{S}_T + \ov{x} \in \sqrt{T}D, \exists j > T: \ov{S}_j + \ov{x} \in K \backslash K_{j, \alpha} \cap U_{T, s}\right) + \pr_{\bfx}^{+}\left(\ov{S}_T + \ov{x} \in \sqrt{T}D, \exists j > T: \ov{S}_j + \ov{x} \notin U_{T, s}\right),
\end{align*}
where $U_{T, s}$ is defined by (\ref{utsDef}). Hence, using Lemma \ref{lemmaU}, we get
\begin{align*}
&\varlimsup_{T \to \infty}\pr_{\bfx}^{+}\left(\ov{S}_T + \ov{x} \in \sqrt{T}D, \exists j > T: \ov{S}_j + \ov{x} \in K \backslash K_{j, \alpha} \right) \le
\\ &\le \varlimsup_{T \to \infty}\pr_{\bfx}^{+}\left(\ov{S}_T + \ov{x} \in \sqrt{T}D, \exists j > T: \ov{S}_j + \ov{x} \in K \backslash K_{j, \alpha} \cap U_{T, s}\right).
\end{align*}
By Theorem \ref{th_plus} the expression
\begin{align*} &\pr_{\bfx}^{+}\left(\ov{S}_T + \ov{x} \in \sqrt{T}D, \exists j > T: \ov{S}_j + \ov{x} \in K \backslash K_{j, \alpha} \cap U_{T, s} \right) =
\\ &= \lim_{n \to \infty} \left. \pr \left( \ov{S}_T + \ov{x} \in \sqrt{T}D,\, \exists j \in (T, n]: \ov{S}_j + \ov{x} \in K \backslash K_{j, \alpha} \cap U_{T, s}\right| \tau(\ov{x}) > n\right)
\end{align*}
takes place. Thus, if we establish the relation
$$\lim_{T \to \infty} \lim_{n \to \infty} n^{p/2} \pr \left( \tau(\ov{x}) > n, \, \ov{S}_T + \ov{x} \in \sqrt{T}D,\, \exists j \in (T, n]: \ov{S}_j + \ov{x} \in K \backslash K_{j, \alpha}\cap U_{T, s}\right) = 0,$$
then Lemma \ref{dimGr} will be proved. We fix natural numbers $n$ and $T$, $n > T$.
It is obvious that
\begin{align}
&\pr \left( \tau(\ov{x}) > n, \, \ov{S}_T + \ov{x} \in \sqrt{T}D,\, \exists j \in (T, n]: \ov{S}_j + \ov{x} \in K \backslash K_{j, \alpha} \cap U_{T, s}\right) \le \nonumber
\\&\le \sum_{j = T} ^{n - 1} \pr \left( \tau(\ov{x}) > n, \, \ov{S}_T + \ov{x} \in \sqrt{T}D,\, \ov{S}_j + \ov{x} \in K \backslash K_{j, \alpha} \cap U_{T, s}\right) + \pr\left(\tau(\ov{x})> n, \ov{S}_n +\ov{x} \in K \backslash K_{n, \alpha}\right). \label{lemma4union}
\end{align}
We fix natural $j \in [T, n-1]$. By the properties of conditional expectation, the equality
\begin{align}
&\pr \left( \tau(\ov{x}) > n, \, \ov{S}_T + \ov{x} \in \sqrt{T}D,\, \ov{S}_j + \ov{x} \in K \backslash K_{j, \alpha} \cap U_{T, s}\right) =\nonumber
\\&= \ex m_{n - j} \left(\ov{S}_j + \ov{x}\right) I\left\{ \tau(\ov{x}) > j, \, \ov{S}_T + \ov{x} \in \sqrt{T}D,\, \ov{S}_j + \ov{x} \in K \backslash K_{j, \alpha} \cap U_{T, s} \right\} \label{lemma4KnDef}
\end{align}
holds. Combining inequalities (\ref{ucUP}) and (\ref{u}), we get the estimation
\begin{equation}
m_{n - j} \left(\ov{S}_j + \ov{x}\right) \le \frac{C_0}{(n - j)^{p/2}} \left|\ov{x}_0 + M\left(\ov{S}_j + \ov{x}\right)\right|^{p-1} \mathrm{dist}\left(\ov{x}_0 + M\left(\ov{S}_j + \ov{x}\right), \partial MK\right). \label{lemma4mn}
\end{equation}
Note that there exist positive parameters $C_1$ and $C_2$ such that for all sufficiently large natural $T$ on the event
$\left\{\ov{S}_j + \ov{x} \in K \backslash K_{j, \alpha} \cap U_{T, s}\right\}$ for $j \ge T$ inequalities
\begin{align*}
&\left|\ov{x}_0 + M\left(\ov{S}_j + \ov{x}\right)\right|^{p-1} \le C_1 \left|\ov{S}_j +\ov{x}\right|^{p-1},
\\& \mathrm{dist}\left(\ov{x}_0 + M\left(\ov{S}_j + \ov{x}\right), \partial MK\right) \le C_2 \mathrm{dist}\left(\ov{S}_j + \ov{x}, \partial K\right) \le \alpha C_2 \ln j
\end{align*}
are valid. Hence, from (\ref{lemma4KnDef}), taking into account (\ref{lemma4mn}), we establish the following relation
\begin{align}
&\pr \left( \tau(\ov{x}) > n, \, \ov{S}_T + \ov{x} \in \sqrt{T}D,\, \ov{S}_j + \ov{x} \in K \backslash K_{j, \alpha} \cap U_{T, s}\right) \le \nonumber
\\&\le \frac{C \ln j}{(n - j)^{p/2}} \ex \left|\ov{S}_j + \ov{x}\right|^{p-1} I\left\{ \tau(\ov{x}) > j, \, \ov{S}_T + \ov{x} \in \sqrt{T}D,\, \ov{S}_j + \ov{x} \in K \backslash K_{j, \alpha} \cap U_{T, s} \right\} \le \nonumber
\\&\le \frac{C \ln j}{(n - j)^{p/2}} \ex \left|\ov{S}_j + \ov{x}\right|^{p-1} I\left\{ \tau(\ov{x}) > j, \, \ov{S}_j + \ov{x} \in K \backslash K_{j, \alpha} \cap U_{T, s} \right\} =: \frac{C \ln j}{(n - j)^{p/2}} E_j. \label{Ejdef}
\end{align}
\ab We fix real positive number $h$ such that
\begin{equation}
\label{ldef}
\frac{2 - p/2}{3 - p} < h < \frac{p/2}{p - 1}.
\end{equation}
It is possible because of $p$ belongs to the interval $(1, 2)$. So, we have
\begin{align*}
E_j &= \ex \left|\ov{S}_j + \ov{x}\right|^{p-1} I\left\{ \tau(\ov{x}) > j, \, \ov{S}_j + \ov{x} \in K \backslash K_{j, \alpha} \cap U_{T, s}, \left|\ov{S}_j + \ov{x}\right| < j^{h}\right\} +
\\ &+\ex \left|\ov{S}_j + \ov{x}\right|^{p-1} I\left\{ \tau(\ov{x}) > j, \, \ov{S}_j + \ov{x} \in K \backslash K_{j, \alpha} \cap U_{T, s}, \left|\ov{S}_j + \ov{x}\right| \ge j^{h}\right\} =: E_{j, 1} + E_{j, 2}.
\end{align*}
By inequality (\ref{Kn_uneq}), the inequality
\begin{align*}
E_{j, 1} &\le j^{h(p - 1)} \pr\left(\tau(\ov{x})> j, \ov{S}_j +\ov{x} \in K \backslash K_{j, \alpha}\right) \le C_3 (\ln j)^{2} \cdot j ^{h(p-1) - p/2 - 1} u\left(\ov{x}_0 + M\ov{x}\right)
\end{align*}
takes place. From the second inequality in (\ref{ldef}), it follows that $h(p-1)-p/2 -1 < -1$. Thus, there exists a positive number $\delta_1$ such that for all sufficiently large $j$ the following estimation holds:
\begin{equation}
\ln j \cdot E_{j, 1}\le \frac{C_4 u\left(\ov{x}_0 + M\ov{x}\right)}{j^{1 + \delta_1}}. \label{Ej1}
\end{equation}
Also the inequalities
\begin{align}
E_{j, 2} &\le j^{h(p - 3)} \ex \left|\ov{S}_j + \ov{x} \right|^{2} I\left\{ \tau(\ov{x}) > j, \, \ov{S}_j + \ov{x} \in K \backslash K_{j, \alpha} \cap U_{T, s}, \left|\ov{S}_j + \ov{x}\right| \ge j^{h}\right\} \nonumber
\\&\le j^{h(p - 3)} \ex \left|\ov{S}_j + \ov{x} \right|^{2} I\left\{ \tau(\ov{x}) > j\right\} \le j^{h(p - 3)} \ex \left|\ov{x} + \ov{S}_{\tau(\ov{x}) \wedge j}\right|^2 \label{E21}
\end{align}
take place. Given $\ov{x} \in K$, from (\ref{eq_for_exp}) and (\ref{stop}), for all $j$ we have
\begin{align*}
\ex \left|\ov{x} + \ov{S}_{\tau(\ov{x}) \wedge j}\right|^2 &= \left|\ov{x}\right|^2 + \ex \left|\ov{X}\right|^2 \ex \left[\tau(\ov{x})\wedge j\right] \le \left|\ov{x}\right|^2 + C_5 j^{1 - p/2} u\left(\ov{x}_0 + M \ov{x}\right) \le \nonumber
\\ &\le C_6 j^{1 -p/2} u\left(\ov{x}_0 + M\ov{x}\right). 
\end{align*}
Hence, from inequality (\ref{E21}), for all sufficiently large $j$
$$E_{j, 2} \le C_7 j^{h(p-3) + 1 - p/2} u\left(\ov{x}_0 + M\ov{x}\right).$$
Taking into account the inequality $h(p-3) + 1 - p/2 < -1$, we get that there exists a positive number $\delta_2$ such that
\begin{equation}
\ln j \cdot E_{j, 2}\le \frac{C_8u\left(\ov{x}_0 + M \ov{x}\right) }{j^{1 + \delta_2}} \label{Ej2}
\end{equation}
for all sufficiently large $j$. Let us denote $\delta: = \min\{\delta_1, \delta_2\} > 0$. From inequalities (\ref{Ej1}) and (\ref{Ej2}), for all sufficiently large $j$ we have
\begin{equation}
\ln j \cdot E_j \le \frac{C(\ov{x})}{j^{1 + \delta}}. \label{Ej}
\end{equation}
If $T$ has been chosen sufficiently large, then for all natural $j \in [T, n-1]$ from relations (\ref{Ejdef}) and (\ref{Ej}), we have the inequality
\begin{equation*}
\pr \left( \tau(\ov{x}) > n, \, \ov{S}_T + \ov{x} \in \sqrt{T}D,\, \ov{S}_j + \ov{x} \in K \backslash K_{j, \alpha} \cap U_{T, s}\right) \le \frac{\widetilde{C}(\ov{x})}{j^{1 + \delta} (n - j)^{p/2}}, \label{lemma4main}
\end{equation*}
where $\widetilde{C}(\ov{x})$ does not depend on both $n$ and $j$. Hence,
\begin{align*}
&\varlimsup_{T \to \infty} \lim_{n \to \infty} n^{p/2} \pr \left( \tau(\ov{x}) > n, \, \ov{S}_T + \ov{x} \in \sqrt{T}D,\, \exists j \in (T, n]: \ov{S}_j + \ov{x} \in K \backslash K_{j, \alpha}\right) \le
\\&\le \varlimsup_{T \to \infty} \varlimsup_{n \to \infty} n^{p/2} \widetilde{C}(\ov{x}) \sum_{j = T}^{n - 1} \frac{1}{j^{1 + \delta}(n - j)^{p/2}} + \varlimsup_{T \to \infty} \varlimsup_{n \to \infty} n^{p/2} \pr\left(\tau(\ov{x})> n, \ov{S}_n +\ov{x} \in K \backslash K_{n, \alpha}\right).
\end{align*}
By inequality (\ref{Kn_uneq}), we have
$$\lim_{n \to \infty} n^{p/2} \pr\left(\tau(\ov{x})> n, \ov{S}_n +\ov{x} \in K \backslash K_{n, \alpha}\right) = 0.$$
Note that
\begin{align}
&n^{p/2} \sum_{j = T}^{n/2 - 1} \frac{1}{j ^{1 + \delta}(n - j)^{p/2}} \le 2^{p/2} \sum_{j = T}^{n/2 -1} \frac{1}{j ^{1 + \delta}} \le 2^{p/2} \sum_{j = T}^{+\infty} \frac{1}{j ^{1 + \delta}}; \label{lemma4first}
\\&n^{p/2} \sum_{j = n/2}^{n-1} \frac{1}{j ^{1 + \delta}(n - j)^{p/2}} \le 2^{1 + \delta} n^{p/2 - 1 - \delta}
\sum_{j = 1}^{n/2} \frac{1}{j^{p/2}} \le C n^{p/2 - 1 - \delta} n^{1 - p/2} \le \frac{C}{n^{\delta}}. \label{lemma4second}
\end{align}
Let us remark that the right-hand side of inequality (\ref{lemma4first}) tends to zero as $T \to \infty$, at the same time the right-hand side of inequality (\ref{lemma4second}) tends to zero as $n \to \infty$. Thus, Lemma \ref{dimGr} is proved.
\end{proof}
\begin{cons}
\label{granitsa_pr_plus}
Let condition (C) be valid for a two-dimensional random walk $\left\{ \ov{S}_n\right\}$, the correlation coefficient of components of the random walk increments $\rho \in (0, 1)$. Then for any $\ov{x} \in \mathcal{X}_V$ and any $\alpha > 0$ the equality
$$\lim_{T \to \infty} \pr_{\bfx}^{+}\left(\exists j > T: \ov{S}_j + \ov{x} \in K \backslash K_{j, \alpha}\right) = 0$$
holds, where
$$K_{j, \alpha} = \left\{\ov{x} \in K: \mathrm{dist}\left(\ov{x}, \partial K\right) > \alpha \ln j\right\}.$$
\end{cons}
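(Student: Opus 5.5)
The corollary follows by combining Lemma \ref{comp} with Lemma \ref{dimGr}. Fix $\ov{x} \in \mathcal{X}_V$, $\alpha > 0$ and $\varepsilon > 0$. By Lemma \ref{comp} there is a square $D = [\delta, A]\times[\delta, A]$, with $\delta > 0$, such that
$$\varlimsup_{T \to \infty} \pr_{\bfx}^{+}\left(\ov{S}_T + \ov{x} \notin \sqrt{T}D\right) \le \varepsilon \frac{u\left(\ov{x}_0 + M\ov{x}\right)}{V\left(M\ov{x}\right)}.$$
This set $D$ is measurable and bounded, and $\mathrm{dist}(D, \partial K) = \delta > 0$. So $D$ meets the hypotheses of Lemma \ref{dimGr}.

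For every natural $T$ we split the event according to the position of the walk at time $T$:
\begin{align*}
\pr_{\bfx}^{+}\left(\exists j > T: \ov{S}_j + \ov{x} \in K \backslash K_{j, \alpha}\right) &\le \pr_{\bfx}^{+}\left(\ov{S}_T + \ov{x} \in \sqrt{T}D,\, \exists j > T: \ov{S}_j + \ov{x} \in K \backslash K_{j, \alpha}\right)
\\ &+ \pr_{\bfx}^{+}\left(\ov{S}_T + \ov{x} \notin \sqrt{T}D\right).
\end{align*}
We take the upper limit as $T \to \infty$. The first term tends to zero by Lemma \ref{dimGr}, applied with this $D$ and $\alpha$. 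The second term has upper limit at most $\varepsilon\, u(\ov{x}_0 + M\ov{x})/V(M\ov{x})$ by Lemma \ref{comp}.

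This ratio is a finite constant that depends only on $\ov{x}$: $V(M\ov{x}) > 0$ because $\ov{x} \in \mathcal{X}_V$, and $u$ is finite. Hence
$$\varlimsup_{T\to\infty} \pr_{\bfx}^{+}\left(\exists j > T: \ov{S}_j + \ov{x} \in K \backslash K_{j, \alpha}\right) \le \varepsilon\, \frac{u(\ov{x}_0 + M\ov{x})}{V(M\ov{x})}.$$
The probability is nonnegative and $\varepsilon$ is arbitrary, so the limit exists and equals zero.

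The substantive work is already done in Lemmas \ref{comp} and \ref{dimGr}. The only point that needs care is the order of choices: $D$ must be fixed from $\varepsilon$ before applying Lemma \ref{dimGr}. This is legitimate because Lemma \ref{dimGr} holds for any fixed bounded $D$ at positive distance from $\partial K$, and the bound from Lemma \ref{comp} does not depend on $T$.
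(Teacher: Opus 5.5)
Your proposal is correct and is essentially the paper's proof. You fix $\varepsilon$ and take the square $D$ from Lemma \ref{comp}. You then split on whether $\ov{S}_T + \ov{x} \in \sqrt{T}D$, send the first piece to zero by Lemma \ref{dimGr}, bound the second by Lemma \ref{comp}, and let $\varepsilon \downarrow 0$; the only difference is that you write out explicitly the union-bound decomposition that the paper leaves implicit.
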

\begin{proof}
We fix $\varepsilon > 0$ and a vector $\ov{x} \in \mathcal{X}_V$. We find a square $D$ such that inequality (\ref{not_in_sq_p_plus}) holds. By Lemma \ref{dimGr}, we have
\begin{align*}
\varlimsup_{T \to \infty} \pr_{\bfx}^{+}\left(\exists j > T: \ov{S}_j + \ov{x} \in K \backslash K_{j, \alpha}\right) &\le \varlimsup_{T \to \infty} \pr_{\bfx}^{+}\left( \ov{S}_T + \ov{x} \notin \sqrt{T}D\right) \le \varepsilon \frac{u\left(\ov{x}_0 + M\ov{x}\right)}{V\left(M\ov{x}\right)}.
\end{align*}
Tending $\varepsilon \downarrow 0$, we have
$$\lim_{T \to \infty} \pr_{\bfx}^{+}\left(\exists j > T: \ov{S}_j + \ov{x} \in K \backslash K_{j, \alpha}\right) = 0.$$
Corollary \ref{granitsa_pr_plus} is proved.
\end{proof}
\begin{cons}
\label{eSn_to_zero_plus}
Let condition (C) be valid for a two-dimensional random walk $\left\{ \ov{S}_n\right\}$, the correlation coefficient of components of the random walk increments $\rho \in (0, 1)$. Then for any $\ov{x} \in \mathcal{X}_V$ the relations
$$\lim_{n \to \infty} S_n^{(i)} = +\infty, \quad \pr_{\bfx}^{+}-\text{a.s.}, i = 1, 2$$
hold.
\end{cons}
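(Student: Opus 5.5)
The claim follows directly from Corollary \ref{granitsa_pr_plus}. That corollary controls how close the walk can come to the boundary of the quadrant under $\pr_{\bfx}^{+}$, and a lower bound on $\mathrm{dist}(\ov{S}_j + \ov{x}, \partial K)$ is a lower bound on each coordinate. We fix $\ov{x} \in \mathcal{X}_V$ and take $\alpha = 1$.

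First we check that $\pr_{\bfx}^{+}$-a.s. the walk stays in $K$ forever, so that $\ov{S}_j + \ov{x} \in K$ for all $j$. By the definition of $\pr_{\bfx}^{+}$ on $\mathcal{F}_n$, the density vanishes off $\{\tau(\ov{x}) > n\}$. Hence $\pr_{\bfx}^{+}(\tau(\ov{x}) \le n) = 0$ for every $n$, and taking the union over $n$ gives $\pr_{\bfx}^{+}(\tau(\ov{x}) < \infty) = 0$.

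Next we introduce the events
$$B_T := \left\{\exists j > T: \ov{S}_j + \ov{x} \in K \backslash K_{j, 1}\right\}.$$
These are decreasing in $T$. By Corollary \ref{granitsa_pr_plus}, $\pr_{\bfx}^{+}(B_T) \to 0$, so $\pr_{\bfx}^{+}\left(\bigcap_T B_T\right) = 0$. Consequently, $\pr_{\bfx}^{+}$-a.s. there is a random $T_0$ such that $\ov{S}_j + \ov{x} \in K_{j,1}$ for all $j > T_0$.

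For $K = \mathbb{R}^{+} \times \mathbb{R}^{+}$ and $\ov{z} \in K$ we have $\mathrm{dist}(\ov{z}, \partial K) = \min(z_1, z_2)$. Therefore, for $j > T_0$,
$$S_j^{(i)} + x_i > \ln j, \quad i = 1, 2.$$
Since $\ln j \to \infty$, it follows that $S_j^{(i)} \to +\infty$ $\pr_{\bfx}^{+}$-a.s. for $i = 1, 2$.

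The only point needing care is the measure-theoretic step from ``probability tends to zero'' to an almost sure eventual statement. It works because the events $B_T$ are monotone and the measure $\pr_{\bfx}^{+}$ is defined on all of $\mathcal{F}$, so no further estimate is required.
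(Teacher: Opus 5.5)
Your proof is correct and follows the intended route: the paper gives no separate proof of this corollary and treats it as an immediate consequence of Corollary \ref{granitsa_pr_plus}, which is exactly what you use. You also check that $\pr_{\bfx}^{+}(\tau(\ov{x}) < \infty) = 0$; this step is needed, because without it the complement of your $B_T$ would not force $\ov{S}_j + \ov{x} \in K_{j,1}$.
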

\subsection{Proof of Lemma \ref{lemma7}}
\begin{prop}
Let $\left\{\ov{Z}_n\right\}$ be a group of $N$ branching processes in a joint random environment. Then,
\begin{itemize}
\item the following expressions
\begin{equation}
\label{prf_expression}
\prf\left(Z_n^{(i)} > 0 \right) = \left(\exp\left(-S_n^{(i)}\right) + \sum_{k = 0}^{n-1}\exp\left(-S_k^{(i)}\right) g_{k}^{(i)}\left( f_{k+2, n}^{(i)}(0)\right)\right)^{-1}, \quad i = 1, \ldots, N,
\end{equation}
are valid, where
\begin{align*}
&f_{k, n}^{(i)} (t): = f_k^{(i)} \circ f_{k+1}^{(i)} \circ \ldots \circ f_n^{(i)}(t), \quad k = 1, 2, \ldots, n, \quad f_{n+1, n}^{(i)}(t) := t, \quad i = 1, \ldots N,
\\&g_k^{(i)}(t) := \frac{1}{1 - f_{k+1}^{(i)}(t)} - \frac{1}{\left(f_{k+1}^{(i)}\right)'(1) \cdot (1 - t)}, \quad t \in [0, 1), \; i = 1, \ldots, N;
\end{align*}
\item the inequalities
\begin{equation}
\label{prf_un_nogeom}
\prf \left(Z_n^{(i)} > 0\right) \ge \left(\mathrm{exp} \left(-S_n^{(i)}\right) + \sum_{k = 0}^{n-1} \xi_k^{(i)} \mathrm{exp} \left(-S_k^{(i)}\right)\right)^{-1}, \quad i = 1, \ldots, N
\end{equation}
hold.
\end{itemize}
\end{prop}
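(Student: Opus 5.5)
Given the environment $\bol{f}$, the process $\{Z_n^{(i)}\}$ is a Galton--Watson process in a varying environment, so it suffices to fix $i$ and work with a single population. We drop the index $i$ and write $f_k = f_k^{(i)}$, $S_k = S_k^{(i)} = \sum_{j\le k}\ln f_j'(1)$ with $S_0 = 0$. The starting point is
\[
\prf(Z_n>0) = 1 - f_{1,n}(0),
\]
which holds because, for fixed $\bol{f}$, the generating function of $Z_n$ is $f_{1,n}$. Both parts of the statement will follow from a telescoping identity for $1/(1-f_{k+1,n}(0))$.

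For (\ref{prf_expression}) we use the definition of $g_k$ with $t = f_{k+2,n}(0)$. Since $f_{k+1}(f_{k+2,n}(0)) = f_{k+1,n}(0)$, this gives
\[
\frac{1}{1-f_{k+1,n}(0)} = \frac{1}{f_{k+1}'(1)\,\bigl(1-f_{k+2,n}(0)\bigr)} + g_k\bigl(f_{k+2,n}(0)\bigr).
\]
We multiply by $e^{-S_k}$ and note that $e^{-S_k}/f_{k+1}'(1) = e^{-S_{k+1}}$. Setting $a_k := e^{-S_k}/(1-f_{k+1,n}(0))$, the identity becomes $a_k = a_{k+1} + e^{-S_k} g_k(f_{k+2,n}(0))$. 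We sum this over $k=0,\dots,n-1$. The boundary terms are $a_n = e^{-S_n}$, because $f_{n+1,n}(0)=0$, and $a_0 = 1/(1-f_{1,n}(0))$. The sum therefore gives exactly (\ref{prf_expression}). We also check that every $f_{k+2,n}(0)<1$, so that $g_k$ is evaluated inside $[0,1)$. This holds whenever $p_0$ of the relevant generating functions is positive. The degenerate case is handled by the convention that both sides are then consistent; we will remark that if some $f_{k+2,n}(0)=1$, survival is impossible and the formula is read with the convention $1/0=\infty$.

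For (\ref{prf_un_nogeom}) it suffices to show that $0 \le g_k(t) \le \xi_k = f_{k+1}''(1)/(f_{k+1}'(1))^2$ for all $t\in[0,1)$. Substituting this into (\ref{prf_expression}) only increases the bracket, which gives the lower bound on the probability. This two-sided bound is the classical estimate for generating functions found in the one-dimensional literature (e.g.\ \cite{Geiger}, \cite{Vatutin}). We will either cite it or prove it as follows. Write $f=f_{k+1}$ and $m=f'(1)$, and put
\[
\varphi(t) := \frac{1}{1-f(t)} - \frac{1}{m(1-t)} = \frac{m(1-t) - (1-f(t))}{m(1-t)(1-f(t))}.
\]
The numerator equals $\int_t^1 (m - f'(s))\,ds \ge 0$, because $f'$ is nondecreasing. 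Hence $\varphi \ge 0$. For the upper bound, convexity of $f'$ gives $m - f'(s) \le f''(1)(1-s)$, so the numerator is at most $f''(1)(1-t)^2/2$. The denominator is at least $m(1-t)\cdot m(1-t)\cdot c$, where we need $1-f(t) \ge m(1-t)\cdot(\text{something})$.

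Here lies the main obstacle. Since $1-f(t)\le m(1-t)$, the factor $1-f(t)$ can be small, and the naive bound fails. I expect the clean route to be the known lemma $\varphi(t) \le f''(1)/m^2$ (Geiger--Kersting, Lemma 1 in \cite{Geiger}). Its proof compares $\varphi$ with the case where $f$ is linear fractional, for which $\varphi$ is constant and equal to $f''(1)/(2m^2)$. It shows monotonicity of $\varphi$ in $t$ and computes the limit $\varphi(1-) = f''(1)/(2m^2)$ via Taylor expansion. Following that, we take the bound $\varphi(t)\le 2\varphi(1-)=\xi_k$, using the standard argument that $(1-f(t))/(1-t)$ is nonincreasing. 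This yields $1-f(t)\ge$ a multiple of $(1-t)$ comparable on the relevant range. I will cite this lemma rather than reprove it in detail.
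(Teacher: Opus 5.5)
The paper gives no argument here; it only refers to Lemma 19.5 of \cite{Afan}, so your proposal is more explicit than the paper. The telescoping derivation of (\ref{prf_expression}) is correct and is the standard one. Reducing (\ref{prf_un_nogeom}) to the one-generation bound $0\le g_k(t)\le \xi_k$ on $[0,1)$ is the right reduction. Citing that bound, the known estimate $g(t)\le 2g(1-)=f''(1)/m^2$, puts your proof at the same level of rigor as the paper's. Two of your side remarks are wrong, though, and the second matters if you want the argument to be self-contained.

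\emph{The degenerate case.} $f_{k+2,n}(0)<1$ does not need $p_0>0$; it needs $p_0<1$, i.e.\ $f_j\not\equiv 1$. This is automatic, since $f_j'(1)>0$ (otherwise $S_j=-\infty$). Then $f_j(t)\le p_0+(1-p_0)t<1$ for $t<1$, so no degenerate case ever arises.

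\emph{The sketch of $g\le\xi$.} With $f=f_{k+1}$, $m=f'(1)$, $g=g_k$, the sketch does not work as written. First, $(1-f(t))/(1-t)$ is a chord slope of the convex function $f$, so it is nondecreasing, not nonincreasing. The bound it actually gives, $1-f(t)\ge(1-p_0)(1-t)$, combined with convexity yields only $g(t)\le f''(1)/(2m(1-p_0))$. This is weaker than $f''(1)/m^2$ whenever $m>2(1-p_0)$. Second, $g$ is not monotone: for $f(s)=0.999s^2+0.001s^{100}$ one gets $g(0)\approx 0.52$, $g(1/2)\approx 0.38$, $g(1-)\approx 1.35$.

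\emph{A short correct proof.} Write $f(s)=\sum_n p_ns^n$ and $E_n(t)=1+t+\dots+t^{n-1}$ with $E_0=0$. Then $Q(t):=(1-f(t))/(1-t)=\sum_n p_nE_n(t)$, $m=\sum_n np_n$, $f''(1)=\sum_n n(n-1)p_n$, and $g(t)=\bigl(m-Q(t)\bigr)/\bigl(m(1-t)Q(t)\bigr)$.

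The bound $g(t)\le f''(1)/m^2$ is equivalent to $\sum_{n,l}p_np_l\,\psi(n,l)\le 0$, where $\psi(n,l)=l(n-E_n)-(1-t)\,l(l-1)E_n$. The elementary inequality $n-E_n\le (1-t)(n-1)E_n$ is equivalent to $\sum_{j=1}^{n-1}t^j\ge (n-1)t^n$. It gives
\[
\psi(n,l)+\psi(l,n)\le (1-t)(n-l)\bigl(lE_n-nE_l\bigr)\le 0 .
\]
The last step holds because $E_n/n$, the average of $1,t,\dots,t^{n-1}$, is nonincreasing in $n$. Symmetrizing the double sum finishes the proof. The lower bound $g\ge 0$ is your integral argument, or simply $n\ge E_n$.
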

\begin{proof}
See Lemma 19.5 in \cite{Afan}.
\end{proof}
In what follows, we agree to write $q$ for the parameter from Theorem \ref{main_nogeom}. In other words,
$$\ex \left(\xi_0^{(1)}\right)^q < \infty, \quad \ex \left(\xi_0^{(2)}\right)^q < \infty.$$
We need the following technical lemma to prove Lemma \ref{lemma7}.
\begin{lemma}
\label{bor_xi}
Under the conditions of Theorem \ref{main_nogeom} for any $\beta > 2/(q (2 - p))$ and any $\ov{x} \in \mathcal{X}_V$
$$\sum_{k = 1}^{\infty} \pr_{\bfx}^{+}\left( \xi_k^{(i)} > k^{\beta}\right) < \infty, \quad i = 1, 2.$$
\end{lemma}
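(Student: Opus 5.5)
The plan is to compute $\pr_{\bfx}^{+}\left(\xi_k^{(i)} > k^{\beta}\right)$ directly from the definition of the measure on $\mathcal{F}_{k+1}$. The key fact is that $\xi_k^{(i)}$ depends only on $f_{k+1}^{(i)}$, so it is independent of $\mathcal{F}_k$ under $\pr$. By definition and harmonicity (\ref{V_harm2}),
\begin{align*}
V(M\ov{x})\,\pr_{\bfx}^{+}\left(\xi_k^{(i)} > k^{\beta}\right) = \ex\, V\left(M\left(\ov{x}+\ov{S}_{k+1}\right)\right) I\{\tau(\ov{x})>k+1\}\, I\{\xi_k^{(i)}>k^{\beta}\}.
\end{align*}
I will bound $V(M\ov{y})\le C u(M\ov{y}+\ov{x}_0)\le C'(1+|\ov{y}|^p)$ using (\ref{VU_un_eq}) and (\ref{simpU}). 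I will also write $\ov{S}_{k+1}=\ov{S}_k+\ov{X}_{k+1}$ and use $|\ov{a}+\ov{b}|^p\le 2^p(|\ov{a}|^p+|\ov{b}|^p)$. This gives
\begin{align*}
V(M\ov{x})\,\pr_{\bfx}^{+}\left(\xi_k^{(i)} > k^{\beta}\right) \le C\,\ex\left(1+|\ov{x}+\ov{S}_k|^p\right) I\{\tau(\ov{x})>k\}\,\pr\left(\xi_k^{(i)}>k^{\beta}\right) + C\,\ex\, |\ov{X}_{k+1}|^p I\{\xi_k^{(i)}>k^{\beta}\}\, I\{\tau(\ov{x})>k\}.
\end{align*}
In the first term independence has been used to factor out $\pr(\xi_k^{(i)}>k^\beta)$.

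For the first term I will use Markov's inequality, $\pr(\xi>k^\beta)\le \ex\xi^q\, k^{-\beta q}$. Then I need a bound on $\ex|\ov{x}+\ov{S}_k|^p I\{\tau(\ov{x})>k\}$. Since $p<2$, Jensen's inequality and (\ref{stop}) with Proposition \ref{prop_un_eq} give
\begin{align*}
\ex|\ov{x}+\ov{S}_k|^pI\{\tau>k\}\le \left(\ex|\ov{x}+\ov{S}_{\tau\wedge k}|^2\right)^{p/2}\le C(\ov{x})\,k^{(1-p/2)p/2}\le C(\ov{x})\, k^{1-p/2}.
\end{align*}
A sharper route is (\ref{V_harm2}) directly, giving $O(1)$ on the region where $V\asymp u$; the crude bound suffices here. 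The first term is then $O(k^{1-p/2-\beta q})$. This is summable if $\beta q>2-p/2$. The hypothesis $\beta>2/(q(2-p))$ gives $\beta q>2/(2-p)>2\ge 2-p/2$, so the condition holds.

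The second term is the delicate one, because $\ov{X}_{k+1}$ and $\xi_k^{(i)}$ are dependent: both come from $f_{k+1}$. I will use that the indicator $I\{\tau(\ov{x})>k\}$ is $\mathcal{F}_k$-measurable, so independence factors it out as $\pr(\tau(\ov{x})>k)\le C k^{-p/2}$. What remains is $\ex|\ov{X}_1|^p I\{\xi_0^{(i)}>k^\beta\}$.

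The main obstacle is bounding $\ex|\ov{X}_1|^pI\{\xi_0>k^\beta\}$. I will apply Hölder's inequality with exponents $2/p$ and $2/(2-p)$, using $\ex|\ov{X}|^2<\infty$:
\begin{align*}
\ex|\ov{X}_1|^pI\{\xi_0>k^\beta\}\le (\ex|\ov{X}_1|^2)^{p/2}\,\pr(\xi_0>k^\beta)^{(2-p)/2}\le C k^{-\beta q(2-p)/2}.
\end{align*}
Multiplying by the factor $k^{-p/2}$ from $\pr(\tau(\ov{x})>k)$, the second term is $O(k^{-p/2-\beta q(2-p)/2})$. The exponent exceeds $1$ because $\beta q(2-p)/2>1$, which is exactly the hypothesis $\beta>2/(q(2-p))$; this explains where the threshold comes from. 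Both terms are summable in $k$, and dividing by $V(M\ov{x})>0$ (valid since $\ov{x}\in\mathcal{X}_V$) proves the lemma for $i=1,2$.
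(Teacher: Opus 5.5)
Your proof is correct, but it separates the new increment $\ov{X}_{k+1}$ from the past differently from the paper.

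The paper argues multiplicatively. Via the monotonicity (\ref{V_mon}), the bound (\ref{VU_un_eq}) and the comparison inequality (\ref{uNPart}), it obtains $V(M(\ov{x}+\ov{S}_{k+1}))\le C\max\bigl(|(M\ov{X}_{k+1})^{+}|^p,1\bigr)\,u\bigl(M(\ov{x}+\ov{S}_k)+\ov{x}_0\bigr)$ and factors by independence. It then bounds the past factor $\ex u(M(\ov{x}+\ov{S}_k)+\ov{x}_0)I\{\tau(\ov{x})>k\}$ by a constant, using the convergence in (\ref{V2}) at the shifted starting point $\ov{x}+M^{-1}\ov{x}_0$. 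Everything thus reduces to the single quantity $\ex\max(|M\ov{X}_1|^p,1)I\{\xi_0>k^{\beta}\}$, which is handled by the same H\"older--Markov step you use.

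Your additive split with the crude bound $V(M\ov{y})\le C(1+|\ov{y}|^p)$ is more elementary. It needs neither monotonicity of $V$ nor (\ref{uNPart}) nor the limit defining $V$. (The paper's use of monotonicity, in the $MK$-coordinates, tacitly requires $M^{-1}$ to have nonnegative entries, which holds here because $\rho>0$.) The cost is that your past term is only bounded by the growing power $k^{(1-p/2)p/2}$, obtained via (\ref{stop}) and Proposition \ref{prop_un_eq}. This is harmless against the Markov factor $k^{-\beta q}$ with $\beta q>2$.

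The additive split also gains something. Your increment term is paired with $\pr(\tau(\ov{x})>k)=O(k^{-p/2})$ rather than with an $O(1)$ factor. As a result, your estimates give summability under a weaker requirement, $\beta q>2-p/2$ (indeed $\beta q>1+(1-p/2)p/2$ with your sharper exponent). This is well below the lemma's threshold $\beta q>2/(2-p)$.
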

\begin{proof}
We fix a vector $\ov{x} \in \mathcal{X}_V$, a natural number $k$, $i \in \left\{1, 2\right\}$. To simplify the notation we omit the superscripts of $\xi_k^{(i)}$. By the definition of $\pr^{+}$-measure, we have
\begin{equation}
\label{series_px_exp}
V\left(M\ov{x}\right) \pr_{\bfx}^{+}\left(\xi_k > k^{\beta}\right) = \ex V\left(M \left(\ov{x} + \ov{S}_{k+1}\right)\right)I\{\tau(\ov{x}) > k+1\}I\{\xi_{k} > k^{\beta}\}.
\end{equation}
Let us denote $\ov{w}^{+} = \left(w_1 I\{w_1 \ge 0\}, w_2 I\{w_2 \ge 0 \}\right)$ for any vector $\ov{w} = (w_1, w_2)$.
From relations (\ref{V_mon}), (\ref{VU_un_eq}), it follows that
\begin{align}
V\left(M\left(\ov{x} + \ov{S}_{k+1}\right)\right) &\le V\left(M(\ov{x} + \ov{S}_k) + \left(M\ov{X}_{k+1}\right)^{+}\right) \le Cu\left(M\left(\ov{x} + \ov{S}_k\right) + \ov{x}_0 + \left(M\ov{X}_{k+1}\right)^{+} \right). \label{Vu_plus}
\end{align}
Note that for any $\ov{z}_1, \ov{z}_2 \in MK$ the inequalities $\dist(\ov{z}_1 + \ov{z}_2, \partial MK) \ge \dist(\ov{z}_i, \partial MK)$, $i = 1, 2$ hold. Thus, if $\ov{x} + \ov{S}_{k} \in K$, then
$$\dist\left(M\left(\ov{x} + \ov{S}_k\right) + \ov{x}_0, \partial MK \right) \ge \dist(\ov{x}_0, \partial MK) > 0.$$
Using inequality (\ref{uNPart}), we get
\begin{align*}
u\left(M\left(\ov{x} + \ov{S}_k\right) + \ov{x}_0 + \left(M\ov{X}_{k+1}\right)^{+} \right) &\le C_1 \max\left(\left|\left(M\ov{X}_{k+1}\right)^{+}\right|^p, 1\right) u\left(M\left(\ov{x} +\ov{S}_{k}\right) + \ov{x}_0 \right).
\end{align*}
Hence, from equality (\ref{series_px_exp}) and inequality (\ref{Vu_plus}), we obtain the following relations:
\begin{align}
&V\left(M\ov{x}\right) \pr_{\bfx}^{+}\left(\xi_k > k^{\beta}\right) \le \nonumber
\\ \le &CC_1 \ex u\left(M\left(\ov{x} +\ov{S}_{k}\right) + \ov{x}_0 \right)I\{\tau(\ov{x}) > k+1\} \max\left(\left|\left(M\ov{X}_{k+1}\right)^{+}\right|^p, 1\right) I\{\xi_k > k^{\beta}\} \le \nonumber
\\ \le& CC_1 \ex u\left(M\left(\ov{x} +\ov{S}_{k}\right) + \ov{x}_0 \right)I\{\tau(\ov{x}) > k\} \cdot \ex \max\left(\left|\left(M\ov{X}_{k+1}\right)^{+}\right|^p, 1\right) I\{\xi_k > k^{\beta}\}. \label{pr_plus_xi_ind}
\end{align}
Moreover,
\begin{align}
&\varlimsup_{k \to \infty} \ex u\left(M\left(\ov{x} +\ov{S}_{k}\right) + \ov{x}_0 \right)I\left\{\tau(\ov{x}) > k\right\} \le \nonumber
\\ &\le \lim_{k \to \infty}\ex u\left(M\left(\ov{x} +\ov{S}_{k}\right) + \ov{x}_0 \right)I\left\{\tau\left(\ov{x} + M^{-1}\ov{x}_0\right) > k\right\} = V(M\ov{x} + \ov{x}_0), \nonumber
\end{align}
where the last equality follows from (\ref{V2}). Hence, we get that there exists a number $C_2$ such that for any natural $k$ the estimation
\begin{equation}
\label{u_high}
\ex u\left(M\left(\ov{x} +\ov{S}_{k}\right) + \ov{x}_0 \right)I\left\{\tau(\ov{x}) > k\right\} \le C_2 V(M\ov{x} + \ov{x}_0)
\end{equation}
takes place. For the second multiplier in the right-hand side of relation (\ref{pr_plus_xi_ind}) we have
$$\ex \max\left(\left|\left(M\ov{X}_{k+1}\right)^{+}\right|^p, 1\right) I\{\xi_k > k^{\beta}\} \le \pr(\xi_k > k^{\beta}) + \ex \left|\left(M\ov{X}_{k+1}\right)^{+}\right|^p I\{\xi_k > k^{\beta}\}.$$
Using Holder's inequality, we get
$$\ex \left|\left(M\ov{X}_{k+1}\right)^{+}\right|^p I\{\xi_k > k^{\beta}\} \le \left(\ex \left|\left(M\ov{X}_{k+1}\right)^{+}\right|^2\right)^{p/2} \cdot \left(\pr\left(\xi_k > k^{\beta}\right)\right)^{(2-p)/2}.$$
It is obvious that
$$\ex \left|\left(M\ov{X}_{k+1}\right)^{+}\right|^2 \le \ex \left|M\ov{X}_{k+1}\right|^2 < \infty.$$
By Markov's inequality, the estimation
$$\pr\left(\xi_k > k^{\beta}\right) \le \frac{\ex \xi^q}{k^{\beta q}}$$
holds. Thus,
\begin{align}
\label{max_exp_xi}
\ex \max\left(\left|\left(M\ov{X}_{k+1}\right)^{+}\right|^p, 1\right) I\{\xi_k > k^{\beta}\} \le \frac{\ex \xi^q}{k^{\beta q}} + C_3 \left( \frac{1}{k^{\beta q}}\right)^{(2-p)/2} \le \frac{C_4}{k^{1 + \varepsilon}}
\end{align}
for some $\varepsilon > 0$. By (\ref{pr_plus_xi_ind}), taking into account relations (\ref{u_high}), (\ref{max_exp_xi}), we may conclude that for any natural $k$ the following estimation is valid:
$$\pr_{\bfx}^{+}\left(\xi_k > k^{\beta}\right) \le CC_1C_2C_4 \frac{V\left(M\ov{x} + \ov{x}_0\right)}{V\left(M\ov{x}\right)} \frac{1}{k^{1 + \varepsilon}} = \frac{C(\ov{x})}{k^{1 + \varepsilon}}.$$
Summation the last relation over $k$ leads us to the statement of Lemma \ref{bor_xi}.
\end{proof}
Finally, we are ready to prove Lemma \ref{lemma7}.
\begin{proof}[Proof of Lemma \ref{lemma7}]
We fix $\ov{x} \in \mathcal{X}_V$. By expression (\ref{prf_expression}), we may conclude that $\prf\left(\ov{Z}_n > 0\right)$ is the $\mathcal{F}_n$-measurable random variable for every natural $n$. It is obvious that the sequence $\left\{\prf\left(\ov{Z}_n > 0 \right)\right\}$ converges $\pr_{\bfx}^{+}$-a.s.. By Theorem \ref{th_plus}, we have
$$r(\ov{x}) := \lim_{n \to \infty} \left.\pr \left( \ov{Z}_n > 0 \right.|\tau(\ov{x}) > n \right) = \lim_{n \to \infty} \ex \left.\left(\prf\left(\ov{Z}_n > 0\right)\right|\tau(\ov{x})> n\right) = \ex_{\bfx}^{+}\lim_{n \to \infty} \prf\left(\ov{Z}_n > 0\right).$$
Our next purpose is to prove that $r(\ov{x}) > 0$. We fix $\beta= 1 + 2 / (q(2 - p))$, $\alpha = 2 + \beta$. Let us denote
\begin{align*}
&A_T := \left\{\xi_k^{(i)} \le u, \exp\left(-S_k^{(i)}\right) \le w, i = 1, 2, k = 0, \ldots, T - 1\right\},
\\&B_T := \left\{ \xi_k^{(i)} \le k^{\beta}, k \ge T, i = 1, 2\right\}, \quad G_T := \left\{\ov{S}_j + \ov{x} \in K_{j, \alpha}, j \ge T \right\},
\\&K_{j, \alpha} := \left\{\ov{z} \in K: \mathrm{dist}\left(\ov{z}, \partial K\right) > \alpha \ln j\right\}
\end{align*}
for any integer non-negative $T$ and any real $u$ and $w$. Note that the following relations
\begin{align}
&\pr_{\bfx}^{+}\left(A_T \cap B_T \cap G_T\right) = \nonumber
\\&= \pr_{\bfx}^{+}\left(A_T\right) - \pr_{\bfx}^{+}\left(A_T \cap \ov{B}_T\right) - \pr_{\bfx}^{+}\left(A_T\cap B_T \cap \ov{G}_T\right) \ge \pr_{\bfx}^{+}(A_T) - \pr_{\bfx}^{+}\left(\ov{B}_T\right) - \pr_{\bfx}^{+}\left(\ov{G}_T\right) \label{down_pr_plus}
\end{align}
take place. We fix $\varepsilon \in (0, 1/3)$. By corollary \ref{granitsa_pr_plus}, we may find a number $T_1$ such that for any $T > T_1$ :
\begin{equation}
\label{GT_plus}
\pr_{\bfx}^{+}\left(\ov{G}_T\right) = \pr_{\bfx}^{+}\left(\exists j \ge T: \ov{S}_j + \ov{t} \in K \backslash K_{j, \alpha}\right)\le \varepsilon.
\end{equation}
By Lemma \ref{bor_xi}
$$\sum_{k} \pr_{\bfx}^{+}\left(\xi_{k}^{(i)} > k^{\beta}\right)< \infty, \quad i = 1, 2.$$
Hence, there exists a number $T_2$ such that for any $T > T_2$ the inequality
\begin{equation}
\label{BT_plus}
\pr_{\bfx}^{+}\left( \ov{B}_T\right) \le \sum_{k = T}^{\infty} \sum_{i = 1}^2 \pr_{\bfx}^{+}\left(\xi_k^{(i)} > k^{\beta}\right) < \varepsilon
\end{equation}
holds. We fix $T > \max \left(T_1, T_2\right)$. We choose $u$ and $w$ such that
\begin{equation}
\label{AT_plus}
\pr_{\bfx}^{+}(A_T) \ge 1 - \varepsilon.
\end{equation}
Finally, from relation (\ref{down_pr_plus}), taking into account inequalities (\ref{GT_plus}), (\ref{BT_plus}), (\ref{AT_plus}), we get the estimation
\begin{equation*}
\label{AT_BT_FT_GT_plus}
\pr_{\bfx}^{+}\left(A_T \cap B_T \cap G_T\right) \ge 1 - 3\varepsilon > 0.
\end{equation*}
\ab By inequality (\ref{prf_un_nogeom}) and corollary \ref{eSn_to_zero_plus}, we have
$$r(\ov{x}) = \ex_{\bfx}^{+} \lim_{n \to \infty} \prf\left(\ov{Z}_n > 0\right) \ge \ex_{\bfx}^{+} \zeta^{(1)}\zeta^{(2)},\quad \zeta^{(i)} := \left(\sum_{k = 0}^{\infty} \xi_k^{(i)} \exp\left( -S_k^{(i)}\right) \right)^{-1}, \quad i = 1, 2.$$
Note that for $i \in \left\{ 1, 2 \right\}$ the following chain of equalities
\begin{align}
\zeta^{(i)}I\left\{A_T \cap B_T \cap G_T\right\} &\ge \left(ulT + \sum_{k = T}^{\infty} \frac{e^{x_i}}{k^{\alpha - \beta}}\right)^{-1}I\left\{A_T \cap B_T \cap G_T\right\} \ge \left(ulT + 2 e^{x_i}\right)^{-1}I\left\{A_T \cap B_T \cap G_T\right\} \nonumber
\end{align}
takes place. Hence,
\begin{align*}
r(\ov{x}) \ge \ex_{\bfx}^{+} \zeta^{(1)}\zeta^{(2)} \ge \frac{1}{\left(ulT + 2 e^{x_1}\right) \left(ulT + 2 e^{x_2}\right)} \pr_{\bfx}^{+}\left(A_T \cap B_T \cap G_T \right) > 0,
\end{align*}
which proves Lemma \ref{lemma7}.
\end{proof}
\section{Acknowledgments}
I take the opportunity to express my profound gratitude to A.V. Shklyaev for discussions and advices on this work and to  Professor V.A. Vatutin for remarks.
\bibliography{nonExtinEng}
\end{document}